 \patchcmd\Gread@eps{\@inputcheck#1 }{\@inputcheck"#1"\relax}{}{}
\newcommand{\f}{\frac}
\newcommand{\pa}{\partial}
\newcommand{\lam}{\lambda}
\newcommand{\rb}{\mathbb{R}}
\newcommand{\alp}{\alpha}
\newcommand{\lap}{\Delta}
\newcommand{\bke}[1]{\left( #1 \right)}
\newcommand{\bkc}[1]{\langle #1 \rangle}
\newcommand{\norm}[1]{\left\Vert #1 \right\Vert}
\newcommand{\abs}[1]{\left| #1 \right|}
\newcommand{\fB}{\mathbf{B}}
\newcommand{\fd}{\mathbf{d}}
\newcommand{\fW}{\mathbf{W}}
\newcommand{\fU}{\mathbf{U}}
\newcommand{\fr}{\mathbf{r}}
\newcommand{\fz}{\mathbf{z}}
\newcommand{\fx}{\mathbf{x}}
\newcommand{\eps}{\varepsilon}
\renewcommand{\vec}[1]{\mathbf{#1}}
\newcommand{\cmmnt}[1]{\ignorespaces}
\theoremstyle{plain}
\newtheorem{theorem}{Theorem}[section]
\newtheorem{lemma}[theorem]{Lemma}
\theoremstyle{remark}
\newtheorem{remark}{Remark}[section]
\newtheorem{example}{Example}[section]
\begin{document}
\title[DSM for imaging small dielectric inhomogeneities: analysis and improvement]{\begin{center}Direct sampling method for imaging small dielectric inhomogeneities: analysis and improvement\end{center}}

\author{\begin{center}Sangwoo Kang$^{1}$, Marc Lambert$^{1}$, and Won-Kwang Park$^{2}$\end{center}}
\address{\begin{center}$^{1}$Group of Electrical Engineering, Paris (GeePs),UMR CNRS 8507, CentraleSup\'elec, Univ. Paris Sud, Universit\'e. Paris Saclay, UPMC Univ. Paris 06, 3 \& 11 rue Joliot-Curie 91192, Gif-sur-Yvette, France\end{center}}

\address{\begin{center}$^{2}$Department of Information Security, Cryptology, and Mathematics, Kookmin University, Seoul, 02707, Korea\end{center}}

\ead{\begin{center}sangwoo.kang@geeps.centralesupelec.fr, marc.lambert@geeps.centralesupelec.fr, and parkwk@kookmin.ac.kr\end{center}}


\begin{abstract}
The direct sampling method (DSM) has been introduced  for non-iterative imaging of small inhomogeneities and is known to be fast, robust, and effective for inverse scattering problems. However, to the best of our knowledge, a  full analysis of the behavior of the DSM has not been  provided yet. Such an analysis is  proposed here within the framework of the asymptotic hypothesis in the 2D case leading to the expression of the DSM indicator function in terms of the Bessel function of order zero and the sizes, shapes and permittivities of the inhomogeneities. Thanks to this analytical expression the limitations of the DSM method when one of the inhomogeneities is smaller and/or has lower permittivity than the others is exhibited and illustrated. An improved DSM is proposed to overcome this intrinsic limitation in the case of multiple incident waves. Then we show that both the traditional and improved DSM are closely related to a normalized version of the Kirchhoff migration. The theoretical elements of our proposal are supported by various results from numerical simulations with synthetic and experimental data.
\end{abstract}

\section{Introduction}
The non-invasive and non-destructive reconstruction of location and shape of unknown targets is a popular research subject since it can be applied to various problems like, and without exhaustivity, identifying defects in bridges and concrete walls \cite{FFK,PGBM,VS}, through-wall imaging \cite{B6,C3,KKN}, non-destructive testing \cite{AD,HLD,NDT_review}, and biomedical imaging for detecting breast cancer \cite{HSM1,IMD,S2}. Unfortunately, because of the intrinsic difficulties related to its ill-posedness and nonlinearity, such inverse scattering problems are difficult to solve. Various reconstruction algorithms have been investigated to overcome those difficulties. The main approach of such algorithms is based on the least-squares method and Newton-type iterative schemes in order to obtain the shape of the unknown target (minimizer), which minimizes the norm between the measured scattered in the presence of true and man-made targets; see \cite{CZBN,DL,2d_kress_open_arc,RMMP}.

Generally, for an iterative scheme to be successfully applied, it has to be initialized with a initial guess that is close enough to the unknown target. In other words, we require \textit{a priori} information about it. In order to have a better initialization, various non-iterative techniques have been investigated, such as MUltiple SIgnal Classification (MUSIC) \cite{Ammari_music, kirsch_music, PL3}, linear sampling method \cite{lsm2d_crack,Kisch_lsm2d,why_lsm}, topological derivatives \cite{ammari_topological, Park2, topological1}, and Kirchhoff and subspace migrations \cite{Ammari_mf,kirchhoff_phases,Kirchhoff_bybrid}. These techniques yield good results with a large number of incident waves and corresponding scattered fields whereas the efficiency decreases when the number of the incident waves is not large enough,  refer to  \cite{limited-view-it,PL1,PL3}.

The recently developed direct sampling method (DSM) is a non-iterative technique for imaging the shapes and the localizations of small and extended targets using either one or a few incident fields. According to \cite{dsm2d_ito1,dsm2d_farfield,dsm3d_ito1}, DSM is fast because it does not require any additional operation such as singular-value decomposition (in subspace migration), generating a projection operator onto the noise space (in MUSIC), solving ill-posed integral equations (in linear sampling) or adjoint problems (in topological derivatives), and is robust with respect to some random noise. However,DSM might fail to identify an inhomogeneity that is much smaller than the others or whose permittivity is much lower. This behavior can be explained in the framework of the asymptotic theory of the scattering of small scatterers which, into our best knowledge, has not been done yet.


With the help of the expression of the scattered field obtained using the already mentioned asymptotic theory the indicator function of DSM is expressed as a function of the number, the sizes and the permittivities of the inhomogeneities and the Bessel function of order zero. Thanks to this analysis the reasons of the limitations of the original DSM are exhibited and an improved version is proposed.  Then, we show that the original DSM and its alternative version are strongly connected with a normalized version of the Kirchhoff migration.

The paper is structured as follows. In Section~\ref{sec:2},  the two-dimensional direct scattering problem with small dielectric anomalies in a homogeneous medium is introduced. In Section~\ref{sec:3}, a short description of original DSM  for single and multiple impinging directions is proposed. In Section~\ref{sec:4}, a theoretical analysis of the performance of the DSM is established in the framework of the asymptotic hypothesis and an alternative DSM which performs better for a multiple-transmitter configuration is proposed. Section~\ref{sec:5} introduces the Kirchhoff migration and shows its connection with the traditional DSM and its alternative version.  Section~\ref{sec:6} is dedicated to the numerical experiments illustrating our proposal in various cases. Conclusions and perspectives are in Section~\ref{sec:7}.

\section{Two-dimensional direct scattering problem}\label{sec:2}
In this section, the two-dimensional direct scattering problem in the case of a set of small dielectric inclusions is briefly introduced (see Figure~\ref{Location1-1} for a sketch). Let us assume that an homogeneous space is affected by a collection of $M$ inhomogeneities $\tau_{m}, m = 1, \ldots, M$ and let $\tau$ be the collection of $\tau_{m}$, i.e., $\tau=\sum_{m=1}^{M}\tau_{m}$. 

Herein, we assume that all involved materials are non-magnetic and are characterized by their dielectric permittivity at the operating angular frequency $\omega=2\pi f$, $f$ being the frequency in Hz. Let $\mu(\fx)\equiv\mu_0$ be the magnetic permeability and $\eps_{0}$ and $\eps_{m}$ be he dielectric permittivity of $\rb^{2}$ and $\tau_{m}$, respectively. A piecewise-constant permittivity $0<\eps(\fx)<+\infty$ and wavenumber $0<k(\fx)<+\infty$ can then be defined as
\[\eps(\fx)=\left\{
\begin{array}{rl}
\medskip\eps_{m},&\fx\in\tau_{m},\\
\eps_{0},&\fx\in\rb^{2}\backslash\overline{\tau}
\end{array}
\right.~~\mbox{and}\quad
k(\fx)=\left\{
\begin{array}{rl}
\medskip k_m=\omega\sqrt{\eps_m\mu_0},&\fx\in\tau_{m},\\
k_0=\omega\sqrt{\eps_0\mu_0},&\fx\in\rb^{2}\backslash\overline{\tau}
\end{array}
\right.,\]
respectively. Herein, the wavenumber $k_0$ is of the form $k_0=2\pi/\lambda$, where $\lambda$ denotes the wavelength. 

In order to be within the framework of the asymptotic formula $\tau_m$ is defined as a dielectric inhomogeneity of small size $\alpha_m\ll\lambda/2$  as
\begin{equation}
\tau_{m}=\fr_{m}+\alp_{m}\fB_{m},
\end{equation}
where $\fB_{m}$ is a simply connected domain with a smooth boundary containing the origin and $\fr_{m}$ denotes the location of $\tau_m$, assumed to satisfy 
\begin{equation}
0< d_{0} < |\fr_{m}-\fr_{m'}|,\quad\forall m\neq m'.
\end{equation} 
 For simplicity, we assume that all $\tau_m$ are in a ball with radius $\alpha_m$, i.e., we let $\fB_{m}=\fB$ (Figure~\ref{Location1-2}) .

\begin{figure}
\centering
		\subfigure[Scattering problem]{\label{Location1-1}\centering 
		\begin{minipage}[c][6cm][c]{.45\linewidth}\centering
		\psscalebox{1.0 1.0} 
{\centering\small
\begin{pspicture}(0,-2.4345348)(6.833256,2.4345348)
\definecolor{colour0}{rgb}{1.0,0.8,0.0}
\definecolor{colour1}{rgb}{1.0,0.8,0.2}
\pscircle[linecolor=black, linewidth=0.04, fillstyle=gradient, gradlines=2000, gradbegin=colour1, gradend=colour0, dimen=outer](3.1651163,-0.11011628){0.3}
\psframe[linecolor=black, linewidth=0.04, dimen=outer](3.8069768,1.2922093)(0.94186044,-1.572907)
\pscircle[linecolor=black, linewidth=0.04, fillstyle=gradient, gradlines=2000, gradbegin=colour1, gradend=colour0, dimen=outer](1.7023256,0.7410465){0.3}
\rput(1.711628,0.7410465){$\tau_1$}
\rput(3.155814,-0.09151163){$\tau_2$}
\pscircle[linecolor=black, linewidth=0.04, fillstyle=gradient, gradlines=2000, gradbegin=colour1, gradend=colour0, dimen=outer](1.8418604,-0.8124419){0.3}
\rput(1.8325582,-0.8124419){$\tau_3$}
\pscircle[linecolor=black, linewidth=0.04, linestyle=dashed, dash=0.17638889cm 0.10583334cm, dimen=outer](2.3790698,-0.055465117){2.3790698}
\rput[bl](1.2255814,-1.3915117){$\Omega_\Gamma$}
\pscircle[linecolor=black, linewidth=0.04, fillstyle=gradient, gradlines=2000, gradbegin=black, gradend=black, dimen=outer](2.6744187,2.2782557){0.06744186}
\pscircle[linecolor=black, linewidth=0.04, fillstyle=gradient, gradlines=2000, gradbegin=black, gradend=black, dimen=outer](3.7209303,1.882907){0.06744186}
\pscircle[linecolor=black, linewidth=0.04, fillstyle=gradient, gradlines=2000, gradbegin=black, gradend=black, dimen=outer](4.3674417,1.2363954){0.06744186}
\pscircle[linecolor=black, linewidth=0.04, fillstyle=gradient, gradlines=2000, gradbegin=black, gradend=black, dimen=outer](4.716279,0.32011628){0.06744186}
\rput[bl](3.3116279,1.6177907){$\vec{x}_{n}$}
\psline[linecolor=black, linewidth=0.04, arrowsize=0.05291667cm 2.5,arrowlength=1.4,arrowinset=0.0]{<-}(5.293023,0.14802326)(6.0744185,0.14802326)(6.0744185,0.14802326)(6.0744185,0.14802326)
\rput[bl](6.2232556,0.045697674){$\vec{d}$}
\psline[linecolor=black, linewidth=0.026](5.9255815,0.49220932)(5.9255815,-0.21476744)
\psline[linecolor=black, linewidth=0.026](5.772093,0.4782558)(5.772093,-0.19151163)
\rput[bl](5.023256,-0.55895346){Transmitter}
\rput[bl](4.9581394,2.1945348){Receivers}
\psline[linecolor=black, linewidth=0.04, arrowsize=0.05291667cm 2.5,arrowlength=1.4,arrowinset=0.0]{<-}(2.660465,2.2968605)(4.837209,2.4084883)
\psline[linecolor=black, linewidth=0.04, arrowsize=0.05291667cm 2.5,arrowlength=1.4,arrowinset=0.0]{<-}(3.7425542,1.9259433)(4.8248878,2.2956848)
\psline[linecolor=black, linewidth=0.04, arrowsize=0.05291667cm 2.5,arrowlength=1.4,arrowinset=0.0]{<-}(4.3912096,1.2584435)(4.827395,2.1445796)
\rput[bl](0.34418604,1.6177907){$\Gamma$}
\psline[linecolor=black, linewidth=0.04, arrowsize=0.05291667cm 2.5,arrowlength=1.4,arrowinset=0.0]{<-}(4.7865586,0.40728074)(4.980883,2.0189984)
\end{pspicture}
}
\end{minipage}
		}
		\subfigure[Inhomogeneities $\tau_{m}$]{\label{Location1-2}\centering
				\begin{minipage}[c][6cm][c]{.35\linewidth}\centering
\psscalebox{1.0 1.0} 
{\centering\small
\begin{pspicture}(0,-0.97)(2.52,0.97)
\definecolor{colour0}{rgb}{1.0,0.8,0.0}
\pscircle[linecolor=black, linewidth=0.04, fillstyle=solid,fillcolor=colour0, dimen=outer](0.97,0.0){0.97}
\psline[linecolor=black, linewidth=0.04, linestyle=dashed, dash=0.17638889cm 0.10583334cm, arrowsize=0.01cm 4.0,arrowlength=1.4,arrowinset=0.0,dotsize=0.07055555cm 2.0]{<-*}(1.58,0.59)(0.98,0.01)(0.98,0.01)
\rput[bl](0.78,-0.47){$\vec{r}_{m}$}
\rput[bl](1.22,-0.05){$\alpha_m$}
\rput[bl](2.02,-0.63){$\vec{B}_{m}$}
\end{pspicture}
}		
\end{minipage}
}
		\caption{\label{Location}Configuration of the scattering problem for $M=3$ (left) and sketch of the inhomogeneities $\tau_{m}$ (right).}
\end{figure}
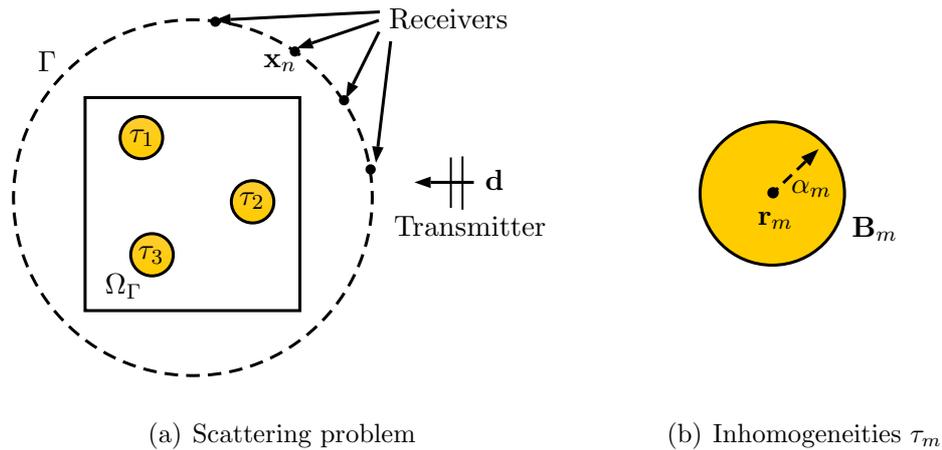

We consider the following plane-wave illumination: let $u^{i}(\fx,\fd)=\rme^{\rmi k_{0}\fd\cdot\fx}$ be the incident field with propagation direction $\fd\in\mathbb{S}^{1}$, where $\mathbb{S}^{1}$ denotes the two-dimensional unit circle. We let $u(\fx,\fd)$ be the time-harmonic total field that satisfies the Helmholtz equation
\begin{equation}\label{helmholtz}
\lap u(\fx,\fd)+k^{2}(\fx)u(\fx,\fd)=0
\end{equation}
with the appropriate transmission conditions on the boundary of $\tau_{m}$. The total field $u(\fx,\fd)$ can be decomposed as $u^{i}(\fx,\fd)+u^{s}(\fx,\fd)$, where $u^{s}(\fx,\fd)$ is the scattered field that is required to satisfy the Sommerfeld radiation condition
\begin{equation}
\lim_{|\fx|\to\infty}\bke{\f{\pa u^{s}(\fx,\fd)}{\pa\fx}-\rmi k_{0}u^{s}(\fx,\fd)}=0
 \end{equation}
uniformly in all directions $\hat{\fx}=\fx/|\fx|$. Under such assumptions and according to \cite{AmmariKang}, we have, what follows
\begin{lemma}[Asymptotic formula for the scattered field]\label{asymptotic_scatter}
	Assume that $\tau_m$ are well separated from each other. Then, $u^{s}(\fx,\fd)$ can be represented by the following asymptotic expansion:
	\begin{equation}\label{Asymptotic}
	u^{s}(\fx,\fd)=\f{k_0^{2}(1+\rmi)}{4\sqrt{k_0\pi}}\sum_{m=1}^{M}\alp_{m}^{2}
\left(\frac{\eps_{m}-\eps_{0}}{\sqrt{\eps_0\mu_0}}\right)\abs{\fB}u^{i}(\fr_m,\fd)\Phi(\fr_{m},\fx) + \mathcal{O}(\alp_{m}^{2}),
	\end{equation}
	where $\abs{\fB}$ denotes the area of $\fB$ and $\Phi$ is the two-dimensional fundamental solution of the Helmholtz equation (or Green's function):
\begin{equation}
\Phi(\fz,\fx)=-\f{\rmi}{4}\mathrm{H}^{1}_{0}(k_{0}|\fz-\fx|)=-\f{\rmi}{4}\bigg(\mathrm{J}_0(k_{0}|\fz-\fx|)+\rmi \mathrm{Y}_0(k_{0}|\fz-\fx|)\bigg),
\end{equation}
where $\mathrm{J}_0$ and $\mathrm{Y}_0$ are the zeroth-order Bessel and Neumann function, respectively.
\end{lemma}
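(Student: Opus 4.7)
The plan is to start from the Lippmann--Schwinger volume integral representation of the scattered field. Since the contrast $k^2(\fy) - k_0^2$ vanishes outside $\tau = \bigcup_{m} \tau_m$ and equals $k_m^2 - k_0^2$ on $\tau_m$, one has
\[
u^s(\fx,\fd) = \sum_{m=1}^{M} (k_m^2 - k_0^2) \int_{\tau_m} u(\fy,\fd)\,\Phi(\fy,\fx)\,d\fy
\]
for every $\fx \notin \overline{\tau}$. Rescaling each inclusion via $\fy = \fr_m + \alp_m \fz$ with $\fz \in \fB$, the 2D Jacobian contributes a factor $\alp_m^2$ and the $m$-th integral becomes $\alp_m^2 \int_\fB u(\fr_m+\alp_m\fz,\fd)\,\Phi(\fr_m+\alp_m\fz,\fx)\,d\fz$.

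Next I would Taylor expand both factors about $\fr_m$. For $\fx$ at fixed distance from all the centres, the Green's function is smooth in its first argument, so $\Phi(\fr_m+\alp_m\fz,\fx) = \Phi(\fr_m,\fx) + \mathcal{O}(\alp_m)$ uniformly in $\fz \in \fB$. The delicate step is the corresponding expansion of the total field,
\[
u(\fr_m+\alp_m\fz,\fd) = u^i(\fr_m,\fd) + \mathcal{O}(\alp_m),
\]
uniform in $\fz \in \fB$. This is the classical small-volume result of Ammari and Kang: rescale the transmission problem on $\tau_m$ to a fixed problem on $\fB$, apply bounds for the associated single-layer and Neumann--Poincar\'e operators on $\partial \fB$, and use the separation hypothesis $|\fr_m - \fr_{m'}| > d_0$ to ensure that the cross-interactions between different inclusions contribute only at higher order in $\alp_m$.

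Substituting both expansions and extracting $|\fB|$ from $\int_\fB d\fz$ yields the leading-order formula
\[
u^s(\fx,\fd) = \sum_{m=1}^{M} (k_m^2 - k_0^2)\, \alp_m^2\, |\fB|\, u^i(\fr_m,\fd)\, \Phi(\fr_m,\fx) + \mathcal{O}(\alp_m^2).
\]
The coefficient shown in the statement is then recovered by rewriting the electric contrast as $k_m^2 - k_0^2 = \omega^2 \mu_0 (\eps_m - \eps_0)$, invoking $k_0 = \omega\sqrt{\eps_0 \mu_0}$, and, where applicable, absorbing the large-argument asymptotic $\mathrm{H}^{(1)}_0(t) \sim \sqrt{2/(\pi t)}\, \rme^{\rmi(t - \pi/4)}$ of the Hankel function into the constant $(1+\rmi)/(4\sqrt{k_0 \pi})$.

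The main obstacle in the whole argument is the interior bound $u - u^i = \mathcal{O}(\alp_m)$ inside each $\tau_m$: the Taylor expansion of $\Phi$ and the volume rescaling are essentially mechanical, whereas controlling the total field in a wavelength-small dielectric inclusion rests on the layer-potential machinery of Ammari--Kang and is the piece I would import rather than reprove.
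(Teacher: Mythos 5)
The paper itself offers no proof of this lemma: it is imported verbatim from the cited Ammari--Kang reference. Your outline therefore does more than the paper does, and its skeleton is the standard derivation and is sound in structure: the Lippmann--Schwinger representation with contrast supported on $\tau$, the rescaling $\fy=\fr_m+\alp_m\fz$ producing the $\alp_m^2$ Jacobian, the smooth expansion $\Phi(\fr_m+\alp_m\fz,\fx)=\Phi(\fr_m,\fx)+\mathcal{O}(\alp_m)$ for $\fx$ away from the inclusions, and the interior estimate $u(\fr_m+\alp_m\fz,\fd)=u^i(\fr_m,\fd)+\mathcal{O}(\alp_m)$, which you correctly identify as the only delicate ingredient and import from the same layer-potential machinery the paper relies on (and which is legitimate here, since only the $k^2$-coefficient has contrast, so no polarization-tensor term enters at leading order; the separation constant $d_0$ indeed relegates cross-interactions to higher order).

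The one genuine weak point is the final constant-matching step. Your route lands on the coefficient $k_m^2-k_0^2=\omega^2\mu_0(\eps_m-\eps_0)=k_0^2(\eps_m-\eps_0)/\eps_0$, whereas the lemma claims $\frac{k_0^2(1+\rmi)}{4\sqrt{k_0\pi}}\cdot\frac{\eps_m-\eps_0}{\sqrt{\eps_0\mu_0}}$. These are not the same constant, and ``absorbing the large-argument Hankel asymptotics'' cannot bridge the gap: the stated formula retains the exact near-field factor $\Phi(\fr_m,\fx)$, so injecting the far-field prefactor $(1+\rmi)/(4\sqrt{k_0\pi})$ in addition would double-count that asymptotics. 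That prefactor belongs to the far-field-pattern version of the small-volume expansion in Ammari--Kang, from which the paper's statement appears to be transcribed. So as a proof of the lemma \emph{as stated}, your last step is hand-waved and does not actually produce the displayed coefficient; you should either derive the far-field form separately or state explicitly that your expansion differs from the quoted one by a fixed multiplicative constant. For everything downstream in the paper this is immaterial, since all indicator functions are normalized and only the relative weights $\alp_m^2(\eps_m-\eps_0)$ and the factor $\mathrm{J}_0(k_0|\fz-\fr_m|)$ survive, but the discrepancy should be acknowledged rather than absorbed.
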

This formula will play a key role in our investigation.

\section{Introduction to the direct sampling method}\label{sec:3}
In this section, the DSM is briefly introduced (a more detailed discussion can be found in \cite{dsm2d_ito1,dsm2d_farfield}). The scattered-field data are measured at $N$ points $\fx_n$, $n=1,2,\cdots,N$, over the measurement curve $\Gamma$ and $\Omega_\Gamma$ is a domain that is enclosed by $\Gamma$ as described in Figure~\ref{Location1-1}. As in \cite{dsm2d_ito1}, we assume that the total number of measurement points $N$ is sufficiently large and that $\Gamma$ is a simply connected smooth curve.

\paragraph{Single impinging direction} The indicator function of DSM is defined by
\begin{equation}\label{DSM}
\mathcal{I}_{\mathrm{DSM}}(\fz):=\f{\displaystyle\abs{\bkc{u^{s}(\fx_n,\fd),\Phi(\fz,\fx_n)}_{L^{2}(\Gamma)}}}{\displaystyle\norm{u^{s}(\fx_n,\fd)}_{L^{2}(\Gamma)}\norm{\Phi(\fz,\fx_n)}_{L^{2}(\Gamma)}},
\end{equation}
for any search point $\fz\in\Omega_\Gamma$ where
\begin{eqnarray}
\bkc{u^{s}(\fx_n,\fd),\Phi(\fz,\fx_n)}_{L^{2}(\Gamma)}&=\sum_{n=1}^{N}u^{s}(\fx_n,\fd)\overline{\Phi(\fz,\fx_n)}\rmd\fx \label{DiscretInnerProduct}\\
& \approx\int_{\Gamma}u^{s}(\fx,\fd)\overline{\Phi(\fz,\fx)}\rmd\fx \text{ when } N \rightarrow \infty \label{ContinousInnerProduct}
\end{eqnarray}
and
\begin{equation}
\norm{u^{s}(\fx_n,\fd)}_{L^{2}(\Gamma)}^2=\bkc{u^{s}(\fx_n,\fd),u^{s}(\fx_n,\fd)}_{L^{2}(\Gamma)}.
\end{equation}

Rewriting (9) of \cite{dsm2d_ito1} with our notation gives the unknown scattered field $u^{s}(\fx_n,\fd)$  as
\begin{equation}\label{EscatApprox}
u^{s}(\fx_n, \fd)\approx\sum_{m=1}^{M}W_{m}(\fd,\fr_m)\Phi(\fx_n,\fr_m),
\end{equation}
where $W_{m}(\fd,\fr_m)$ denotes the weight function corresponding to $\tau_{m}$. 
Combining \eref{DiscretInnerProduct} and \eref{EscatApprox} after proper developments (see \cite{dsm2d_ito1} for details)  the following expression is obtained 
\begin{eqnarray}
\bkc{u^{s}(\fx_n,\fd),\Phi(\fz,\fx_n)}_{L^{2}(\Gamma)} &\approx \f{1}{k_0}\sum_{m=1}^{M}W_{m}(\fd,\fr_m)\Im\left(\Phi(\fz,\fx_n) \right)\\
&\approx -\f{\rmi}{4k_0}\sum_{m=1}^{M}W_{m}(\fd,\fr_m)\mathrm{J}_{0}(k_{0}|\fr_{m}-\fz|) \label{lemma_near}
\end{eqnarray}
and thanks to the H{\"o}lder's inequality, we have 
\begin{equation}\label{TraditionalStructure}
\mathcal{I}_{\mathrm{DSM}}(\fz, \fd)=\f{\displaystyle\abs{\bkc{u^{s}(\fx_n,\fd),\Phi(\fz,\fx_n)}_{L^{2}(\Gamma)}}}{\displaystyle\norm{u^{s}(\fx_n,\fd)}_{L^{2}(\Gamma)}\norm{\Phi(\fz,\fx_n)}_{L^{2}(\Gamma)}}\propto\sum_{m=1}^{M} \left | \mathrm{J}_{0}(k_{0}|\fr_{m}-\fz|)\right |.
\end{equation}
Hence, if a point $\fz$ is in the support of one of the inclusions (i.e., $\fz\approx\fr_{m}\in\tau$) then $\mathcal{I}_{\mathrm{DSM}}(\fz)\approx 1$; otherwise, if $\fz\not\in\tau$ then $\mathcal{I}_{\mathrm{DSM}}(\fz)\not\approx 1$ which allow the localization $\fr_m$ of $\tau_m$ via the map of $\mathcal{I}_{\mathrm{DSM}}(\fx)$.

\paragraph{Multiple impinging directions} The indicator function  $\mathcal{I}_{\mathrm{DSM}}(\fz)$  is then given by
\begin{equation}\label{ImagingFew}
\mathcal{I}_{\mathrm{DSM}}(\fz)=\max\left\{\mathcal{I}_{\mathrm{DSM}}(\fz,\fd_{l}), l=1,\ldots,L:\fz\in\Omega_\Gamma\right\},
\end{equation}
where $\mathcal{I}_{\mathrm{DSM}}(\fz,\fd_{l})$ is the indicator function for an incident field of propagation direction $\fd_l$. Note that \eref{ImagingFew} also works if $L=1$ and provides the same result than \eref{TraditionalStructure} so from now on only \eref{ImagingFew} is used whatever the number of incidences $L$ is.

However, this is restricted to the following situation: either there is only one inhomogeneity or, if several, permittivities and radii of all $\tau_m$ are the same. In practice, \eref{TraditionalStructure} does not describe other situations accurately. Hence, further analysis of the structure of the indicator function is required . 

\section{Structure analysis of the direct sampling method and  alternative direct sampling method}\label{sec:4}
In the following the mathematical structure of the DSM indicator function is analyzed thanks to the asymptotic hypothesis on the scatterers and the corresponding scattering field formula \eref{Asymptotic}. An alternative direct sampling method is suggested to improve efficiency of the classical method.
  

\subsection{Analysis of direct sampling method in the asymptotic hypothesis}
\paragraph{Single impinging direction} in this case $\mathcal{I}_{\mathrm{DSM}}(\fz)$ can be written as follows
\begin{theorem}\label{DSM_case1}
Assume that the total number $N$ of measurement points is sufficiently large. Then $\mathcal{I}_{\mathrm{DSM}}(\fx)$ can be represented as
	\begin{equation}\label{Structure_case1}\fl
	\mathcal{I}_{\mathrm{DSM}}(\fz)=\frac{|\Psi_1(\fz)|}{\displaystyle\max_{\fz\in\Omega_\Gamma}|\Psi_1(\fz)|}, \text{ where } \Psi_1(\fz)=\sum_{m=1}^{M}\alp_{m}^{2}(\eps_{m}-\eps_{0})\e^{\rmi k_{0}\fd\cdot\fr_{m}}\mathrm{J}_{0}(k_0|\fz-\fr_{m}|).
	\end{equation}
\end{theorem}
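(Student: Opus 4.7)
The plan is to substitute the asymptotic expansion \eref{Asymptotic} of Lemma~\ref{asymptotic_scatter} into the inner product appearing in the numerator of $\mathcal{I}_{\mathrm{DSM}}(\fz)$, reduce the resulting integrals to Bessel functions via the same near-field identity already invoked between \eref{DiscretInnerProduct} and \eref{lemma_near}, and finally absorb all $\fz$-independent prefactors into one overall normalization constant fixed by requiring $\max_{\fz}\mathcal{I}_{\mathrm{DSM}}(\fz)=1$ (consistent with the H\"older bound $\mathcal{I}_{\mathrm{DSM}}\le 1$).

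Concretely, \eref{Asymptotic} can be written as $u^{s}(\fx,\fd)\approx C\sum_{m=1}^{M}\alp_{m}^{2}(\eps_{m}-\eps_{0})\rme^{\rmi k_{0}\fd\cdot\fr_{m}}\Phi(\fr_{m},\fx)$, where $C=k_{0}^{2}(1+\rmi)\abs{\fB}/(4\sqrt{k_{0}\pi\eps_{0}\mu_{0}})$ is independent of both $\fz$ and $m$. Inserting this into \eref{ContinousInnerProduct} yields
\[
\bkc{u^{s}(\fx_n,\fd),\Phi(\fz,\fx_n)}_{L^{2}(\Gamma)}\approx C\sum_{m=1}^{M}\alp_{m}^{2}(\eps_{m}-\eps_{0})\rme^{\rmi k_{0}\fd\cdot\fr_{m}}\int_{\Gamma}\Phi(\fr_{m},\fx)\overline{\Phi(\fz,\fx)}\rmd\fx.
\]
The crucial step is the near-field identity $\int_{\Gamma}\Phi(\fr_{m},\fx)\overline{\Phi(\fz,\fx)}\rmd\fx\approx\frac{1}{k_{0}}\Im\Phi(\fr_{m},\fz)=-\frac{1}{4k_{0}}\mathrm{J}_{0}(k_{0}|\fr_{m}-\fz|)$, which follows from a stationary-phase evaluation of the product of Hankel functions on $\Gamma$ as $N\to\infty$ and $\Gamma$ becomes sufficiently extended, or equivalently from Green's theorem combined with the Sommerfeld radiation condition. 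This immediately gives $\bkc{u^{s}(\fx_n,\fd),\Phi(\fz,\fx_n)}_{L^{2}(\Gamma)}\approx -\frac{C}{4k_{0}}\Psi_{1}(\fz)$.

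For the denominator, $\norm{u^{s}(\fx_n,\fd)}_{L^{2}(\Gamma)}$ is manifestly independent of $\fz$, while applying the same Bessel identity with $\fr_{m}$ replaced by $\fz$ shows that $\norm{\Phi(\fz,\fx_n)}_{L^{2}(\Gamma)}^{2}$ is, to leading order, also essentially $\fz$-independent provided $\fz\in\Omega_\Gamma$ stays well separated from $\Gamma$ so that the Green-function singularity at $\fx=\fz$ is avoided. The ratio in \eref{DSM} therefore reduces to $K|\Psi_{1}(\fz)|$ for some $\fz$-independent constant $K>0$; requiring the maximum to be unity forces $K=1/\max_{\fz\in\Omega_\Gamma}|\Psi_{1}(\fz)|$, which gives the claimed expression \eref{Structure_case1}.

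The main obstacle will be the rigorous justification of the near-field Bessel identity for a general smooth measurement curve $\Gamma$: the argument is essentially a stationary-phase calculation that is only asymptotically sharp once the sampling is dense and the diameter of $\Gamma$ is large compared to $\lambda$. The approximate $\fz$-independence of $\norm{\Phi(\fz,\fx_n)}_{L^{2}(\Gamma)}$, and the consequent absorption of the overall numerator constant into a single normalization factor, rely on the same asymptotic regime together with the separation assumption $\fz\in\Omega_\Gamma$ and the scatterer-separation condition $|\fr_{m}-\fr_{m'}|>d_{0}$ from Section~\ref{sec:2}.
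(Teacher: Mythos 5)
Your proposal is correct and follows essentially the same route as the paper: substitute the asymptotic expansion \eref{Asymptotic} into the inner product \eref{ContinousInnerProduct}, reduce $\int_{\Gamma}\Phi(\fr_{m},\fx)\overline{\Phi(\fz,\fx)}\,\rmd\fx$ to $\mathrm{J}_{0}(k_{0}|\fz-\fr_{m}|)$ via the same near-field identity, and normalize. Your explicit argument that $\norm{\Phi(\fz,\fx_n)}_{L^{2}(\Gamma)}$ is essentially $\fz$-independent is a slightly more detailed justification of the final normalization step than the paper's terse appeal to H{\"o}lder's inequality, but it is the same argument in substance.
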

\begin{proof}
	Combining \eref{Asymptotic}  and \eref{ContinousInnerProduct} leads to 
	\begin{eqnarray}\label{Calculation}
	\fl
	\bkc{u^{s}(\fx_n,\fd),\Phi(\fz,\fx_n)}_{L^{2}(\Gamma)}&=\int_{\Gamma}u^{s}(\fx,\fd)\overline{\Phi(\fz,\fx)}\rmd\fx\\
	&=\int_{\Gamma}\f{k_0^{2}(1+\rmi)}{4\sqrt{k_0\pi}}\sum_{m=1}^{M}\alp_{m}^{2}
\left(\frac{\eps_{m}-\eps_{0}}{\sqrt{\eps_0\mu_0}}\right)\abs{\fB}\rme^{\rmi k_{0}\fd\cdot\fr_m}\Phi(\fr_{m},\fx)\overline{\Phi(\fz,\fx)}\rmd\fx\nonumber\\
&=\f{k_0^{2}(1+\rmi)}{4\sqrt{k_0\pi}}\sum_{m=1}^{M}\alp_{m}^{2}
\left(\frac{\eps_{m}-\eps_{0}}{\sqrt{\eps_0\mu_0}}\right)\abs{\fB}\rme^{\rmi k_{0}\fd\cdot\fr_m}\int_{\Gamma}\Phi(\fr_{m},\fx)\overline{\Phi(\fz,\fx)}\rmd\fx\nonumber\\
	&\approx\f{k_0(1-\rmi)}{16\sqrt{k_0\pi}}\sum_{m=1}^{M}\alp_{m}^{2}
\left(\frac{\eps_{m}-\eps_{0}}{\sqrt{\eps_0\mu_0}}\right)\abs{\fB}\rme^{\rmi k_{0}\fd\cdot\fr_m}\mathrm{J}_{0}(k_{0}|\fz-\fr_{m}|).
	\end{eqnarray}
Applying H{\"o}lder's inequality gives
\begin{equation}
\abs{\bkc{u^{s}(\fx_n,\fd),\Phi(\fz,\fx_n)}_{L^{2}(\Gamma)}}\leq\norm{u^{s}(\fx_n,\fd)}_{L^{2}(\Gamma)}\norm{\Phi(\fz,\fx_n)}_{L^{2}(\Gamma)},
\end{equation}
which leads to \eref{Structure_case1} and completes the proof.
\end{proof}

\begin{remark}\label{Remark1}
Theorem~\eref{DSM_case1} shows that the imaging performance of the indicator function $\mathcal{I}_{\mathrm{DSM}}(\fx)$ is highly dependent on the permittivity, size, and number of the inhomogeneities. If one of those has a  permittivity and/or a size which is significantly larger than of the others, it might be the only one to be identified, the remaining others being not or only partially seen.
\end{remark}

\begin{remark}\label{Remark2} If the radii and permittivities of all the inhomogeneities are the same (i.e., $\alpha_m\equiv\alpha$ and $\eps_m\equiv\eps$ for $m=1,2,\cdots,M$), 
and knowing that $|\e^{\rmi k_{0}\fd\cdot\fr_{m}}|=1$ and $\alp_{m}^{2}(\eps_{m}-\eps_{0})\equiv\alp^{2}(\eps-\eps_{0})$ then $\mathcal{I}_{\mathrm{DSM}}(\fx)$ becomes
\begin{equation}
\mathcal{I}_{\mathrm{DSM}}(\fz)\propto\sum_{m=1}^{M} |\mathrm{J}_{0}(k_{0}|\fr_{m}-\fz|)|
\end{equation}
which is the same as \eref{TraditionalStructure} derived in \cite{dsm2d_ito1}.
\end{remark}

\paragraph{Multiple impinging directions} 
By combining \eref{ImagingFew} and \eref{Structure_case1}, it is easy to see that
\begin{equation}\label{ddsm}\fl
\mathcal{I}_{\mathrm{DSM}}(\fz)\propto \max\left\{\left|\sum_{m=1}^{M}\alp_{m}^{2}(\eps_{m}-\eps_{0})\abs{\fB}\e^{\rmi k_{0}\fd_{l}\cdot\fr_{m}}\mathrm{J}_{0}(k_0|\fz-\fr_{m}|)\right|, l=1, \ldots, L : \fz\in\Omega_\Gamma \right \}
\end{equation}
for which Remark~\ref{Remark1} and  Remark~\ref{Remark2} are also verified.

\subsection{Introduction and analysis of an alternative direct sampling method}
Thanks to our analysis of $\mathcal{I}_{\mathrm{DSM}}(\fz)$ and, in particular, $\Psi_1(\fz)$  \eref{Structure_case1}, it can be seen that the latter \eref{ddsm} contains a factor of the form of $\rme^{\rmi k_{0}\fd_l\cdot\fr_m}$ which generates artifacts due to the oscillating nature of the exponential function. To reduce such a behavior an alternative indicator function of DSM $\mathcal{I}_{\mathrm{DSMA}}(\fz)$ is proposed
\begin{equation}\label{DSMA}
\mathcal{I}_{\mathrm{DSMA}}(\fz):=\f{\displaystyle\left|\sum_{l=1}^{L}\e^{-\rmi k_{0}\fd_l\cdot\fz}\bkc{u^{s}(\fx_n,\fd_l),\Phi(\fz,\fx_n)}_{L^{2}(\Gamma)}\right|}{\displaystyle\max_{\fz\in\Omega_\Gamma}\left|\sum_{l=1}^{L}\e^{-\rmi k_{0}\fd_l\cdot\fz}\bkc{u^{s}(\fx_n,\fd_l),\Phi(\fz,\fx_n)}_{L^{2}(\Gamma)}\right|}.
\end{equation}

%
%
\begin{theorem}\label{DSM_case2}
Assume that the number $N$ of measurement points and the number $L$ of incident fields are sufficiently large. Then, $\mathcal{I}_{\mathrm{DSMA}}(\fz)$ can be represented as 
	\begin{equation}\label{Structure_case2}
	\mathcal{I}_{\mathrm{DSMA}}(\fz)=\frac{|\Psi_2(\fz)|}{\displaystyle\max_{\fz\in\Omega_\Gamma}|\Psi_2(\fz)|}, \text{ where } \Psi_2(\fz)=\sum_{m=1}^{M}\alp_{m}^{2}(\eps_{m}-\eps_{0})\mathrm{J}_{0}(k_0|\fz-\fr_{m}|)^2.
	\end{equation}
\end{theorem}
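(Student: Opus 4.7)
The plan is to follow the same path as in the proof of Theorem~\ref{DSM_case1} and then handle the extra sum over incident directions with a Jacobi--Anger type argument.

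First I would start from the asymptotic formula \eref{Asymptotic} of Lemma~\ref{asymptotic_scatter} and substitute it into $\bkc{u^{s}(\fx_n,\fd_l),\Phi(\fz,\fx_n)}_{L^{2}(\Gamma)}$, reusing verbatim the estimate already derived in \eref{Calculation}. This gives, for each $l$,
\begin{equation*}
\bkc{u^{s}(\fx_n,\fd_l),\Phi(\fz,\fx_n)}_{L^{2}(\Gamma)} \approx \f{k_{0}(1-\rmi)}{16\sqrt{k_{0}\pi}}\abs{\fB}\sum_{m=1}^{M}\alp_{m}^{2}\left(\frac{\eps_{m}-\eps_{0}}{\sqrt{\eps_0\mu_0}}\right)\rme^{\rmi k_{0}\fd_l\cdot\fr_m}\mathrm{J}_{0}(k_{0}|\fz-\fr_{m}|).
\end{equation*}

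Next, I would multiply by $\e^{-\rmi k_{0}\fd_l\cdot\fz}$ and exchange the two finite sums, which produces
\begin{equation*}
\sum_{l=1}^{L}\e^{-\rmi k_{0}\fd_l\cdot\fz}\bkc{u^{s}(\fx_n,\fd_l),\Phi(\fz,\fx_n)}_{L^{2}(\Gamma)} \propto \sum_{m=1}^{M}\alp_{m}^{2}(\eps_{m}-\eps_{0})\mathrm{J}_{0}(k_{0}|\fz-\fr_{m}|)\sum_{l=1}^{L}\rme^{\rmi k_{0}\fd_l\cdot(\fr_m-\fz)}.
\end{equation*}
The crucial step is to recognize the inner sum over $l$. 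Assuming the directions $\fd_l$ are uniformly distributed on $\mathbb{S}^{1}$ and $L$ is large enough, I would replace this Riemann sum by an integral over $\mathbb{S}^{1}$ and invoke the classical Bessel identity
\begin{equation*}
\f{1}{2\pi}\int_{\mathbb{S}^{1}}\rme^{\rmi k_{0}\fd\cdot(\fr_m-\fz)}\rmd\fd=\mathrm{J}_{0}(k_{0}|\fr_m-\fz|),
\end{equation*}
so that $\sum_{l=1}^{L}\rme^{\rmi k_{0}\fd_l\cdot(\fr_m-\fz)}\approx L\,\mathrm{J}_{0}(k_{0}|\fr_m-\fz|)$. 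Plugging this back yields a numerator proportional to $\Psi_2(\fz)=\sum_{m}\alp_{m}^{2}(\eps_{m}-\eps_{0})\mathrm{J}_{0}(k_{0}|\fz-\fr_{m}|)^{2}$; taking the modulus and dividing by the maximum over $\fz\in\Omega_\Gamma$ gives exactly \eref{Structure_case2}.

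I expect the main obstacle to be making the passage from the discrete sum over $\fd_l$ to the integral rigorous: one needs the $\fd_l$ to sample $\mathbb{S}^{1}$ sufficiently densely (this is precisely where the hypothesis ``$L$ sufficiently large'' enters) and one must control the error uniformly in $\fz\in\Omega_\Gamma$ so that normalizing by the maximum is legitimate. The remaining steps are essentially bookkeeping: the multiplicative constant $k_0(1-\rmi)\abs{\fB}L/(16\sqrt{k_0\pi\eps_0\mu_0})$ is independent of both $\fz$ and $m$ and therefore disappears after normalization, and H\"older's inequality is not needed here since the definition of $\mathcal{I}_{\mathrm{DSMA}}$ already builds in the normalization via $\max_{\fz\in\Omega_\Gamma}$.
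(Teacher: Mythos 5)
Your proposal is correct and follows essentially the same route as the paper: substitute the asymptotic expansion (i.e.\ reuse \eref{Calculation}), interchange the sums over $l$ and $m$, and reduce $\sum_{l}\rme^{\rmi k_{0}\fd_l\cdot(\fr_m-\fz)}$ to a multiple of $\mathrm{J}_{0}(k_{0}|\fr_m-\fz|)$ for large $L$, which the paper simply quotes from the literature as \eref{RepresentationBessel} while you justify it as a Riemann-sum version of the Bessel integral identity. Your side remark that the final normalization needs no H\"older inequality (unlike the paper's closing line) is accurate, since the definition of $\mathcal{I}_{\mathrm{DSMA}}$ already divides by the maximum over $\fz\in\Omega_\Gamma$.
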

\begin{proof} Let us note that if $L$ is sufficiently large, the following relationship holds (see \cite{Park}):
\begin{equation}\label{RepresentationBessel}
\sum_{l=1}^{L}\rme^{\rmi k_{0}\fd_l\cdot\fr_m}\overline{\rme^{\rmi k_{0}\fd_l\cdot\fz}}\approx2\pi \mathrm{J}_0(k_0|\fr_m-\fz|).
\end{equation}
Hence, by combining \eref{Calculation} and \eref{RepresentationBessel}, we obtain
\begin{equation}
\fl\eqalign{\Psi_2(\fz)&=\sum_{l=1}^{L}\overline{\e^{\rmi k_{0}\fd_l\cdot\fz}}\bkc{u^{s}(\fx_n,\fd_l),\Phi(\fz,\fx_n)}_{L^{2}(\Gamma)}\\
&=\sum_{l=1}^{L}\overline{\e^{\rmi k_{0}\fd_l\cdot\fz}}\left(\f{k_0\mu_0(1-\rmi)}{16\sqrt{k_0\pi}}\sum_{m=1}^{M}\alp_{m}^{2}
\left(\frac{\eps_{m}-\eps_{0}}{\sqrt{\eps_0\mu_0}}\right)\abs{\fB}\rme^{\rmi k_{0}\fd_l\cdot\fr_m}\mathrm{J}_{0}(k_{0}|\fz-\fr_{m}|)\right)\\
&=\f{k_0\mu_0(1-\rmi)}{16\sqrt{k_0\pi}}\sum_{m=1}^{M}\alp_{m}^{2}\left(\frac{\eps_{m}-\eps_{0}}{\sqrt{\eps_0\mu_0}}\right)\abs{\fB}\mathrm{J}_{0}(k_{0}|\fz-\fr_{m}|)\sum_{l=1}^{L}\rme^{\rmi k_{0}\fd_l\cdot\fr_m}\overline{\e^{\rmi k_{0}\fd_l\cdot\fz}}\\
&=\f{k_0\mu_0(1-\rmi)\pi}{8\sqrt{k_0\pi}}\sum_{m=1}^{M}\alp_{m}^{2}\left(\frac{\eps_{m}-\eps_{0}}{\sqrt{\eps_0\mu_0}}\right)\abs{\fB}\mathrm{J}_{0}(k_{0}|\fz-\fr_{m}|)^2.}
\end{equation}
Finally, applying H{\"o}lder's inequality, \eref{Structure_case2} is derived which completes the proof.
\end{proof}

\begin{remark}\label{Remark3}
	Based on the result in Theorem~\ref{DSM_case2}, we see that
	\begin{equation}\label{CompareImagingFunctions}
	\mathcal{I}_{\mathrm{DSM}}(\fz)\propto \abs{\mathrm{J}_{0}(k_{0}\abs{\fz-\fr_{m}})}\quad\mbox{and}\quad\mathcal{I}_{\mathrm{DSMA}}(\fz)\propto \mathrm{J}_{0}(k_{0}\abs{\fz-\fr_{m}})^2.
	\end{equation}
	One-dimensional plots of \eref{CompareImagingFunctions} are shown in Figure \ref{compare_bessel} and illustrate that $\mathcal{I}_{\mathrm{DSMA}}(\fz)$ would yield better images because its oscillations are smaller than those of $\mathcal{I}_{\mathrm{DSM}}(\fz)$. Hence, any unexpected artifact in the plot of $\mathcal{I}_{\mathrm{DSMA}}(\fz)$ is mitigated by having a sufficiently large number $L$ of incident fields. This result explains theoretically why $\mathcal{I}_{\mathrm{DSMA}}(\fz)$ with large $L$ offers better results than $\mathcal{I}_{\mathrm{DSM}}(\fz)$.
\end{remark}
\begin{figure}[h]
	\centering
	\includegraphics[width=\textwidth]{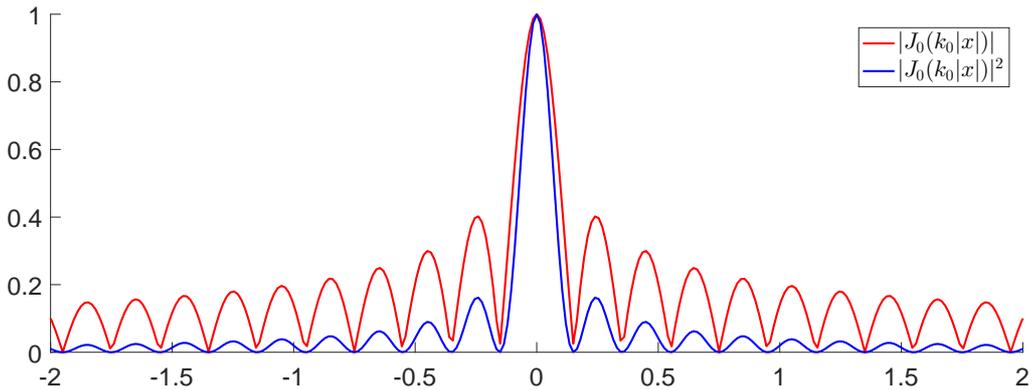}
	\caption{One-dimensional plots of $\abs{\mathrm{J}_{0}(k_{0}|x|)}$ and $|\mathrm{J}_{0}(k_{0}|x|)|^{2}$ for $k_{0}=2\pi/0.4$.}
	\label{compare_bessel}
\end{figure}

\section{Comparison between Kirchhoff migration and direct sampling method}\label{sec:5}
In the following the structures of the Kirchhoff migration, DSM and DSMA are compared. Let us assume that the total numbers of measurement $N$  and  of incident fields $L$ are sufficiently large and let us define the Multi-Static Response (MSR) matrix $\mathbb{K}\in\mathbb{C}^{N\times L}$ as
\begin{equation}
\mathbb{K}=\left[
\begin{array}{cccc}
\medskip u^{s}(\fx_1,\fd_1) & u^{s}(\fx_1,\fd_2) & \cdots & u^{s}(\fx_1,\fd_{L}) \\
\medskip u^{s}(\fx_2,\fd_1) & u^{s}(\fx_2,\fd_2) & \cdots & u^{s}(\fx_2,\fd_{L}) \\
\medskip \vdots & \vdots & \ddots & \vdots \\
u^{s}(\fx_{N},\fd_1) & u^{s}(\fx_{N},\fd_2) & \cdots & u^{s}(\fx_{N},\fd_{L})
\end{array}
\right].
\end{equation}
For $\fz\in\Omega_\Gamma$, the imaging function of Kirchhoff migration is defined as (e.g., see \cite{Ammari_mf})
\begin{equation}\label{image_kirchhoff}
\mathcal{I}_{\mathrm{KM}}(\fz):=\abs{\overline{\fW}_{1}(\fz)^{T}\mathbb{K}\overline{\fW}_{2}(\fz)},
\end{equation}
where
\begin{equation}\label{testvectors}
\eqalign{\fW_{1}(\fz)&=\bigg[\Phi(\fx_{1},\fz),\Phi(\fx_{2},\fz)\cdots,\Phi(\fx_{N},\fz)\bigg]^{T},\\
\fW_{2}(\fz)&=\bigg[\rme^{\rmi k_{0}\fd_{1}\cdot\fz},\rme^{\rmi k_{0}\fd_{2}\cdot\fz},\cdots,\rme^{\rmi k_{0}\fd_{L}\cdot\fz}\bigg]^{T}.}
\end{equation}
A normalized version of \eref{image_kirchhoff} is defined as
\begin{equation}\label{image_nkirchhoff}
\mathcal{I}_{\mathrm{NKM}}(\fz):=\f{\abs{\overline{\fW}_{1}(\fz)^{T}\mathbb{K}\overline{\fW}_{2}(\fz)}}{\displaystyle\max_{\fz\in\Omega_\Gamma}\abs{\overline{\fW}_{1}(\fz)^{T}\mathbb{K}\overline{\fW}_{2}(\fz)}}
\end{equation}
and will be used for our purpose. Then, the following statement is proposed:
\begin{theorem}\label{KM_case2}
	Suppose that the total numbers $L$ of incident fields and $N$ of measurement points are sufficiently large. Then, $\mathcal{I}_{\mathrm{NKM}}(\fz)$ can be represented as 
	\begin{equation}\label{StructureKM_case2}\fl
	\mathcal{I}_{\mathrm{NKM}}(\fz)=\frac{|\Psi_3(\fz)|}{\displaystyle\max_{\fz\in\Omega_\Gamma}|\Psi_3(\fz)|}, \text{ where } \Psi_3(\fz)=\sum_{m=1}^{M}\alp_{m}^{2}(\eps_{m}-\eps_{0})\mathrm{J}_{0}(k_{0}\abs{\fz-\fr_{m}})^{2}.
	\end{equation}
\end{theorem}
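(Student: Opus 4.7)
The plan is to reduce Theorem~\ref{KM_case2} to a straightforward synthesis of the techniques already developed for Theorems~\ref{DSM_case1} and~\ref{DSM_case2}, since the matrix product $\overline{\fW}_{1}(\fz)^{T}\mathbb{K}\overline{\fW}_{2}(\fz)$ naturally decouples into a receiver sum and a transmitter sum once the asymptotic formula for $u^{s}$ is inserted.

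First I would expand the bilinear form as the double sum
\[
\overline{\fW}_{1}(\fz)^{T}\mathbb{K}\overline{\fW}_{2}(\fz)=\sum_{n=1}^{N}\sum_{l=1}^{L}\overline{\Phi(\fx_{n},\fz)}\,u^{s}(\fx_{n},\fd_{l})\,\overline{\rme^{\rmi k_{0}\fd_{l}\cdot\fz}},
\]
then substitute the asymptotic expansion \eref{Asymptotic} of Lemma~\ref{asymptotic_scatter} for $u^{s}(\fx_{n},\fd_{l})$. After interchanging the order of the three summations (over $m$, $n$, and $l$), the dependence on receivers and transmitters factorizes cleanly, giving
\[
C\sum_{m=1}^{M}\alp_{m}^{2}(\eps_{m}-\eps_{0})\abs{\fB}\left(\sum_{n=1}^{N}\overline{\Phi(\fx_{n},\fz)}\Phi(\fr_{m},\fx_{n})\right)\left(\sum_{l=1}^{L}\rme^{\rmi k_{0}\fd_{l}\cdot\fr_{m}}\overline{\rme^{\rmi k_{0}\fd_{l}\cdot\fz}}\right),
\]
where the constant $C$ absorbs all prefactors.

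Next, to handle the receiver sum I would invoke the large-$N$ quadrature $\sum_{n}(\cdot)\approx\int_{\Gamma}(\cdot)\,\rmd\fx$ together with the integral identity already established in the computation leading to \eref{Calculation}, producing a factor proportional to $\mathrm{J}_{0}(k_{0}\abs{\fz-\fr_{m}})$. For the transmitter sum I would apply \eref{RepresentationBessel}, which also yields a factor proportional to $\mathrm{J}_{0}(k_{0}\abs{\fz-\fr_{m}})$. Their product is precisely $\mathrm{J}_{0}(k_{0}\abs{\fz-\fr_{m}})^{2}$, so
\[
\overline{\fW}_{1}(\fz)^{T}\mathbb{K}\overline{\fW}_{2}(\fz)\approx C'\sum_{m=1}^{M}\alp_{m}^{2}(\eps_{m}-\eps_{0})\,\mathrm{J}_{0}(k_{0}\abs{\fz-\fr_{m}})^{2},
\]
and taking the modulus and dividing by its maximum over $\fz\in\Omega_\Gamma$ cancels all constants, delivering $\Psi_{3}(\fz)$ as claimed.

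The main step is the Fubini-style reordering that decouples the receiver and transmitter sums; once this is done, the two asymptotic regimes $N\to\infty$ and $L\to\infty$ can be applied independently using building blocks already in hand. I do not expect a genuine obstacle here: the only mild subtlety is verifying that the well-separated assumption on the $\tau_{m}$ enters only in justifying the asymptotic formula \eref{Asymptotic}, so that no cross-anomaly terms require further treatment. As a byproduct, since $\Psi_{3}=\Psi_{2}$, the theorem confirms that $\mathcal{I}_{\mathrm{NKM}}$ and $\mathcal{I}_{\mathrm{DSMA}}$ share an identical asymptotic structure, which is exactly the connection advertised at the end of Section~\ref{sec:3} of the introduction.
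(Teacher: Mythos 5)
Your proposal is correct and follows essentially the same route as the paper: insert the asymptotic expansion \eref{Asymptotic}, use the large-$N$ identity behind \eref{Calculation} for the receiver sum and \eref{RepresentationBessel} for the transmitter sum, and normalize to cancel the constants. The only difference is cosmetic -- the paper first forms the intermediate row vector $\overline{\fW}_{1}(\fz)^{T}\mathbb{K}$ with entries $U_{l}(\fz,\fd_l)$ before contracting with $\overline{\fW}_{2}(\fz)$, whereas you write the full double sum and reorder it, which is the same computation.
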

\begin{proof}
	From \eref{image_kirchhoff} it can be shown that
	\begin{equation}
	\fl\eqalign{
	\overline{\fW}_{1}(\fz)^{T}\mathbb{K}&=
\left[\begin{array}{c}\medskip\overline{\Phi(\fz,\fx_{1})}\\ \medskip\overline{\Phi(\fz,\fx_{2})}\\
\medskip\vdots\\
\overline{\Phi(\fz,\fx_{N})}\end{array}\right]^T\left[
\begin{array}{cccc}
\medskip u^{s}(\fx_1,\fd_1) & u^{s}(\fx_1,\fd_2) & \cdots & u^{s}(\fx_1,\fd_{L}) \\
\medskip u^{s}(\fx_2,\fd_1) & u^{s}(\fx_2,\fd_2) & \cdots & u^{s}(\fx_2,\fd_{L}) \\
\medskip \vdots & \vdots & \ddots & \vdots \\
u^{s}(\fx_{N},\fd_1) & u^{s}(\fx_{N},\fd_2) & \cdots & u^{s}(\fx_{N},\fd_{L})
\end{array}
\right]\\
	&=\bigg[U_{1}\left(\fz,\fd_1\right),U_{2}\left(\fz,\fd_2\right),\cdots,U_{L}\left(\fz,\fd_L\right)\bigg]:=\fU\left(\fz\right),}
	\end{equation}
	where
	\begin{equation}
	U_{l}\left(\fz,\fd_l\right):=\sum_{n=1}^{N}\overline{\Phi(\fz,\fx_{n})}u^{s}(\fx_{n},\fd_{l}),\quad l=1,\cdots,L.
	\end{equation}
	Combining the latter with \eref{Calculation} leads to
	\begin{equation}\label{KM_U}
	U_{l}\left(\fz,\fd_l\right)=\f{k_0\mu_0(1-\rmi)}{16\sqrt{k_0\pi}}\sum_{m=1}^{M}\alp_{m}^{2}
\left(\frac{\eps_{m}-\eps_{0}}{\sqrt{\eps_0\mu_0}}\right)\abs{\fB}\rme^{\rmi k_{0}\fd_l\cdot\fr_m}\mathrm{J}_{0}(k_{0}|\fz-\fr_{m}|).
	\end{equation}
	Rewritting \eref{image_kirchhoff} with the use of  \eref{KM_U} and \eref{RepresentationBessel} gives
	\begin{equation}
	\fl\eqalign{\mathcal{I}_{\mathrm{KM}}(\fz)&=\overline{\fW}_{1}(\fz)^{T}\mathbb{K}_{1}\overline{\fW}_{2}(\fz)=\fU\left(\fz\right)\overline{\fW}_{2}(\fz)\\
	&=\bigg[U_{1}\left(\fz,\fd_1\right),U_{2}\left(\fz,\fd_2\right),\cdots,U_{L}\left(\fz,\fd_L\right)\bigg]\bigg[\rme^{-\rmi k_{0}\fd_{1}\cdot\fz},\rme^{-\rmi k_{0}\fd_{2}\cdot\fz},\cdots,\rme^{-\rmi k_{0}\fd_{L}\cdot\fz}\bigg]^{T}\\
	&=\f{k_0\mu_0(1-\rmi)}{16\sqrt{k_0\pi}}\sum_{m=1}^{M}\alp_{m}^{2}
\left(\frac{\eps_{m}-\eps_{0}}{\sqrt{\eps_0\mu_0}}\right)\abs{\fB}\mathrm{J}_{0}(k_{0}|\fz-\fr_{m}|)\bke{\sum_{l=1}^{L}\rme^{\rmi k_{0}\fd_{l}\cdot(\fr_{m}-\fz)}}\\
	&=\f{k_0\mu_0(1-\rmi)\pi}{8\sqrt{k_0\pi}}\sum_{m=1}^M\alp_{m}^{2}\bke{\f{\eps_{m}-\eps_{0}}{\sqrt{\eps_{0}\mu_0}}}\abs{\fB_{m}}\mathrm{J}_{0}(k_{0}\abs{\fz-\fr_{m}})^{2}.},
	\end{equation}
which completes the proof.
\end{proof}

\begin{remark}\label{Remark4}
The comparison of \eref{Structure_case2} and \eref{StructureKM_case2} shows that the alternative DSM and normalized Kirchhoff migration are identical when the number of incident fields becomes sufficiently large. Furthermore, for a single impinging direction, DSM can be regarded as normalized Kirchhoff migration since $|\rme^{\rmi k_{0}\fd\cdot\fr_m}|=|\rme^{\rmi k_{0}\fd\cdot(\fr_{m}-\fz)}|\equiv1$, $\mathcal{I}_{\mathrm{DSM}}(\fz)$ \eref{Structure_case1} can then be rewritten as
\begin{equation}
\mathcal{I}_{\mathrm{DSM}}(\fz)=\f{\abs{\overline{\fW}_{1}(\fz)^{T}\mathbb{K}}}{\displaystyle\max_{\fz\in\Omega_\Gamma}\abs{\overline{\fW}_{1}(\fz)^{T}\mathbb{K}}}=\f{\abs{\overline{\fW}_{1}(\fz)^{T}\mathbb{K}\overline{\fW}_{2}(\fz)}}{\displaystyle\max_{\fz\in\Omega_\Gamma}\abs{\overline{\fW}_{1}(\fz)^{T}\mathbb{K}\overline{\fW}_{2}(\fz)}}=\mathcal{I}_{\mathrm{NKM}}(\fz).
\end{equation}
where $\fW_{1}(\fz)$ and $\fW_{2}(\fz)$ are defined  \eref{testvectors}. 

In summary the relationship between $\mathcal{I}_{\mathrm{NKM}}(\fz)$, $\mathcal{I}_{\mathrm{DSM}}(\fz)$ and $\mathcal{I}_{\mathrm{DSMA}}(\fz)$ is given by
\begin{equation}\label{Comparison}
\mathcal{I}_{\mathrm{NKM}}(\fz)=\left\{\begin{array}{lll}
\medskip\mathcal{I}_{\mathrm{DSM}}(\fz) \left(=\mathcal{I}_{\mathrm{DSMA}}(\fz)\right)&\mbox{when}&L=1\\
\mathcal{I}_{\mathrm{DSMA}}(\fz)&\mbox{when}&L\geq 2.\end{array}\right.
\end{equation}
\end{remark}

\section{Numerical experiments}\label{sec:6}
In this section, some numerical experiments are provided in order to support our theoretical proposal. Throughout this section, the applied wavenumber $k_0$ is of the form $k_0=2\pi/\lambda$ with $\lambda=\SI{0.4}{\meter}$, the measurement curve $\Gamma$ is chosen as the circle with radius $7.5\lambda=\SI{3}{\meter}$ centered at the origin, and the total number of measurement points is set to $N=36$. The search domain $\Omega_\Gamma$ is a square of side length $3\lambda\cmmnt{(=\SI{1.2}{\meter})}$ divided into squares ot equal side $h=0.612\lam=\SI{0.0245}{\meter}$.

The scattered fields  $u^{s}(\fx_n,\fd_{l})$ due to planar incident waves are generated by \textit{FEKO} (EM simulation software) and a $20$-dB white Gaussian random noise is added using the MATLAB function \texttt{awgn}.

In order to compare the accuracy of the results as objectively as possible  the use of  the Jaccard index \cite{jaccard} which measures the similarity between two finite samples sets A and B is proposed. It is defined as
\begin{equation}\label{jaccard}
 J(A,B) (\%):=\f{|A \cap B|}{|A\cup B|}\times100.
\end{equation}
In our case the Jaccard index is calculated by comparing $\mathcal{I}^{\kappa}_{\text{exact}}\left(\fz\right)$  with various index maps $\mathcal{I}^{\kappa}\left(\fz\right)$ defined as  
\begin{equation}
\label{cases}
\mathcal{I}^{\kappa}\left(\fz\right)=\cases{\mathcal{I}\left(\fz\right)&$\forall \fz$  such that  $\mathcal{I}\left(\fz\right) \ge \kappa$\\
0 & $\forall \fz$  such that  $\mathcal{I}\left(\fz\right) < \kappa$\\}
\end{equation}
where $\kappa$ varies from $0$ to $1$ and where $\mathcal{I}\left(\fz\right)$ can be $\mathcal{I}_{\mathrm{DSM}}(\fz)$, $\mathcal{I}_{\mathrm{DSMA}}(\fz)$ or $\mathcal{I}_{\mathrm{NKM}}(\fz)$ and where  $\mathcal{I}_{\text{exact}}\left(\fz\right)$ is defined as
\begin{equation}
\mathcal{I}_{\text{exact}}\left(\fz\right) =\frac{\abs{k(\fz) - k_{0}}}{\max{\abs{k(\fz) - k_{0}}}}
\end{equation}

\begin{example}[Small disks with the same radius and permittivity]\label{Example1}
First, we consider small dielectric disks $\tau_m$, $m=1,2,3$. The locations $\fr_m$ of $\tau_m$ are
selected as $\fr_1=(0.75\lambda,-0.75\lambda)=(\SI{0.3}{\meter},\SI{-0.3}{\meter})$, $\fr_2=(-\lambda,-0.5\lambda)=(\SI{-0.4}{\meter},\SI{-0.2}{\meter})$, and $\fr_3=(-0.75\lambda,\lambda)=(\SI{-0.3}{\meter},\SI{0.4}{\meter})$.
In this example, we consider the identification of $\tau_m$ with constant radius and permittivity $\alpha_m\equiv0.075\lambda=\SI{0.03}{\meter}$ and $\eps_m\equiv5\eps_0$, respectively. 
\end{example}
Figure~\ref{Result1} shows the map of $\mathcal{I}_{\mathrm{DSM}}(\fx)$ for a single incident wave with $\fd=(-1,0)$. As shown by the previous results \cite{dsm2d_ito1} and the discussion in Remark~\ref{Remark2}, the locations of each inhomogeneity $\tau_m$ are identified even though the Jaccard index has not high value. It can be explain by the fact that a lot of artefacts are present in the image and a high $\kappa$ threshold is needed to better identified the localization of the defects.   

Then the imaging performance of
$\mathcal{I}_{\mathrm{DSM}}(\fz)$  and
$\mathcal{I}_{\mathrm{DSMA}}(\fz)$  is compared as a function of  the number of incident fields $L$ (Figure~\ref{Result1} with $L=1$, $2$, $12$, and $36$). As stated in Remark~\ref{Remark3} and confirmed by the comparison of the Jaccard index, $\mathcal{I}_{\mathrm{DSMA}}(\fz)$ is an improved version of $\mathcal{I}_{\mathrm{DSM}}(\fz)$.
\begin{figure}[h]
	\centering
	\subfigure[Map of $\mathcal{I}_{\mathrm{DSM}}(\fz)$ for $L=1$]{\label{Result1-1}\centering\includegraphics[width=0.325\textwidth]{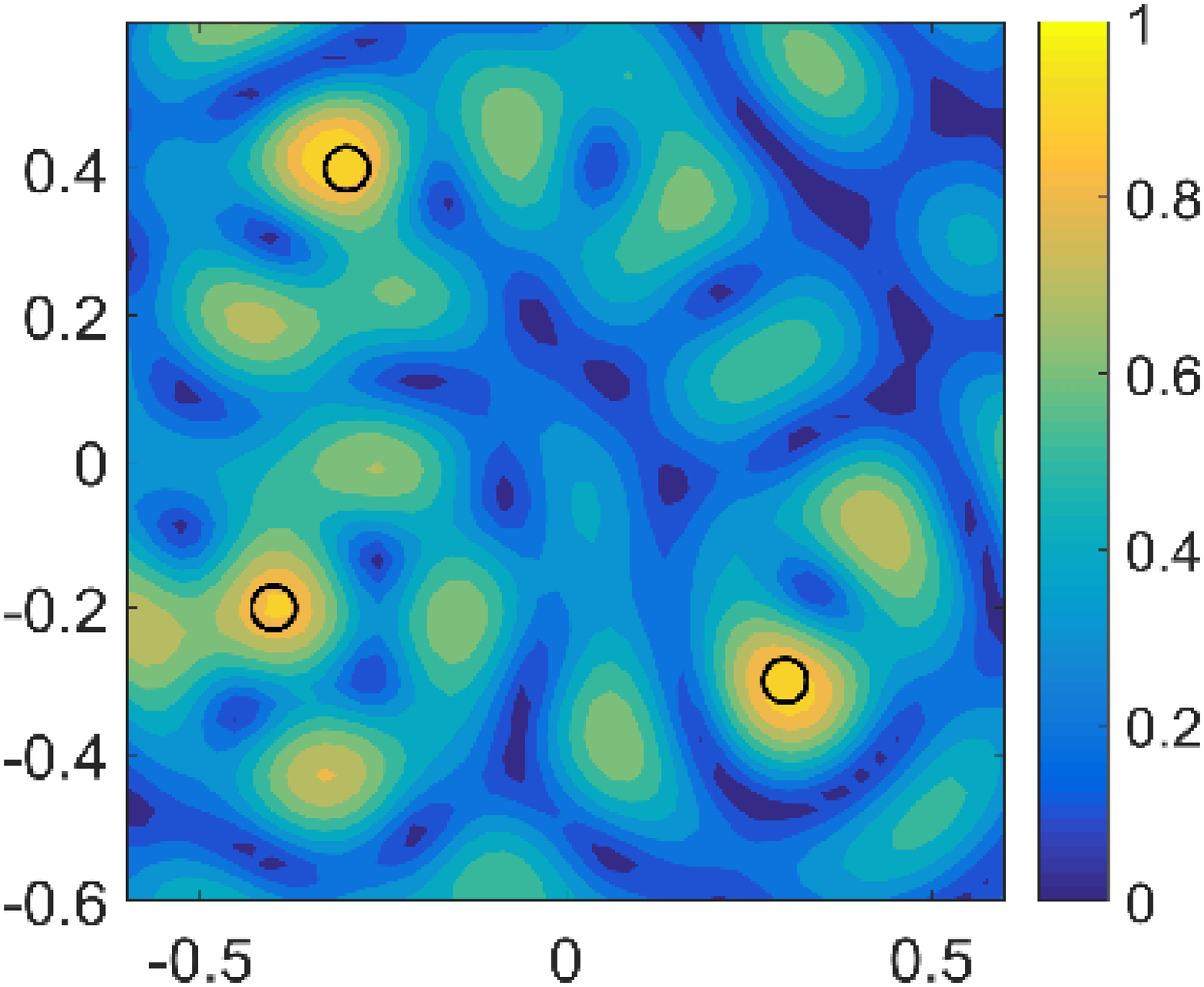}}
	\subfigure[Map of $\mathcal{I}_{\mathrm{DSMA}}(\fz)$ for $L=1$]{\label{Result1-2}\centering\includegraphics[width=0.325\textwidth]{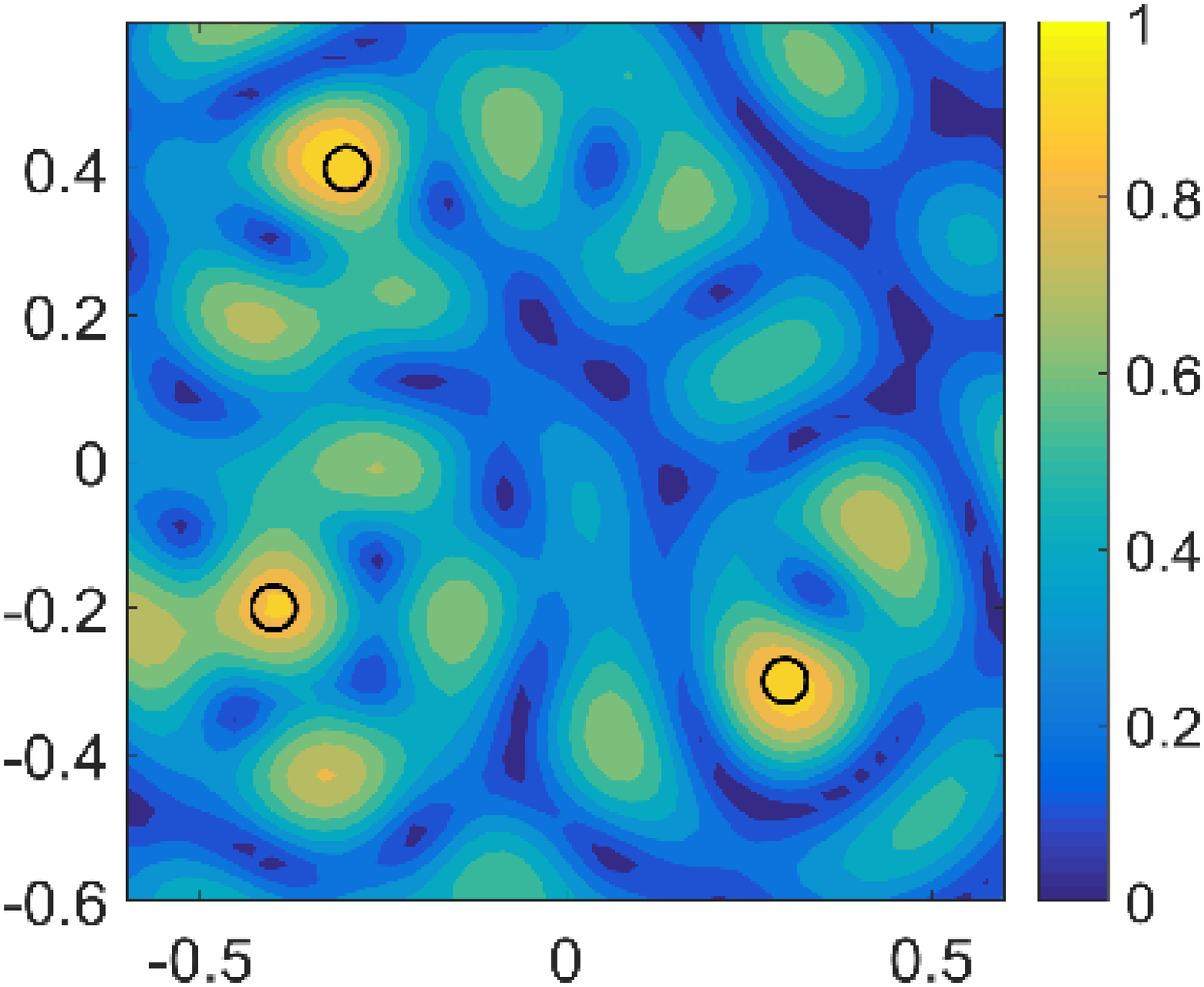}}
	\subfigure[Jaccard index for $L=1$]{\label{Result1-3}\centering\includegraphics[width=0.325\textwidth]{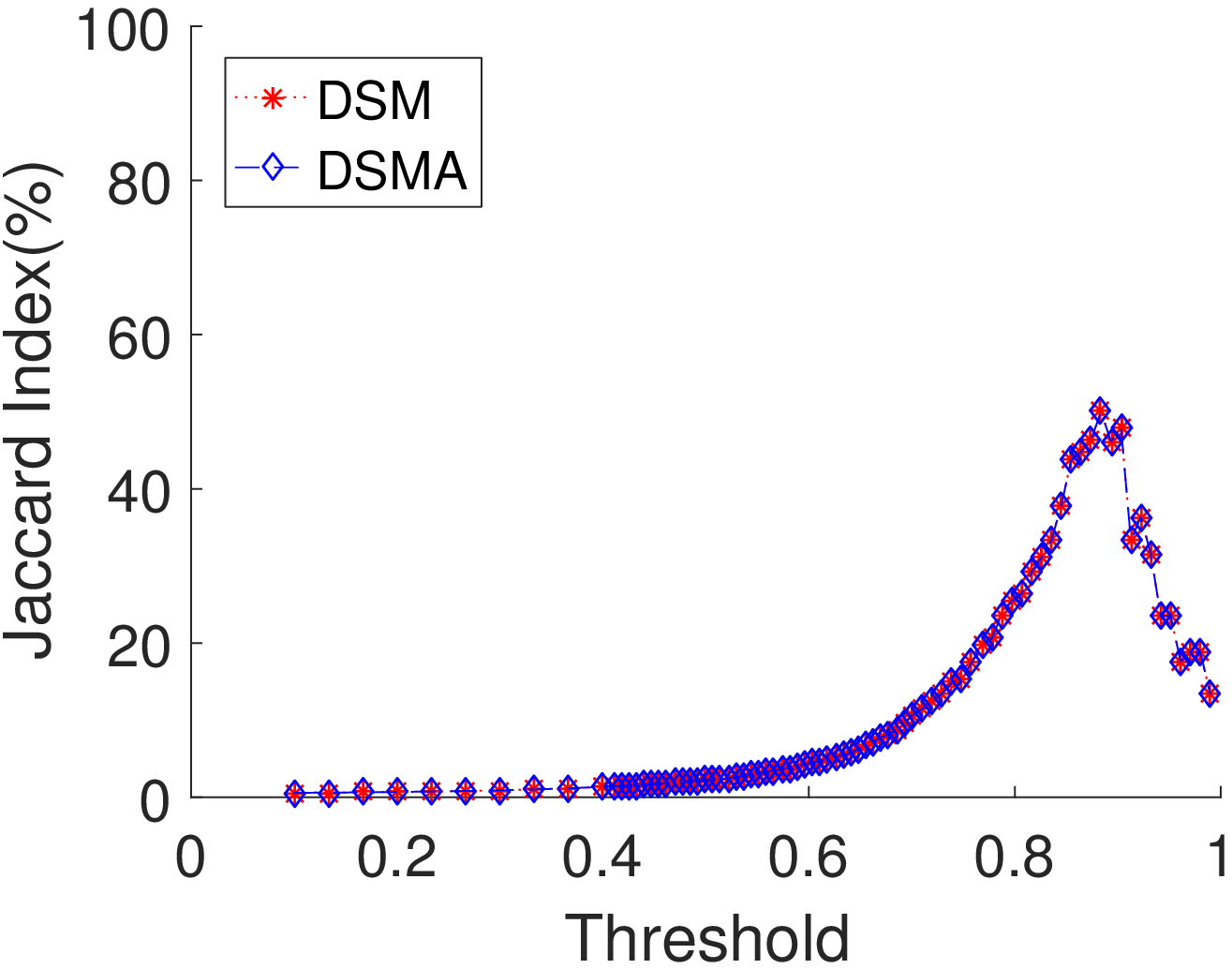}}
	
	\subfigure[Map of $\mathcal{I}_{\mathrm{DSM}}(\fz)$ for $L=2$]{\label{Result1-4}\centering\includegraphics[width=0.325\textwidth]{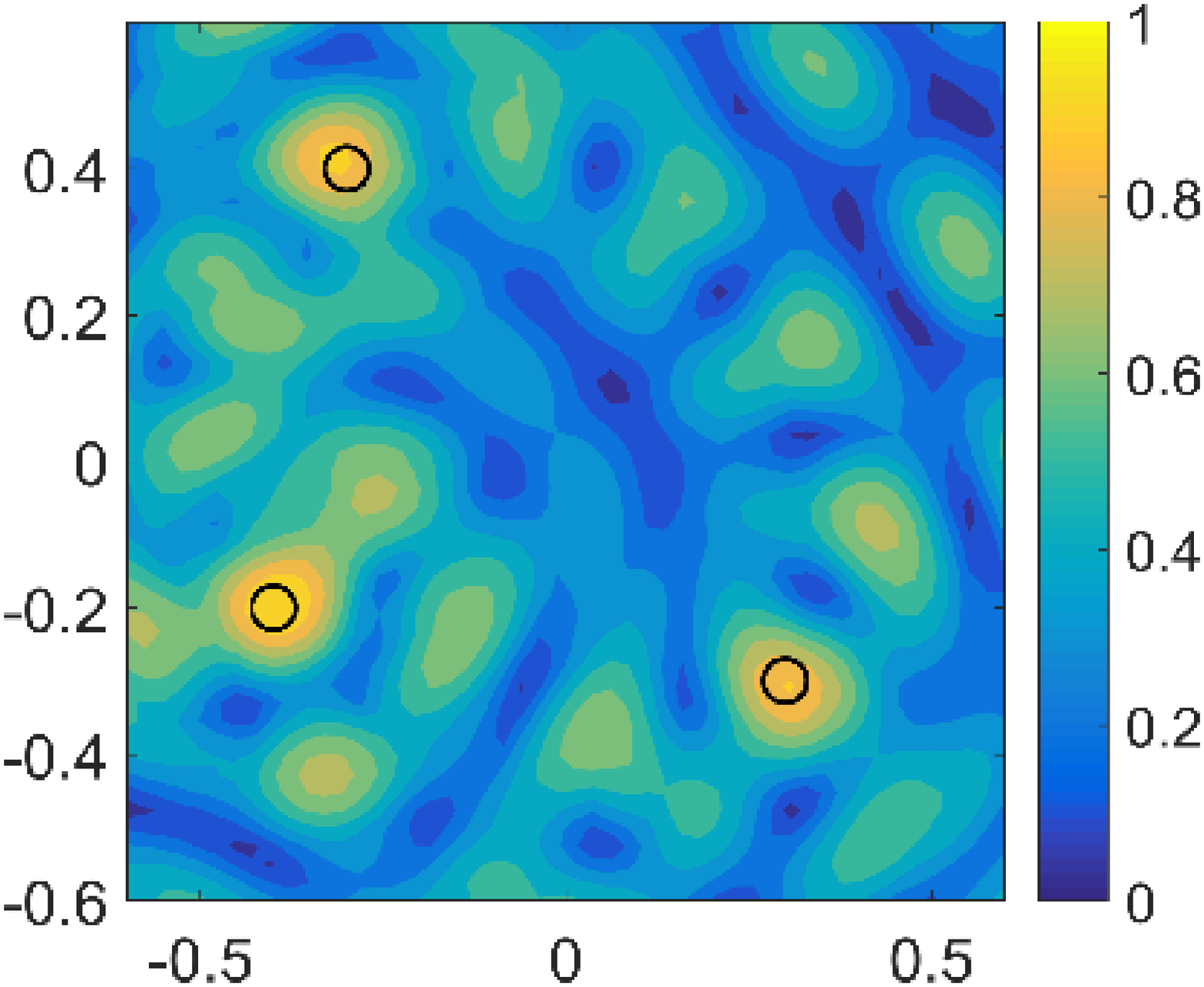}}
	\subfigure[Map of $\mathcal{I}_{\mathrm{DSMA}}(\fz)$ for  $L=2$]{\label{Result1-5}\centering\includegraphics[width=0.325\textwidth]{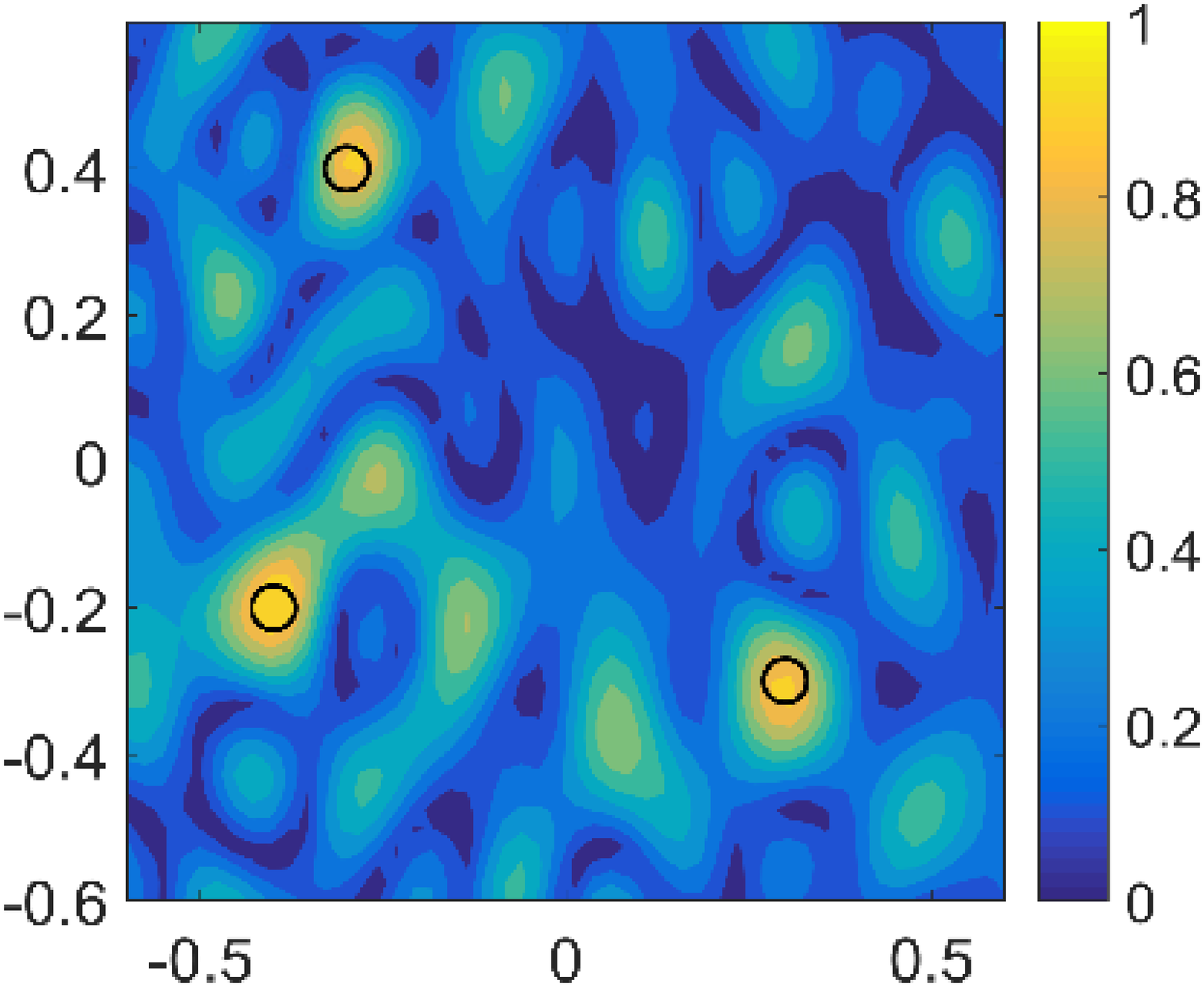}}
	\subfigure[Jaccard index for $L=2$]{\label{Result1-6}\centering\includegraphics[width=0.325\textwidth]{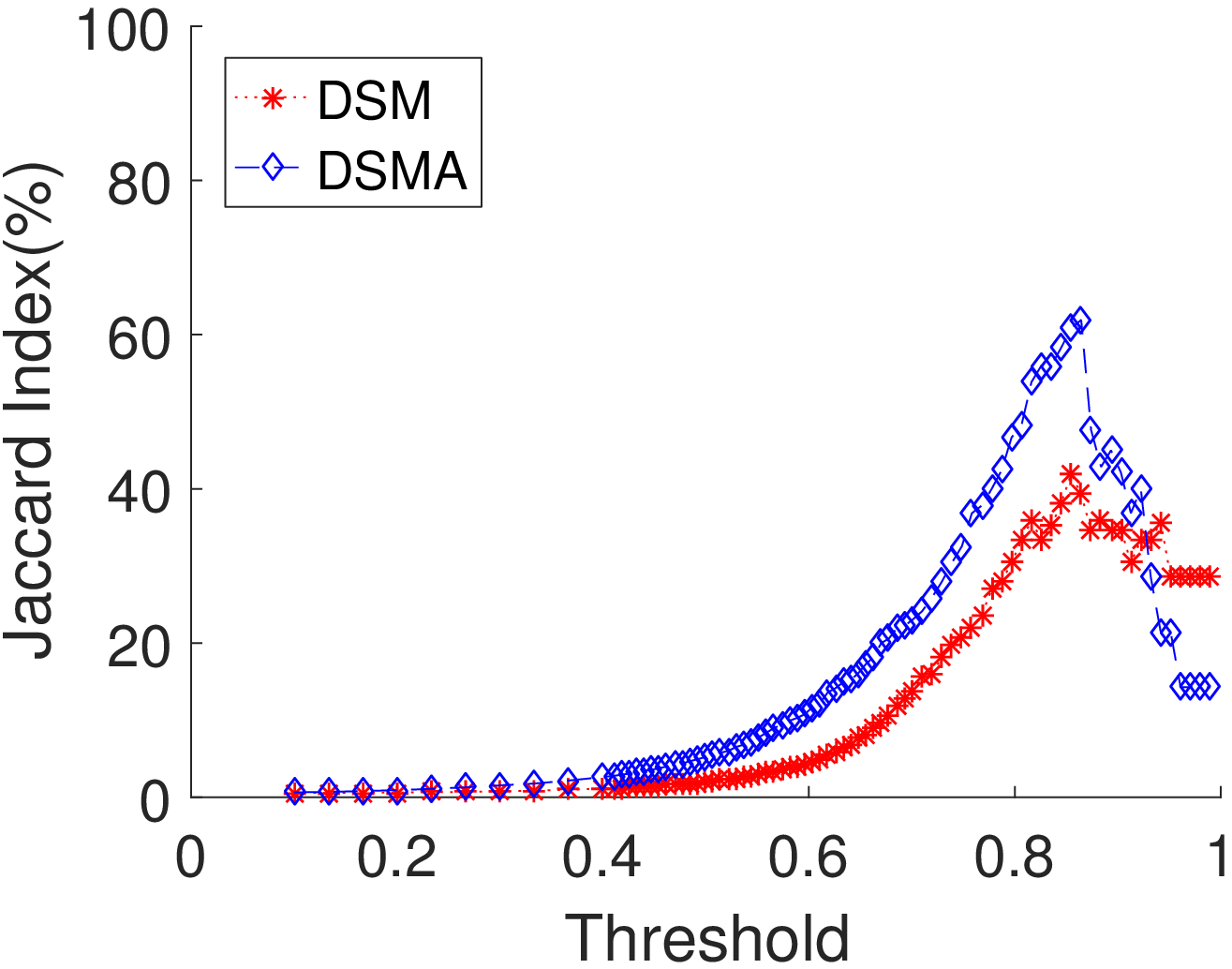}}
	
	\subfigure[Map of $\mathcal{I}_{\mathrm{DSM}}(\fz)$ for $L=12$]{\label{Result1-7}\centering\includegraphics[width=0.325\textwidth]{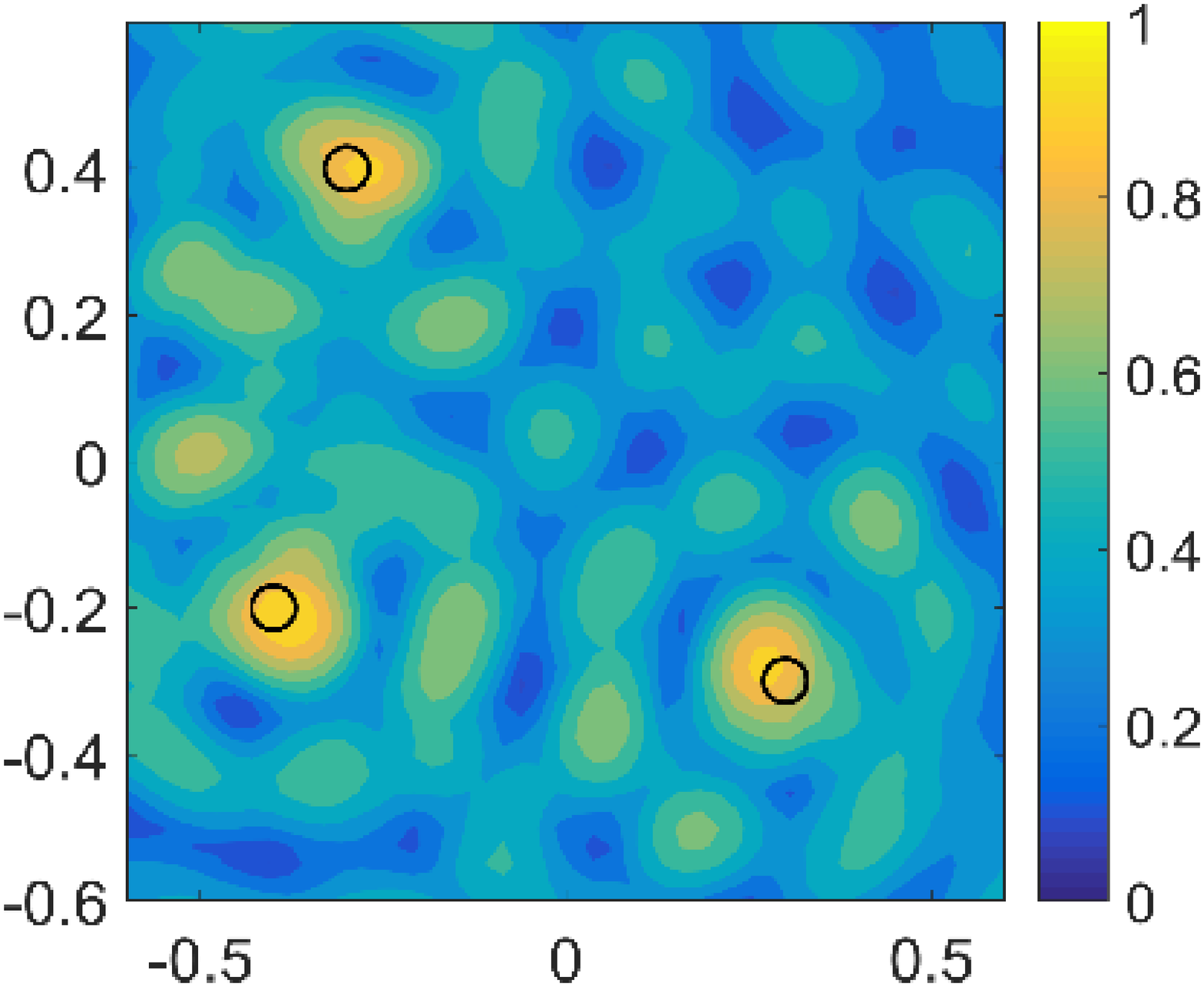}}
	\subfigure[Map of $\mathcal{I}_{\mathrm{DSMA}}(\fz)$ for $L=12$]{\label{Result1-8}\centering\includegraphics[width=0.325\textwidth]{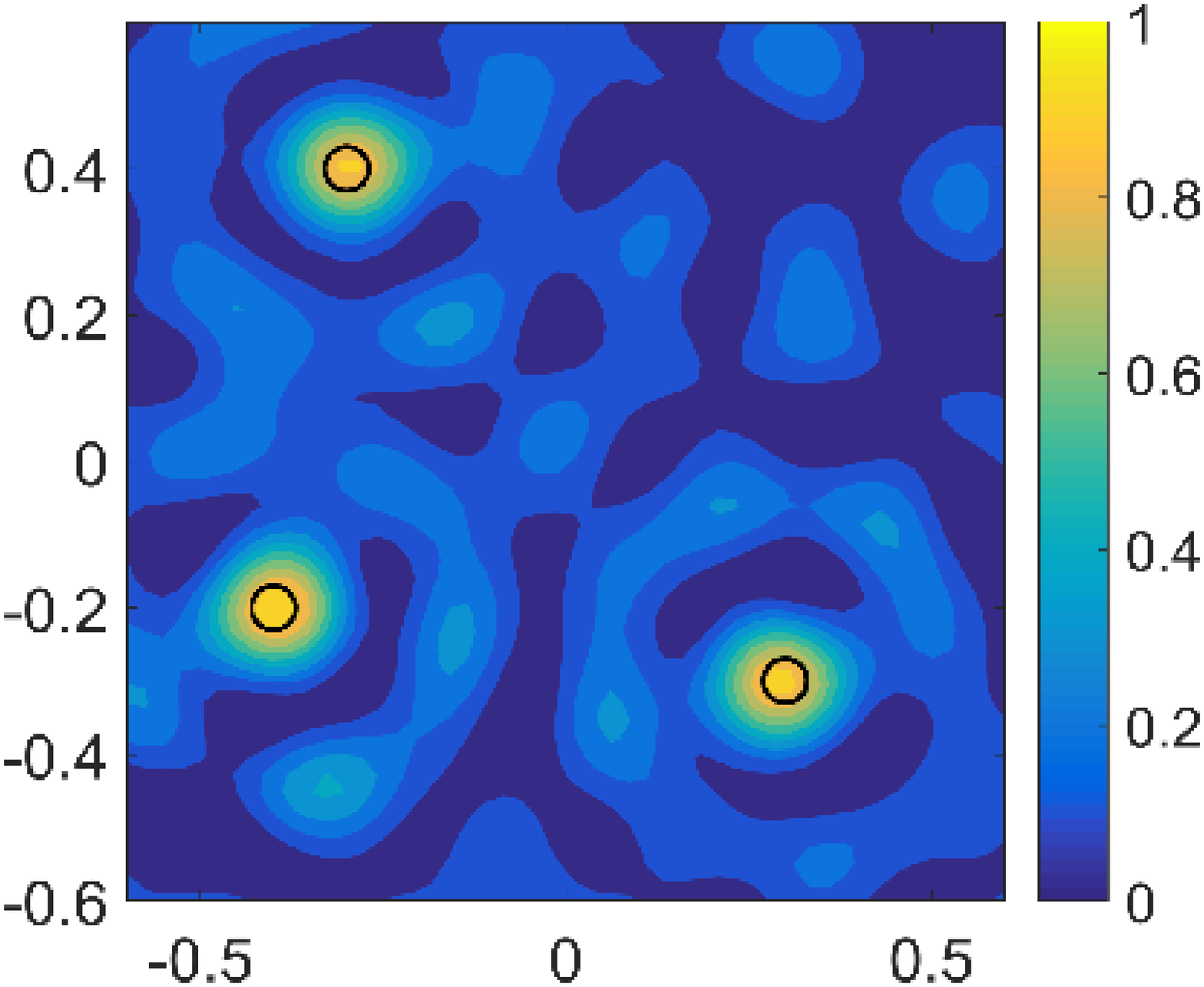}}
	\subfigure[Jaccard index for $L=12$]{\label{Result1-9}\centering\includegraphics[width=0.325\textwidth]{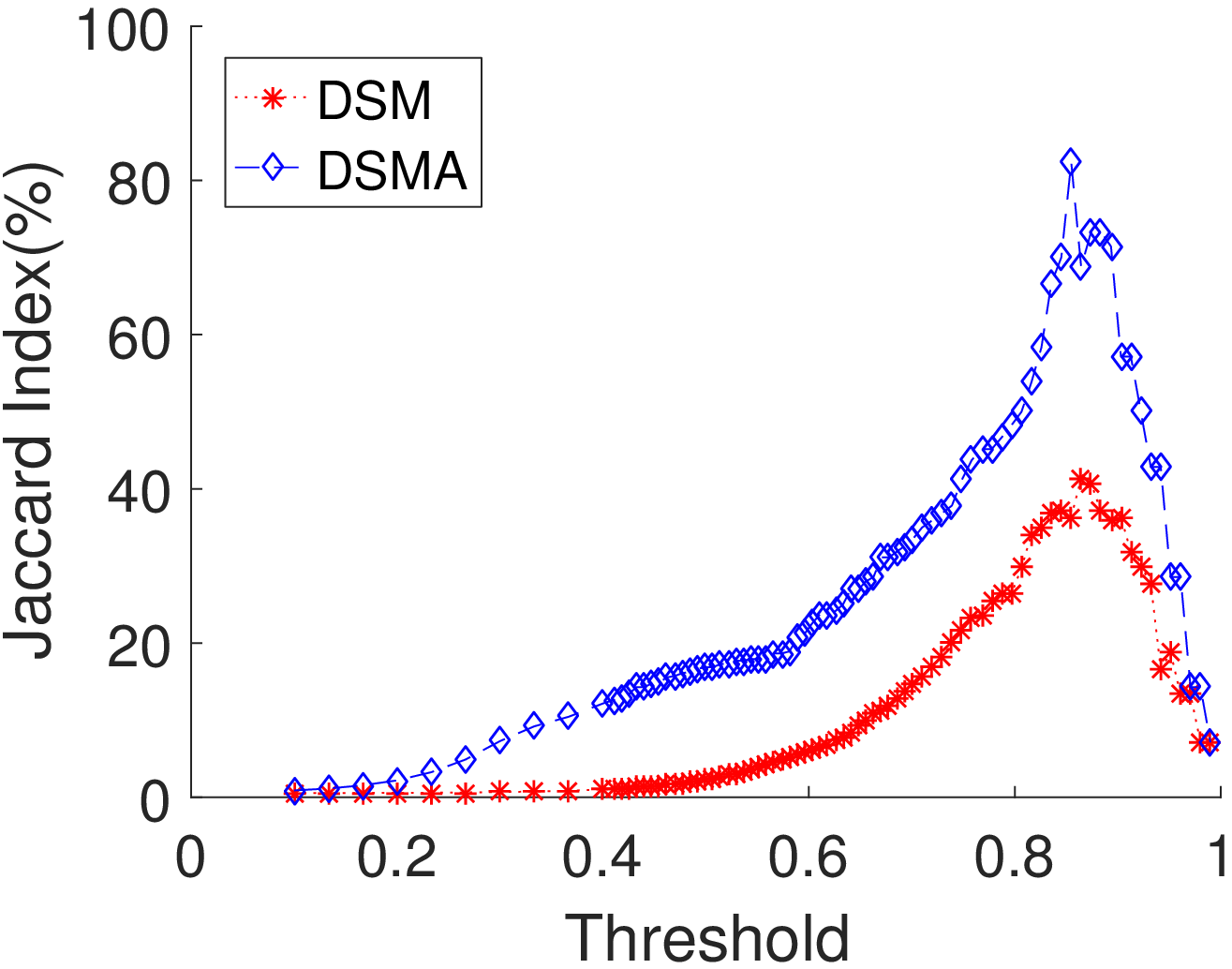}}
	
	\subfigure[Map of $\mathcal{I}_{\mathrm{DSM}}(\fz)$ for $L=36$]{\label{Result1-10}\centering\includegraphics[width=0.325\textwidth]{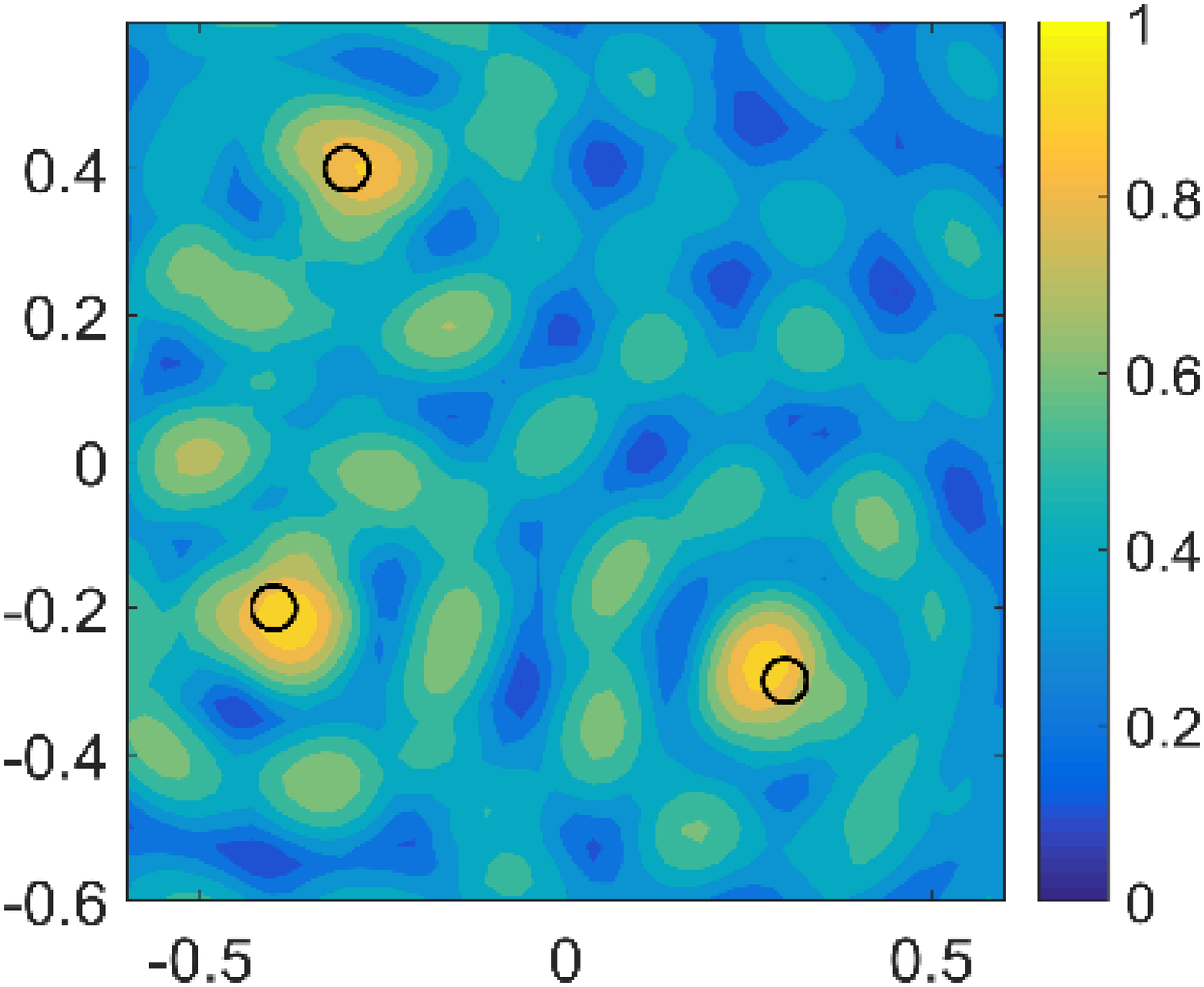}}
	\subfigure[Map of $\mathcal{I}_{\mathrm{DSMA}}(\fz)$ for $L=36$]{\label{Result1-11}\centering\includegraphics[width=0.325\textwidth]{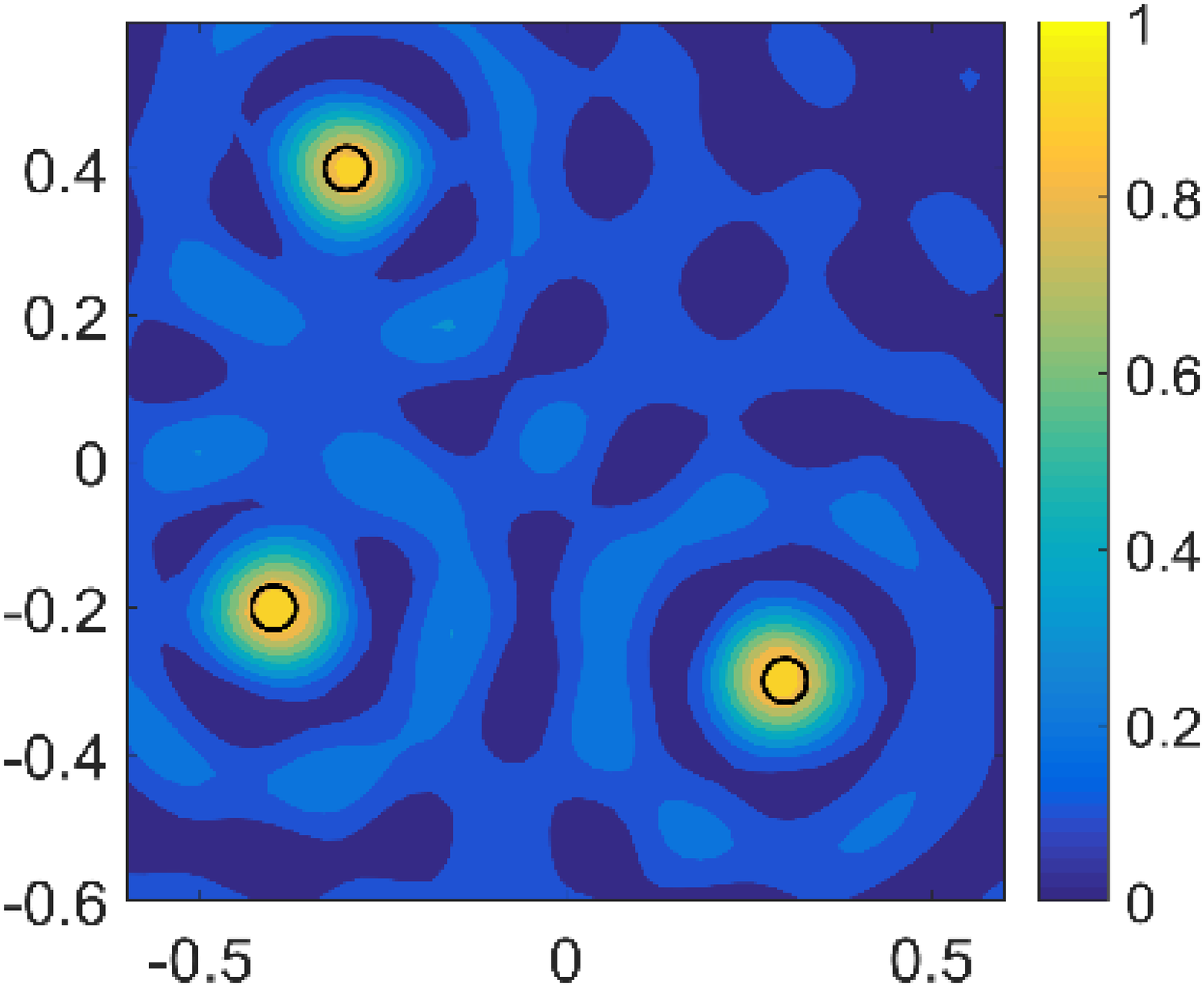}}
	\subfigure[Jaccard index for $L=36$]{\label{Result1-12}\centering\includegraphics[width=0.325\textwidth]{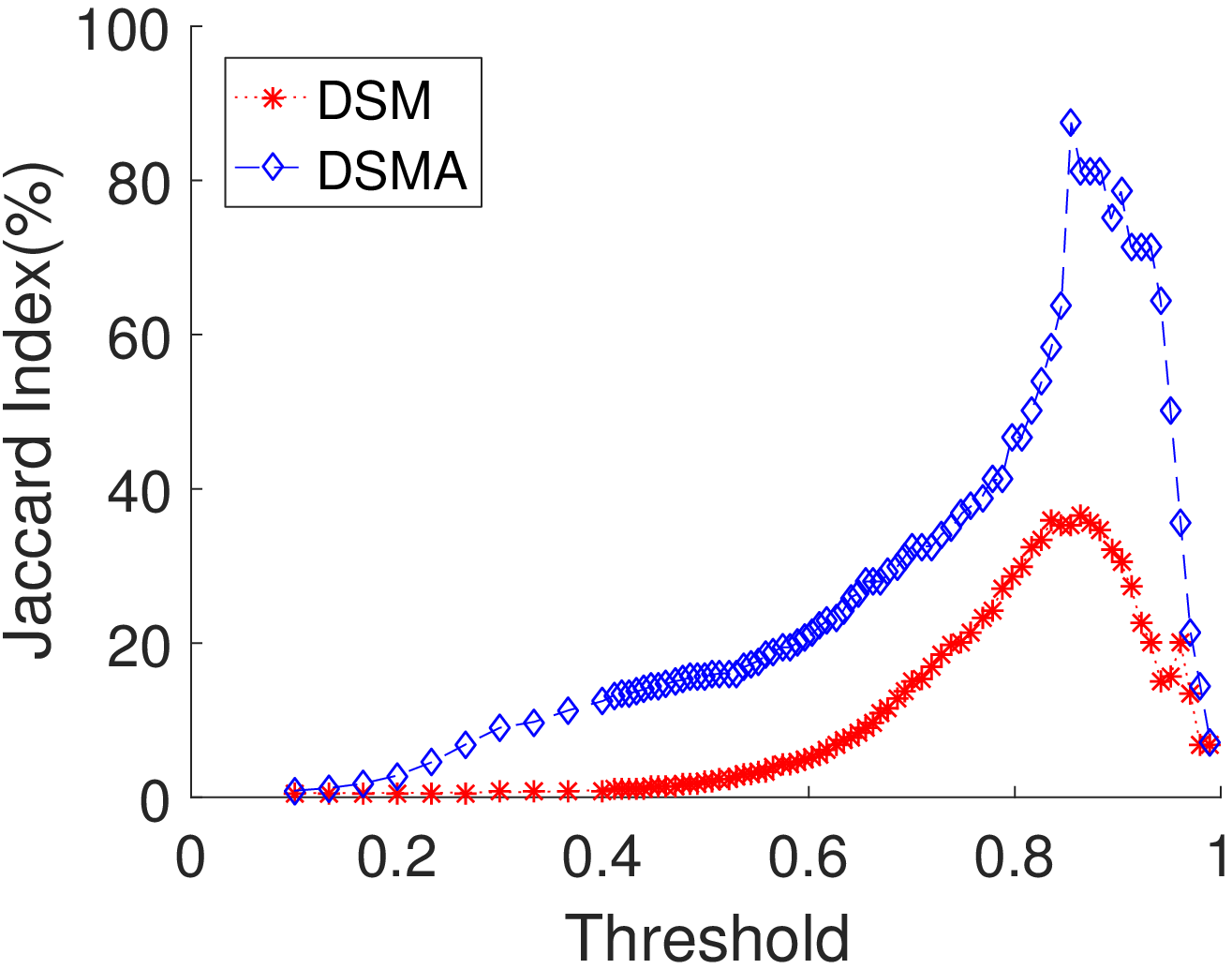}}
	\caption{\label{Result1}(Example~\ref{Example1}) Map of $\mathcal{I}_{\mathrm{DSM}}(\fz)$ (left column) $\mathcal{I}_{\mathrm{DSMA}}(\fz)$ (center column), and Jaccard index (right column).}
\end{figure}

Hereafter Remark~\ref{Remark4}  is verified by comparing $\mathcal{I}_{\mathrm{DSMA}}(\fz)$, and  $\mathcal{I}_{\mathrm{NKM}}(\fz)$ (Figure~\ref{Result2}),  only the maps and the corresponding Jaccard indexes for $L=1$ and $L=36$ incident fields being presented for brevety.  As expected, the maps of $\mathcal{I}_{\mathrm{DSMA}}(\fz)$ and $\mathcal{I}_{\mathrm{NKM}}(\fz)$ and their corresponding Jaccard index are identical whatever the number of incidences. From now on only the Jaccard index of $\mathcal{I}_{\mathrm{NKM}}(\fz)$ will be provided.
\begin{figure}[h]
	\centering
	\subfigure[Map of $\mathcal{I}_{\mathrm{DSMA}}(\fz)$ $L=1$]{\label{Result2-1}\centering\includegraphics[width=0.325\textwidth]{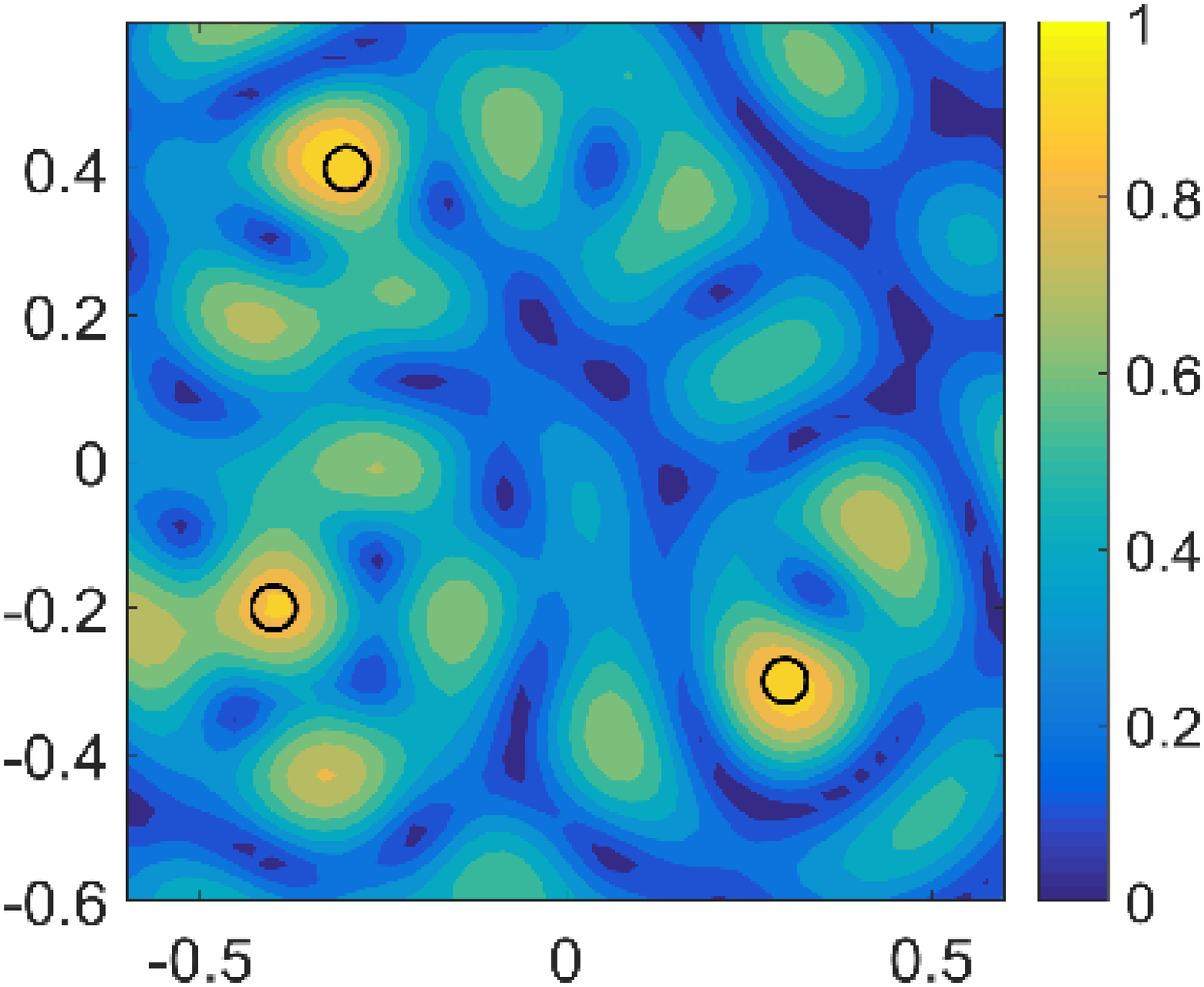}}
	\subfigure[Map of $\mathcal{I}_{\mathrm{NKM}}(\fz)$ for $L=1$]{\label{Result2-2}\centering\includegraphics[width=0.325\textwidth]{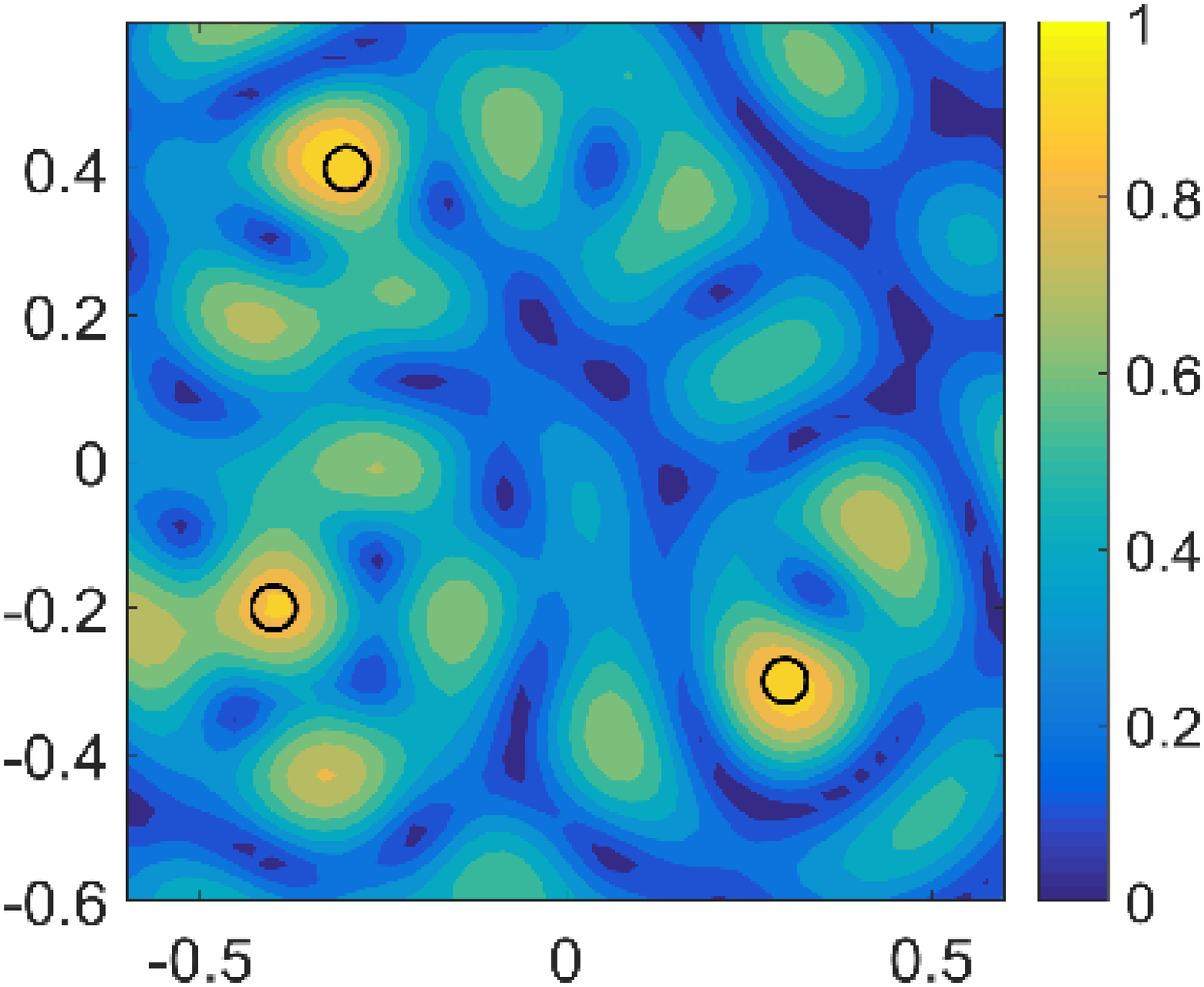}}
	\subfigure[Jaccard index for $L=1$]{\label{Result2-3}\centering\includegraphics[width=0.325\textwidth]{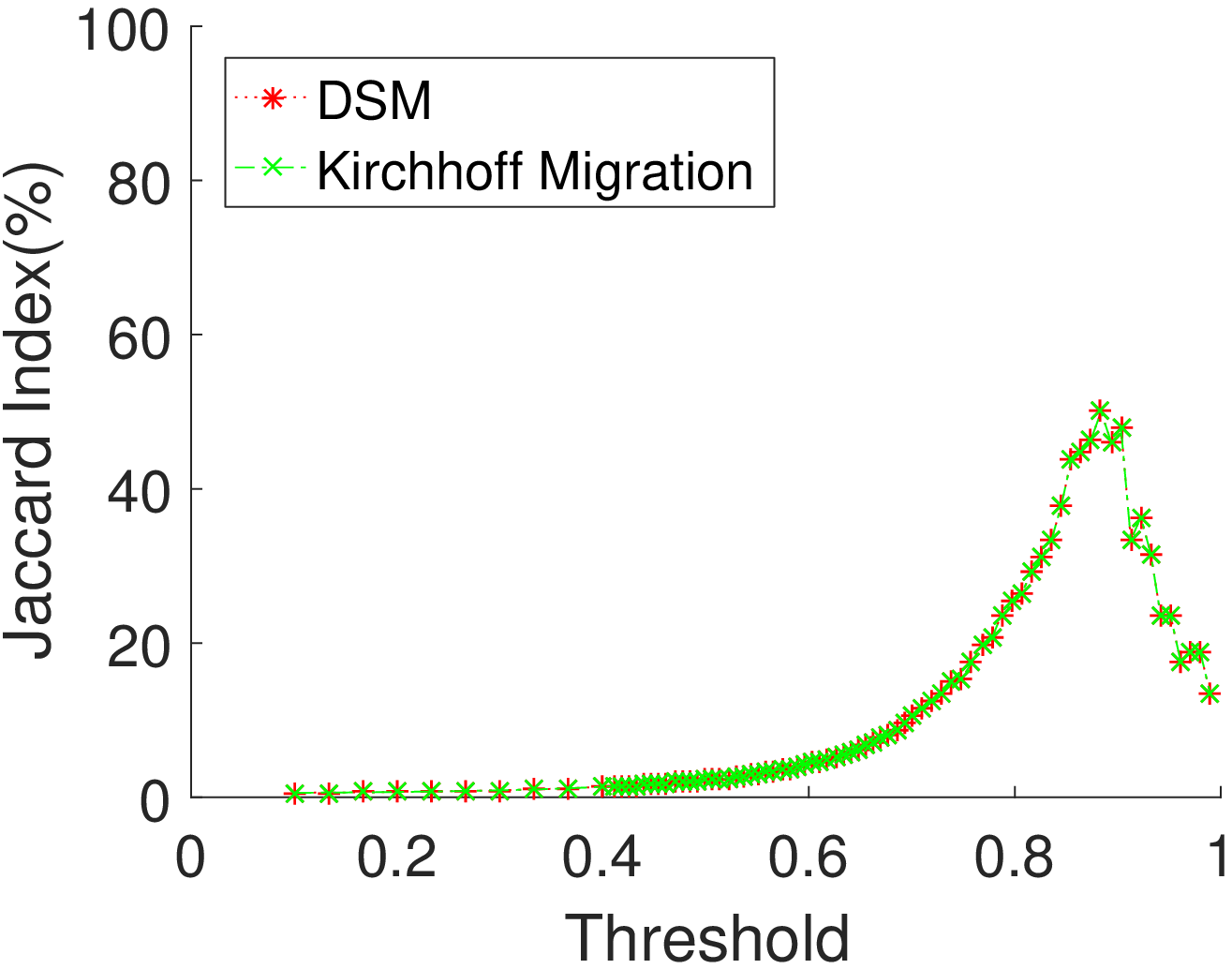}}
	
	\subfigure[Map of $\mathcal{I}_{\mathrm{DSMA}}(\fz)$ for $L=36$]{\label{Result2-4}\centering\includegraphics[width=0.325\textwidth]{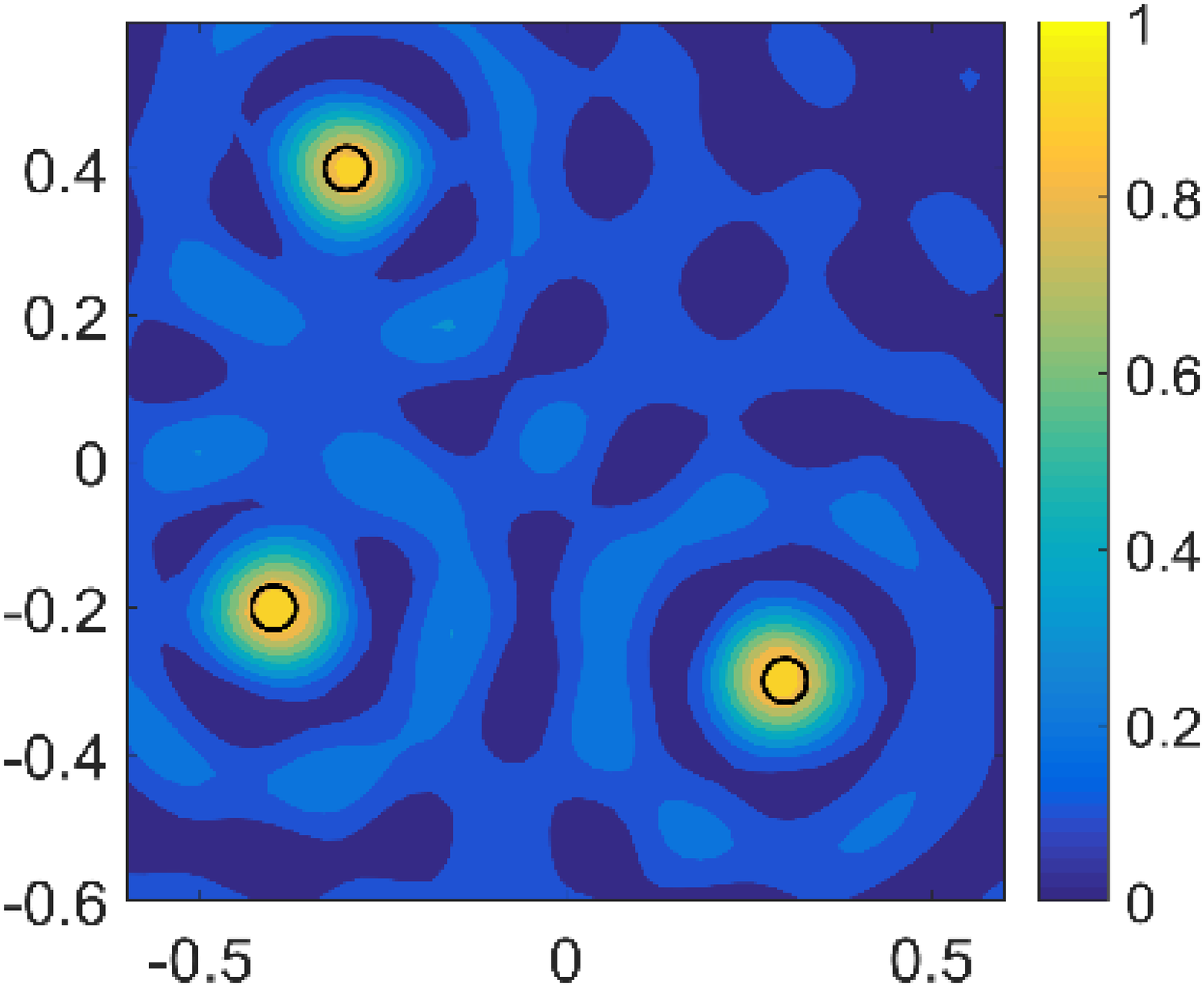}}
	\subfigure[Map of $\mathcal{I}_{\mathrm{NKM}}(\fz)$ for $L=36$]{\label{Result2-5}\centering\includegraphics[width=0.325\textwidth]{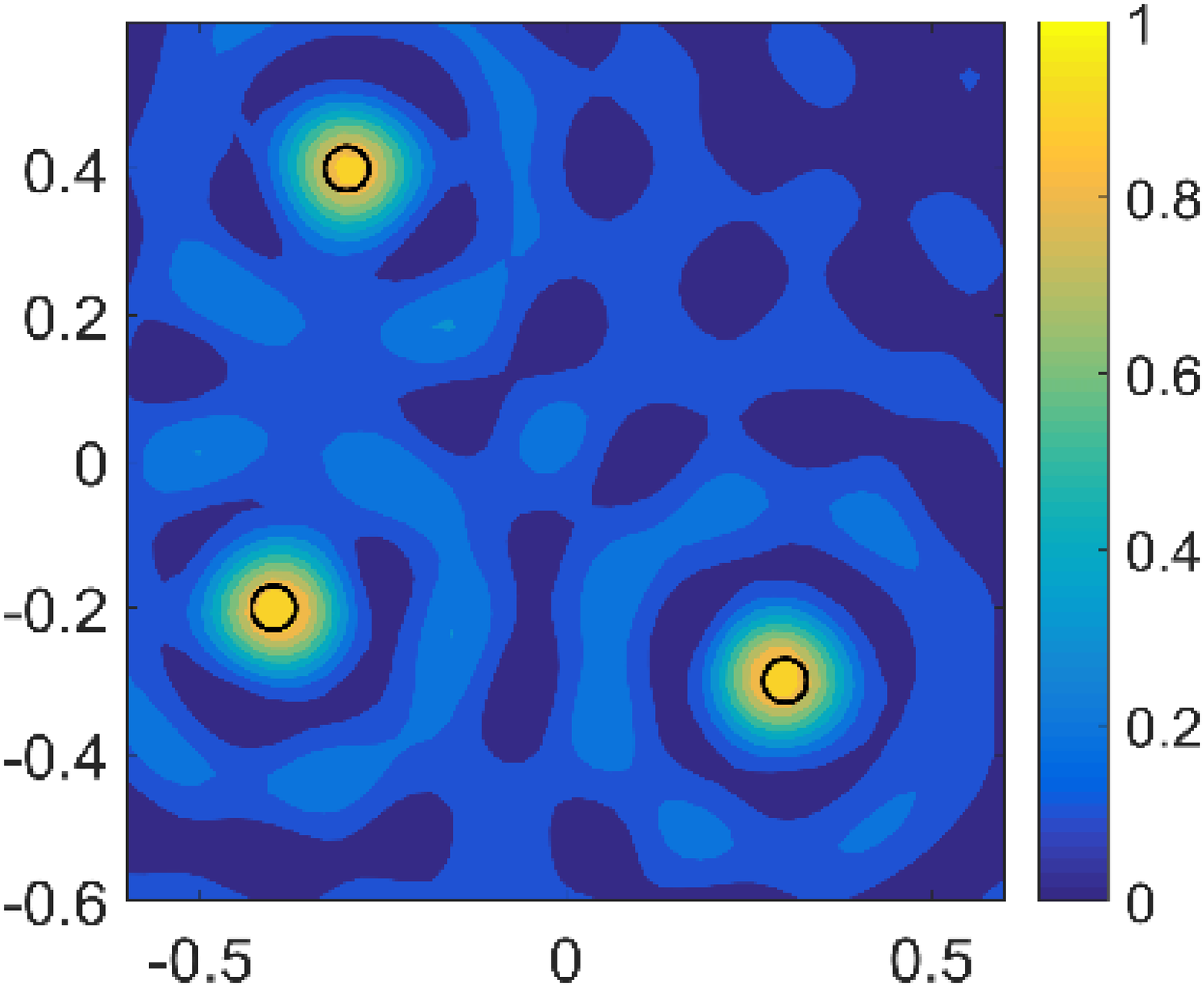}}
	\subfigure[Jaccard index for $L=36$]{\label{Result2-6}\centering\includegraphics[width=0.325\textwidth]{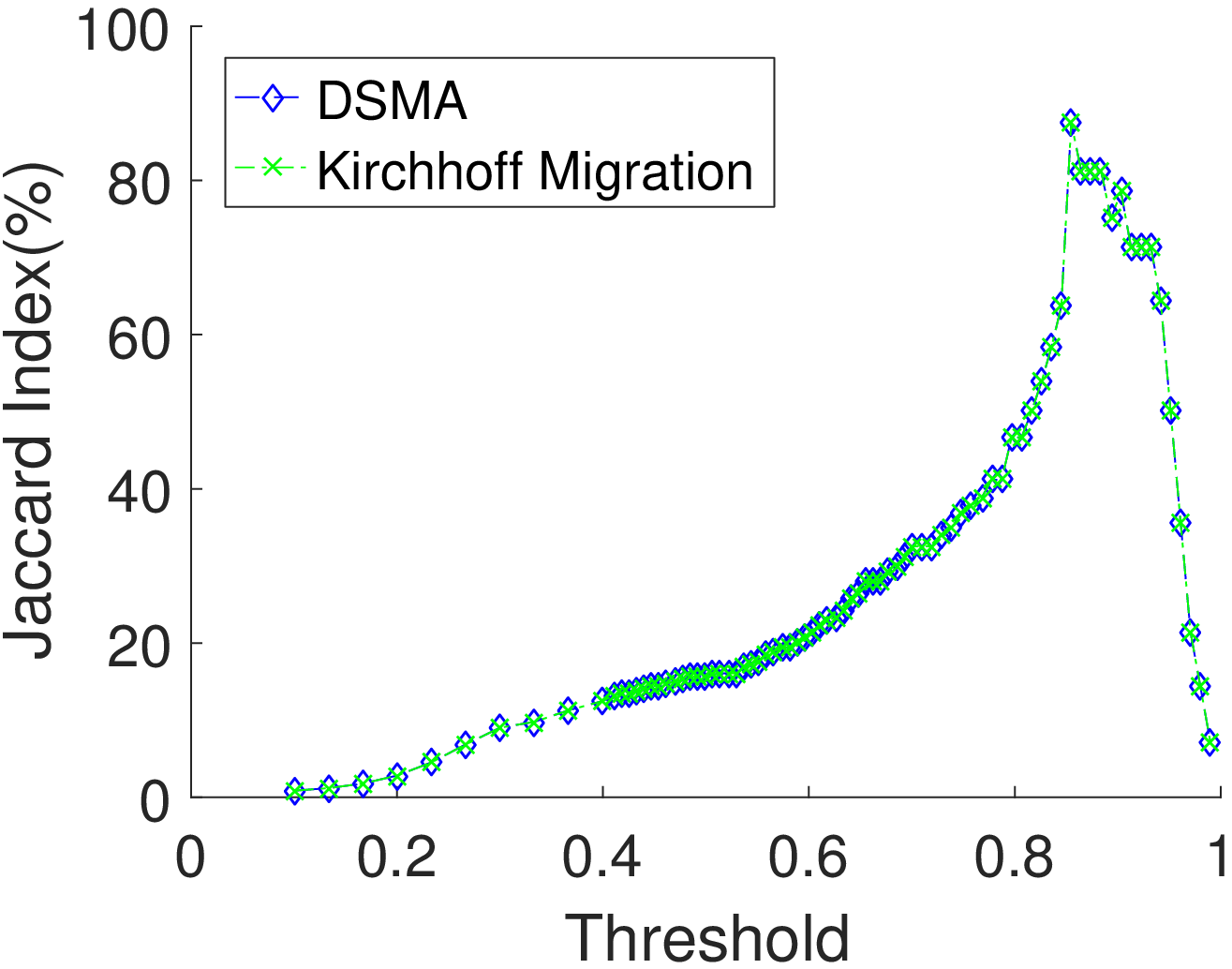}}
	
	\caption{\label{Result2}(Example~\ref{Example1}) Map of $\mathcal{I}_{\mathrm{DSMA}}(\fz)$ (first column),  $\mathcal{I}_{\mathrm{NKM}}(\fz)$ (second column), and Jaccard index (last column).}
\end{figure}

\begin{example}[Small disks with different radii but same permittivities]\label{Example2}	
Now, the imaging of $\tau_m$ with different radii but the same permittivity $\eps_m\equiv5\eps_0$ is dealt with. The values of $\alpha_m$ are $\alpha_1=0.0875\lambda=\SI{0.035}{\meter}$, $\alpha_2=0.075\lambda=\SI{0.03}{\meter}$, and $\alpha_3=0.0625\lambda=\SI{0.025}{\meter}$.
The locations $\fr_m$ of $\tau_m$ are
chosen as $\fr_1=(0.75\lambda,-0.75\lambda)=(\SI{0.3}{\meter},\SI{-0.3}{\meter})$, $\fr_2=(-\lambda,-0.5\lambda)=(\SI{-0.4}{\meter},\SI{-0.2}{\meter})$, and $\fr_3=(-0.75\lambda,\lambda)=(\SI{-0.3}{\meter},\SI{0.4}{\meter})$.
 
\end{example}
As illustrated in Figure \ref{Result3},  when using the original DSM (Figure \ref{Result3}, left column) the localization of the inhomogeneity with the largest radius $\left(\tau_1\right)$  is well identified whereas the others ($\tau_2$ and $\tau_3$) are not. Even when the number of sources is increased the localization of the inhomogeneity $\tau_{3}$ is still difficult to identify due to the presence of important artifacts in the image.  The use of DSMA (Figure \ref{Result3}, centered column) improves the quality of the image thanks to the smoothing of the artifacts. This illustrates the statement proposed in Theorem~\ref{DSM_case1} and the discussions in Remark~\ref{Remark1} and Remark \ref{Remark3}.   

\begin{figure}[h]
	\centering
	\subfigure[Map of $\mathcal{I}_{\mathrm{DSM}}(\fz)$ for $L=1$]{\label{Result3-1}\centering\includegraphics[width=0.325\textwidth]{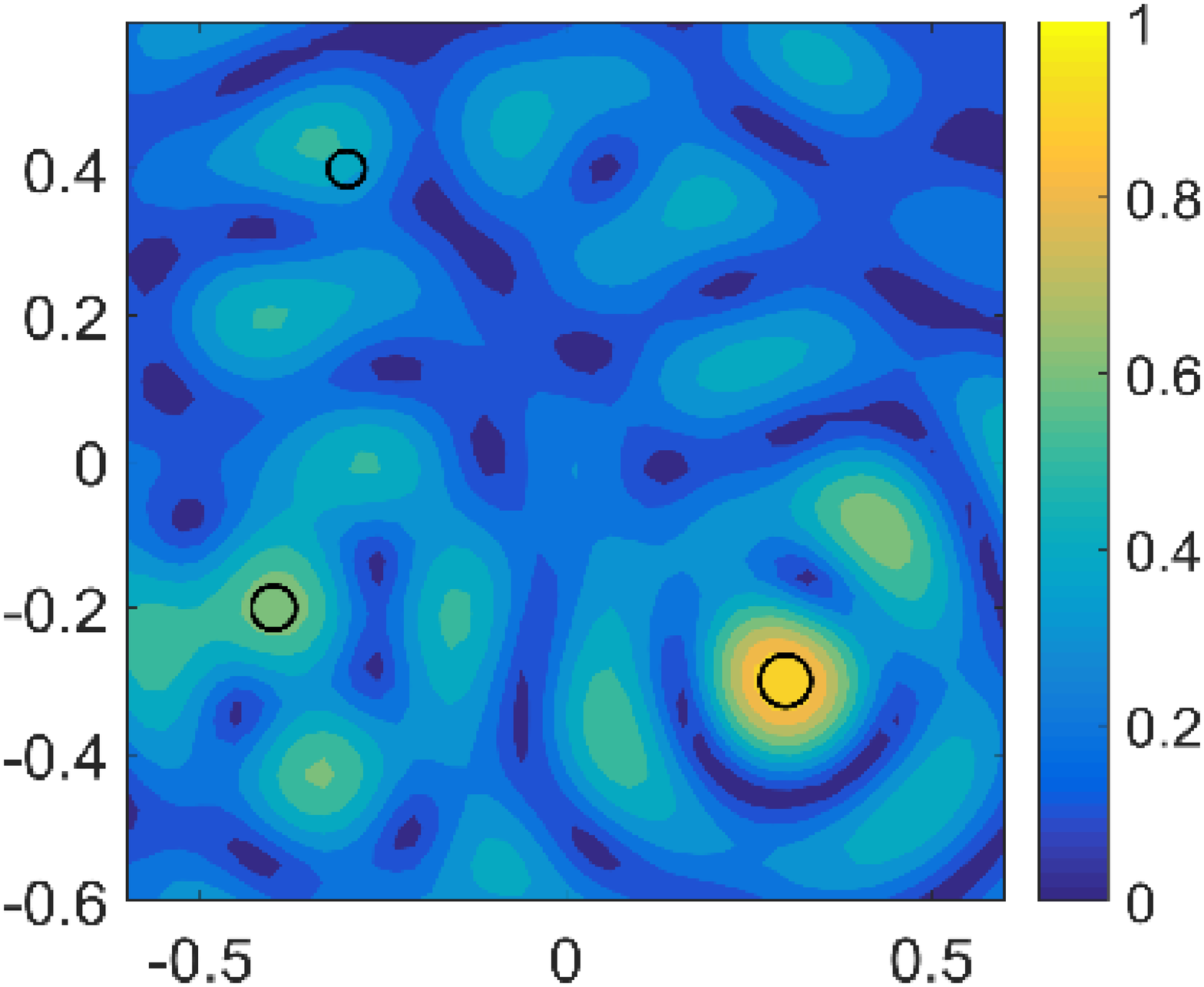}}
	\subfigure[Map of $\mathcal{I}_{\mathrm{DSMA}}(\fz)$ for $L=1$]{\label{Result3-2}\centering\includegraphics[width=0.325\textwidth]{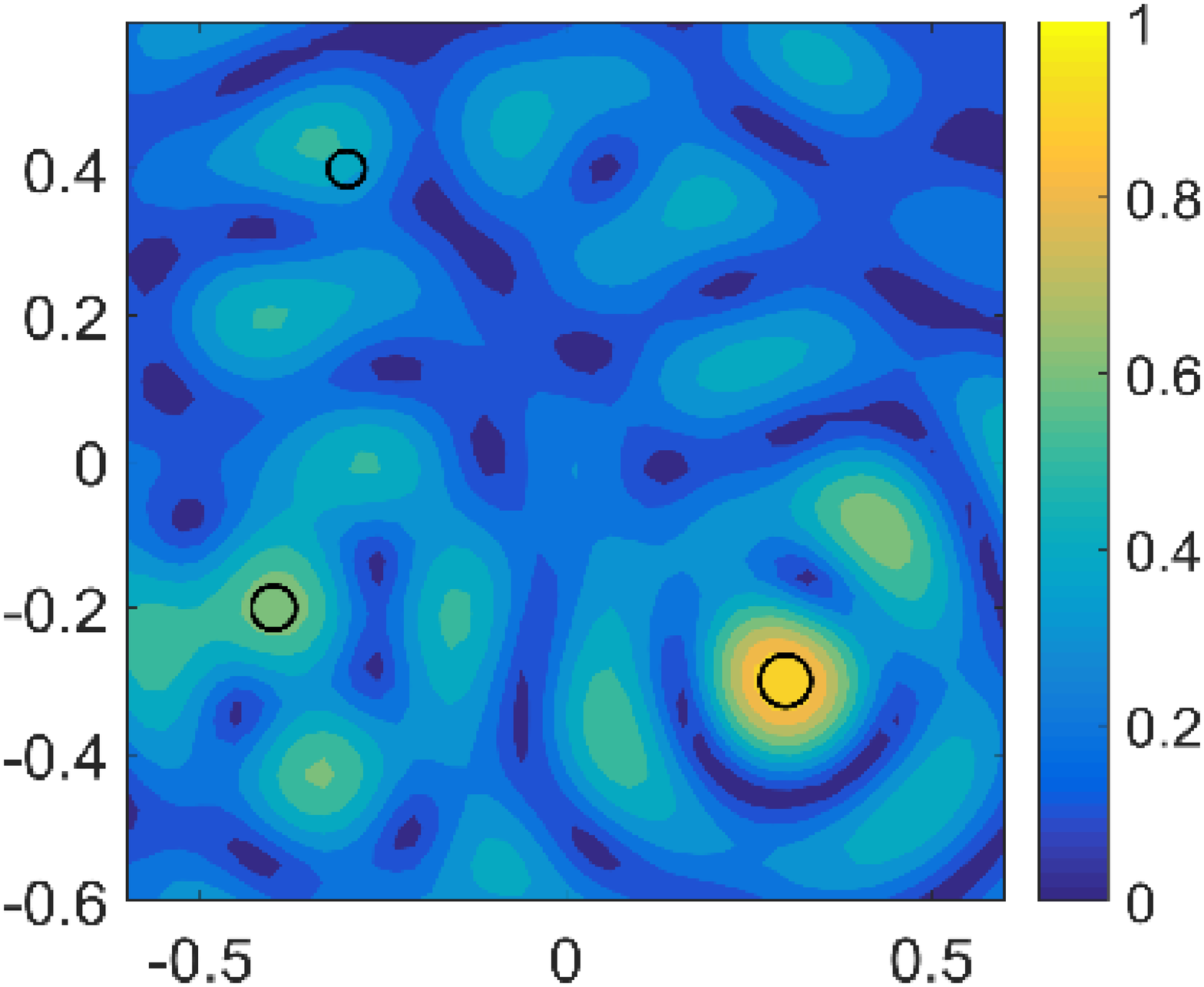}}
	\subfigure[Jaccard index for $L=1$]{\label{Result3-3}\centering\includegraphics[width=0.325\textwidth]{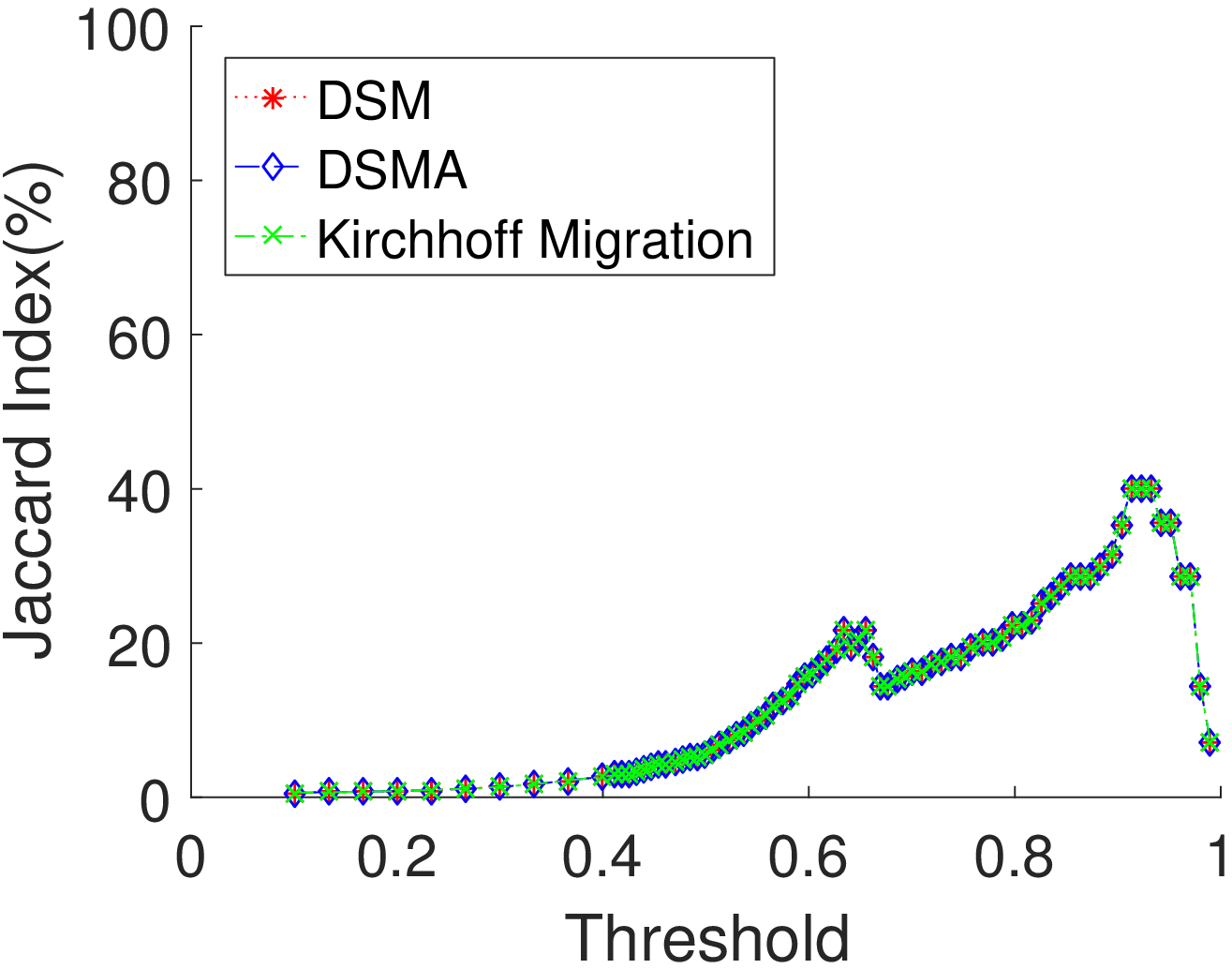}}
	
	\subfigure[Map of $\mathcal{I}_{\mathrm{DSM}}(\fz)$ for $L=2$]{\label{Result3-4}\centering\includegraphics[width=0.325\textwidth]{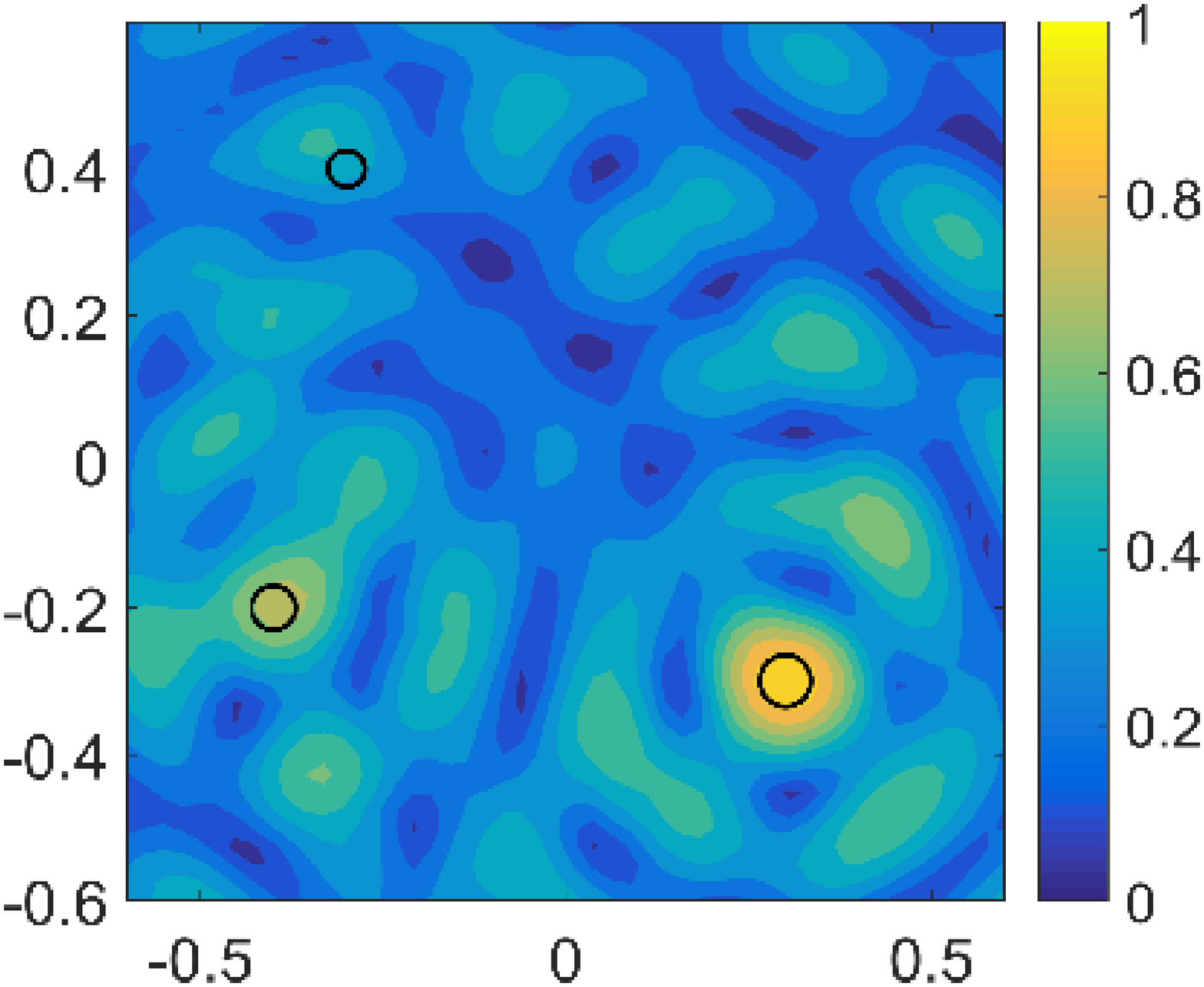}}
	\subfigure[Map of $\mathcal{I}_{\mathrm{DSMA}}(\fz)$ for  $L=2$]{\label{Result3-5}\centering\includegraphics[width=0.325\textwidth]{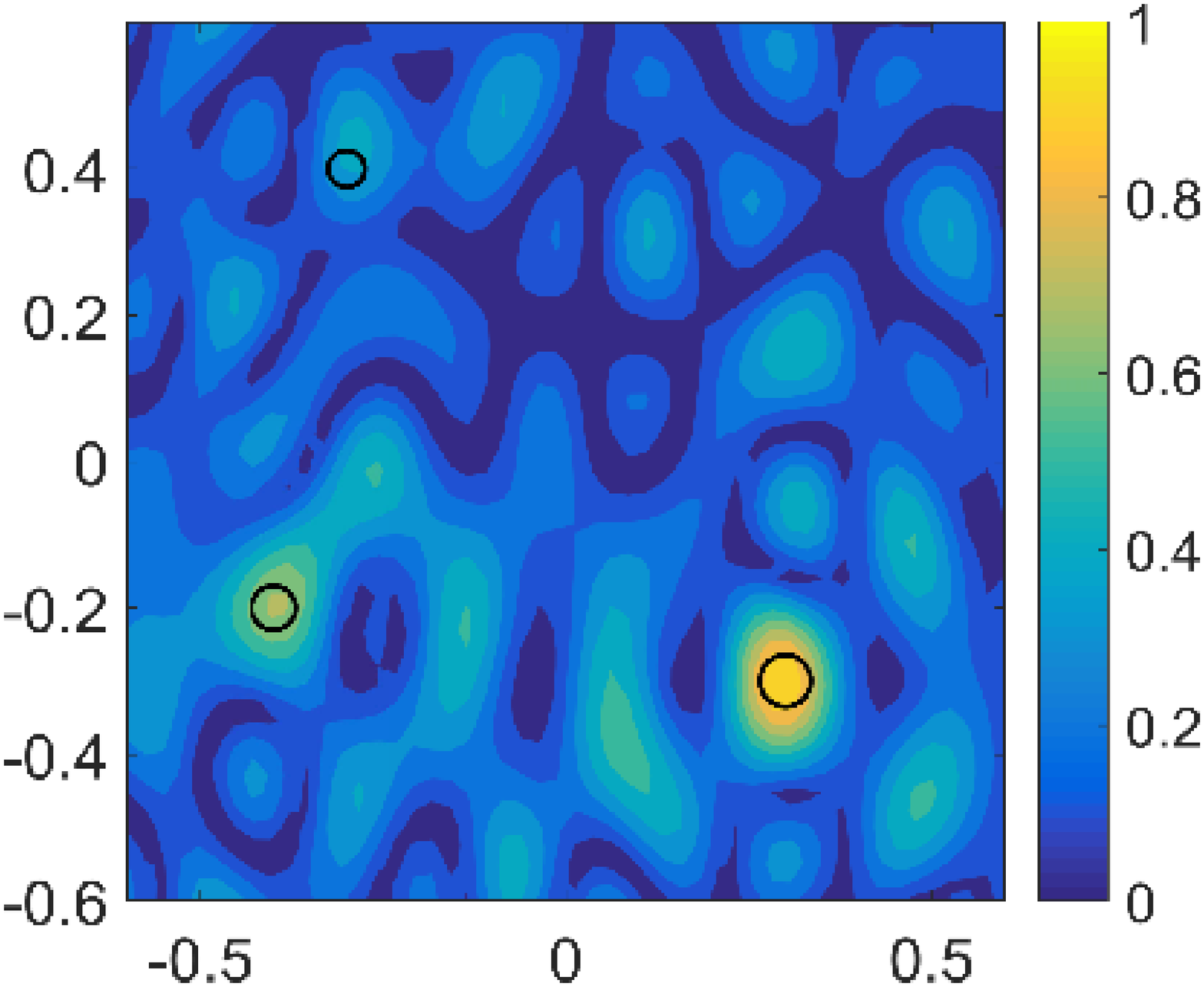}}
	\subfigure[Jaccard index for $L=2$]{\label{Result3-6}\centering\includegraphics[width=0.325\textwidth]{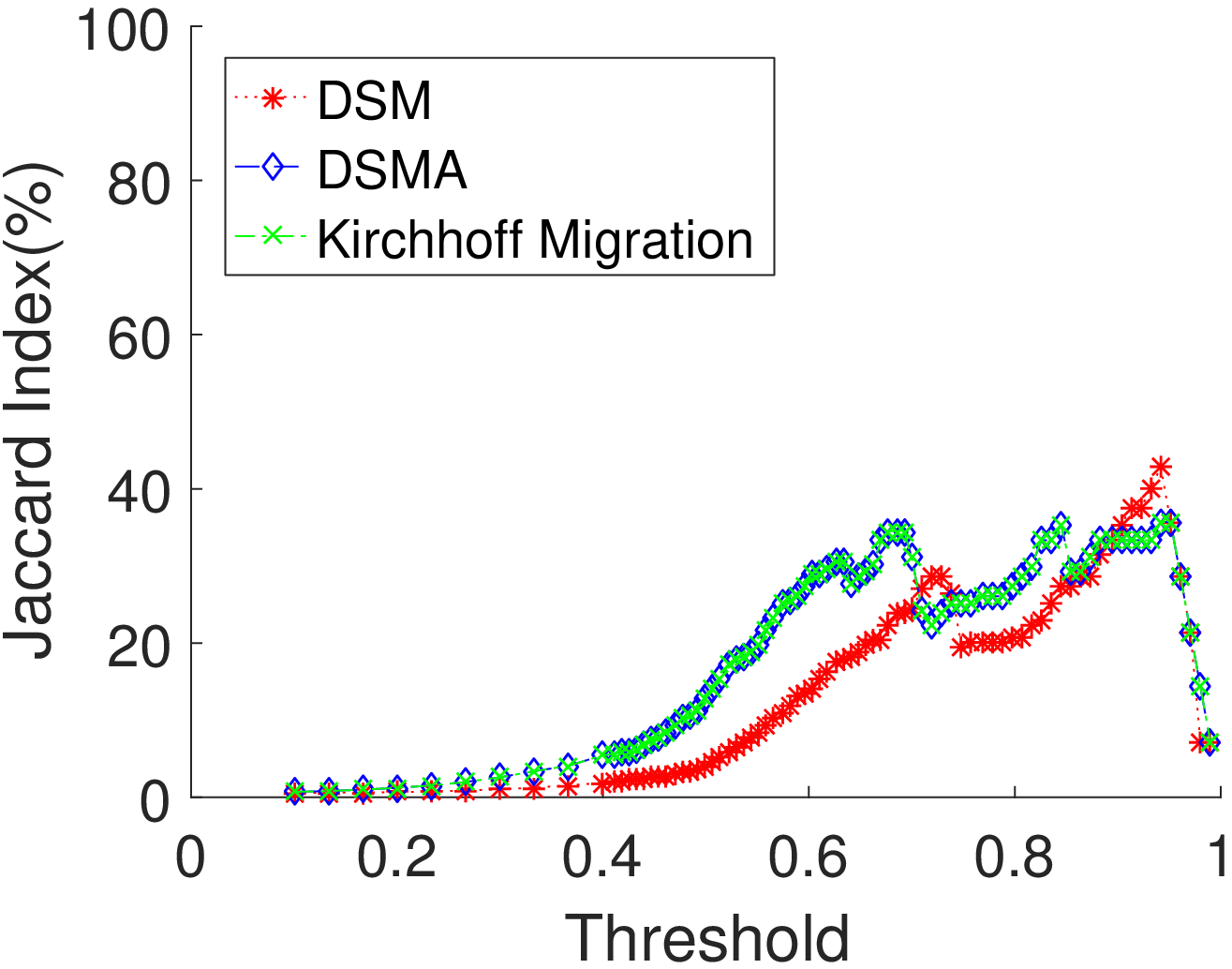}}
	
	\subfigure[Map of $\mathcal{I}_{\mathrm{DSM}}(\fz)$ for $L=12$]{\label{Result3-7}\centering\includegraphics[width=0.325\textwidth]{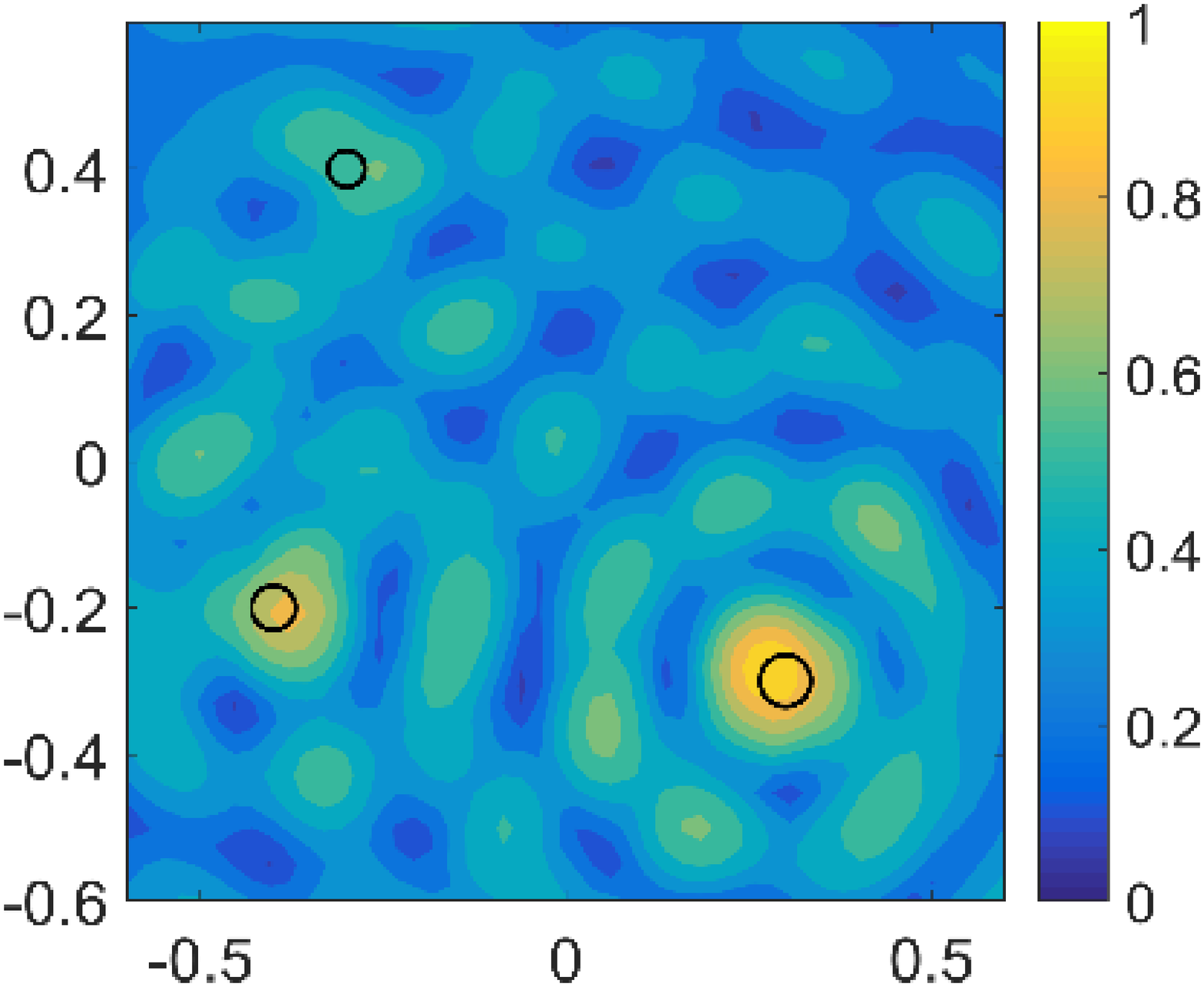}}
	\subfigure[Map of $\mathcal{I}_{\mathrm{DSMA}}(\fz)$ for $L=12$]{\label{Result3-8}\centering\includegraphics[width=0.325\textwidth]{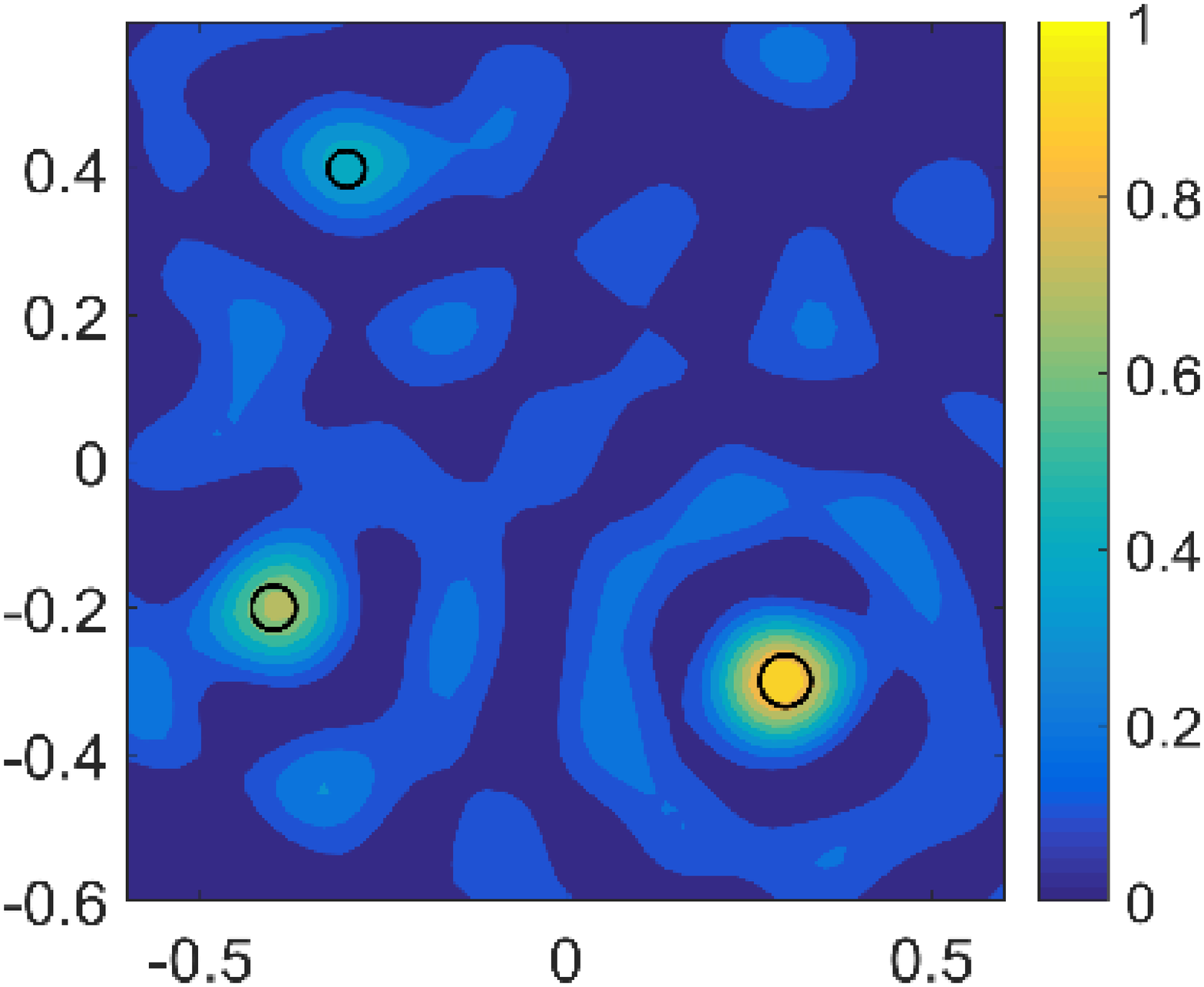}}
	\subfigure[Jaccard index for $L=12$]{\label{Result3-9}\centering\includegraphics[width=0.325\textwidth]{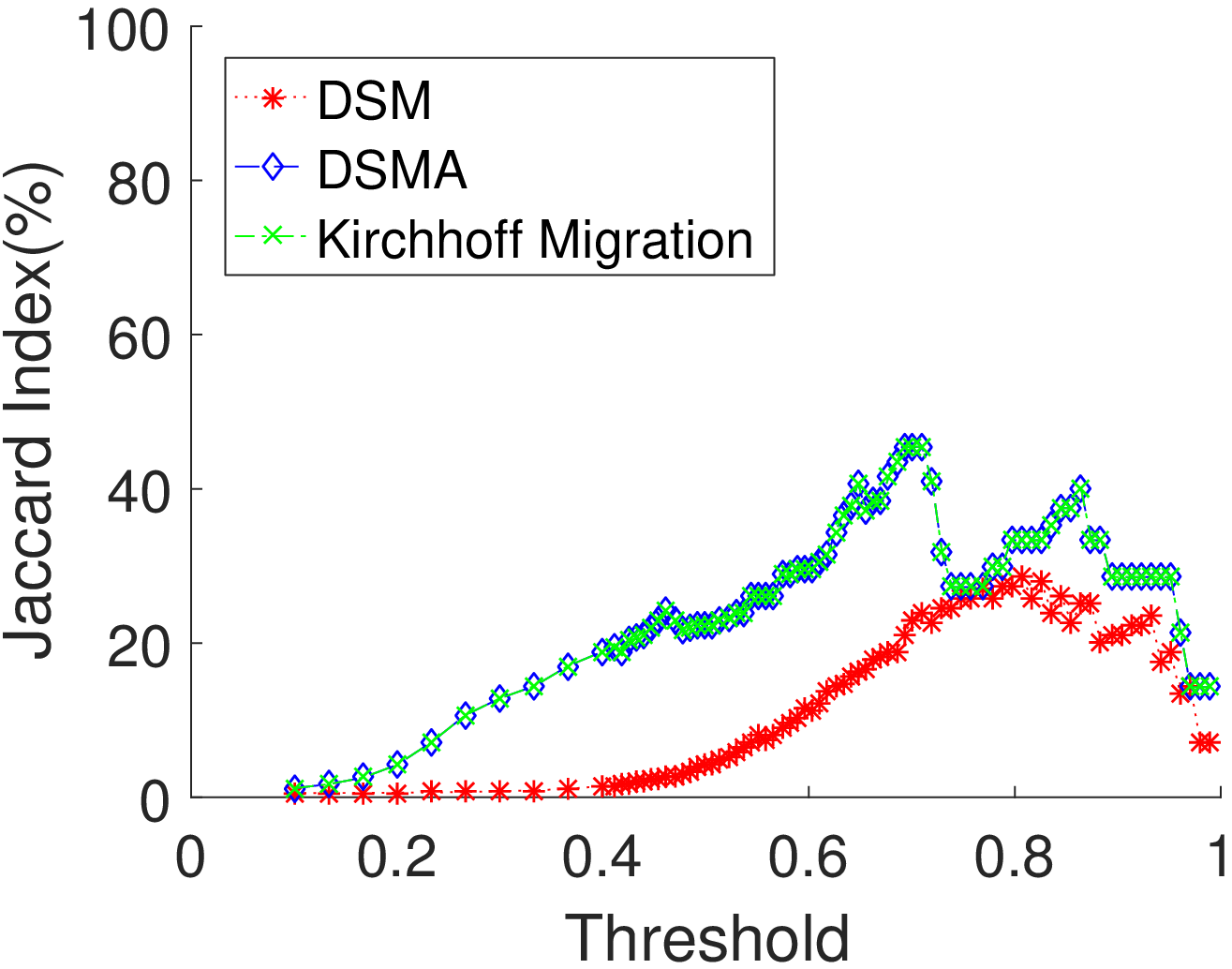}}
	
	\subfigure[Map of $\mathcal{I}_{\mathrm{DSM}}(\fz)$ for $L=36$]{\label{Result3-10}\centering\includegraphics[width=0.325\textwidth]{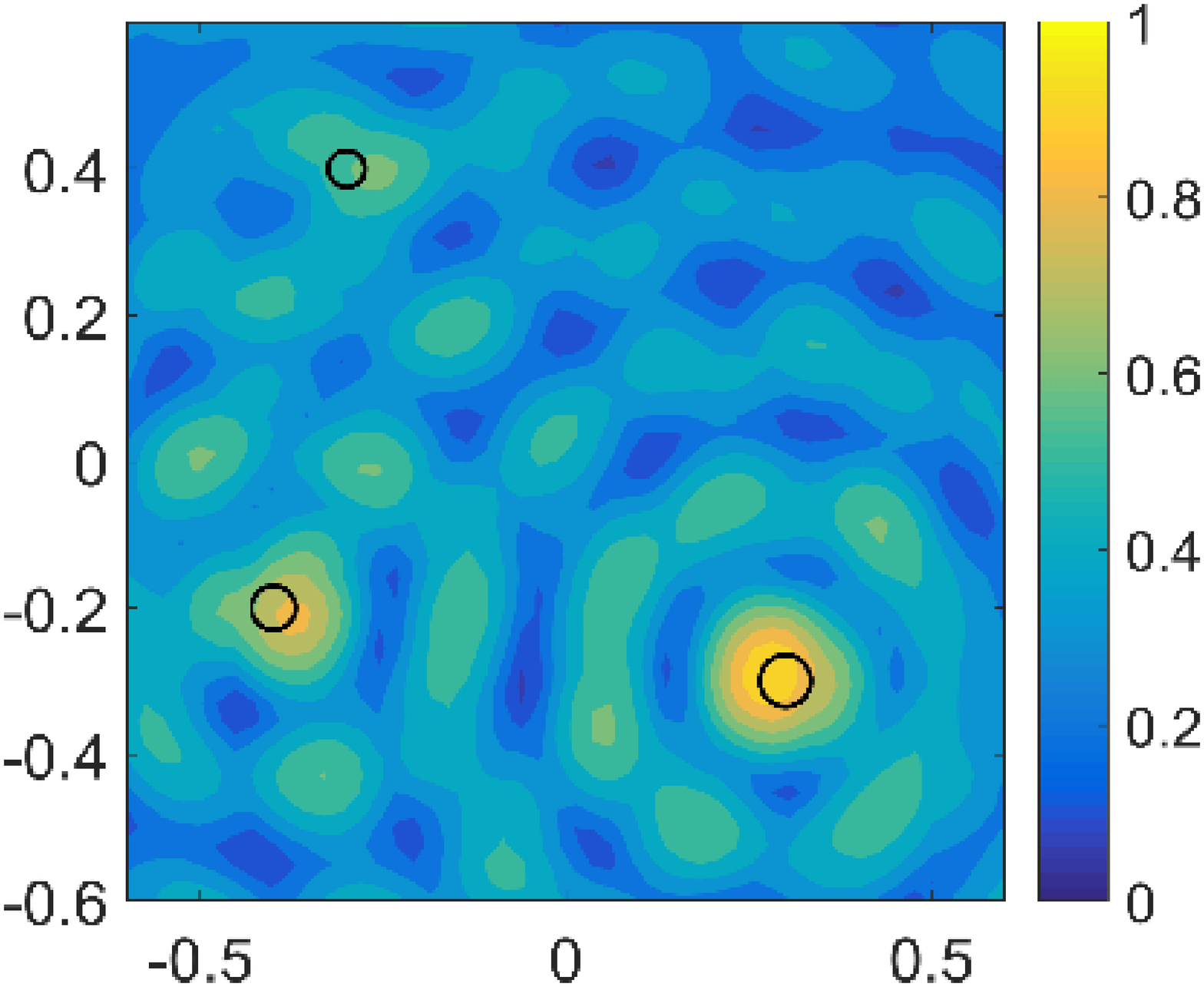}}
	\subfigure[Map of $\mathcal{I}_{\mathrm{DSMA}}(\fz)$ for $L=36$]{\label{Result3-11}\centering\includegraphics[width=0.325\textwidth]{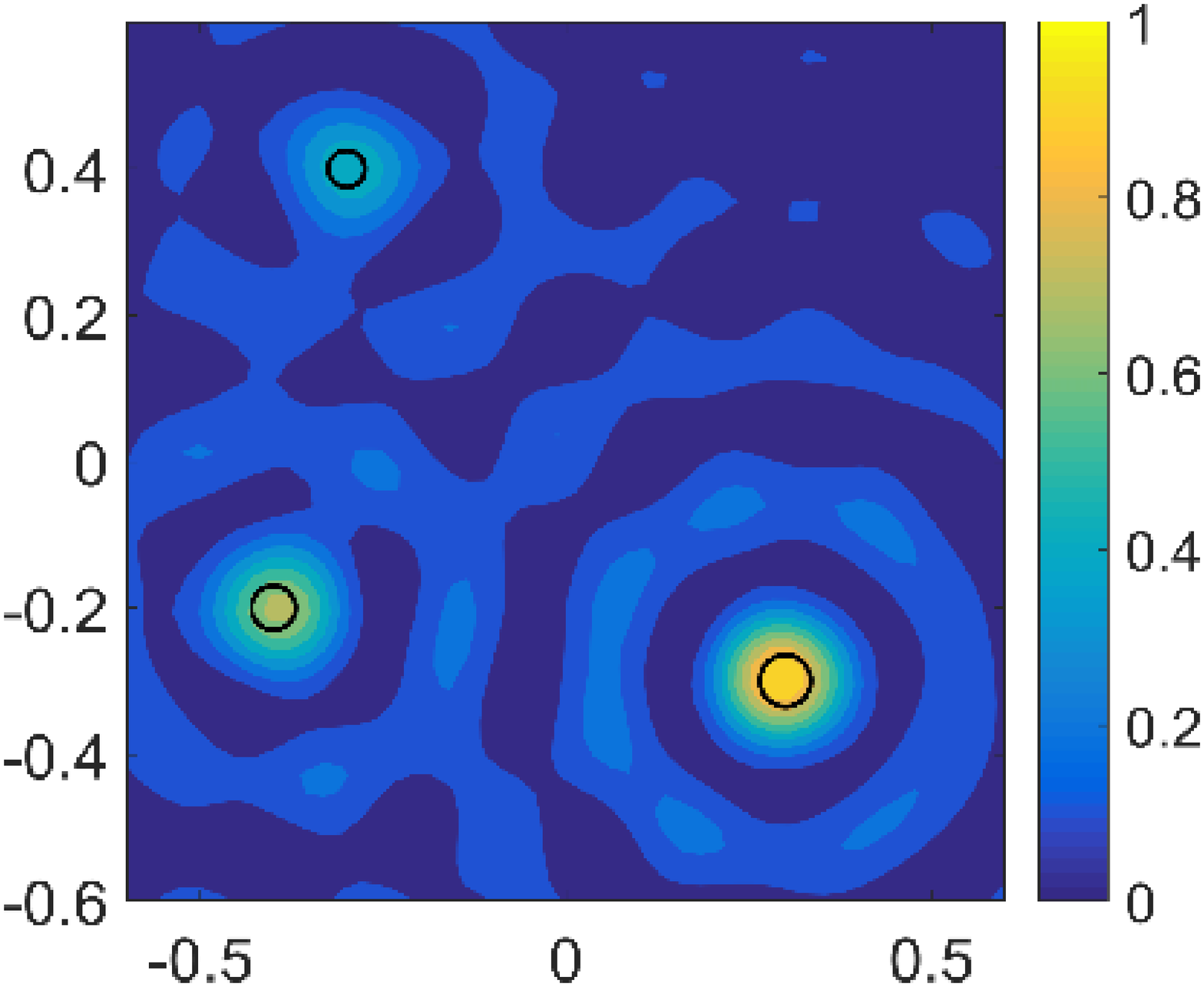}}
	\subfigure[Jaccard index for $L=36$]{\label{Result3-12}\centering\includegraphics[width=0.325\textwidth]{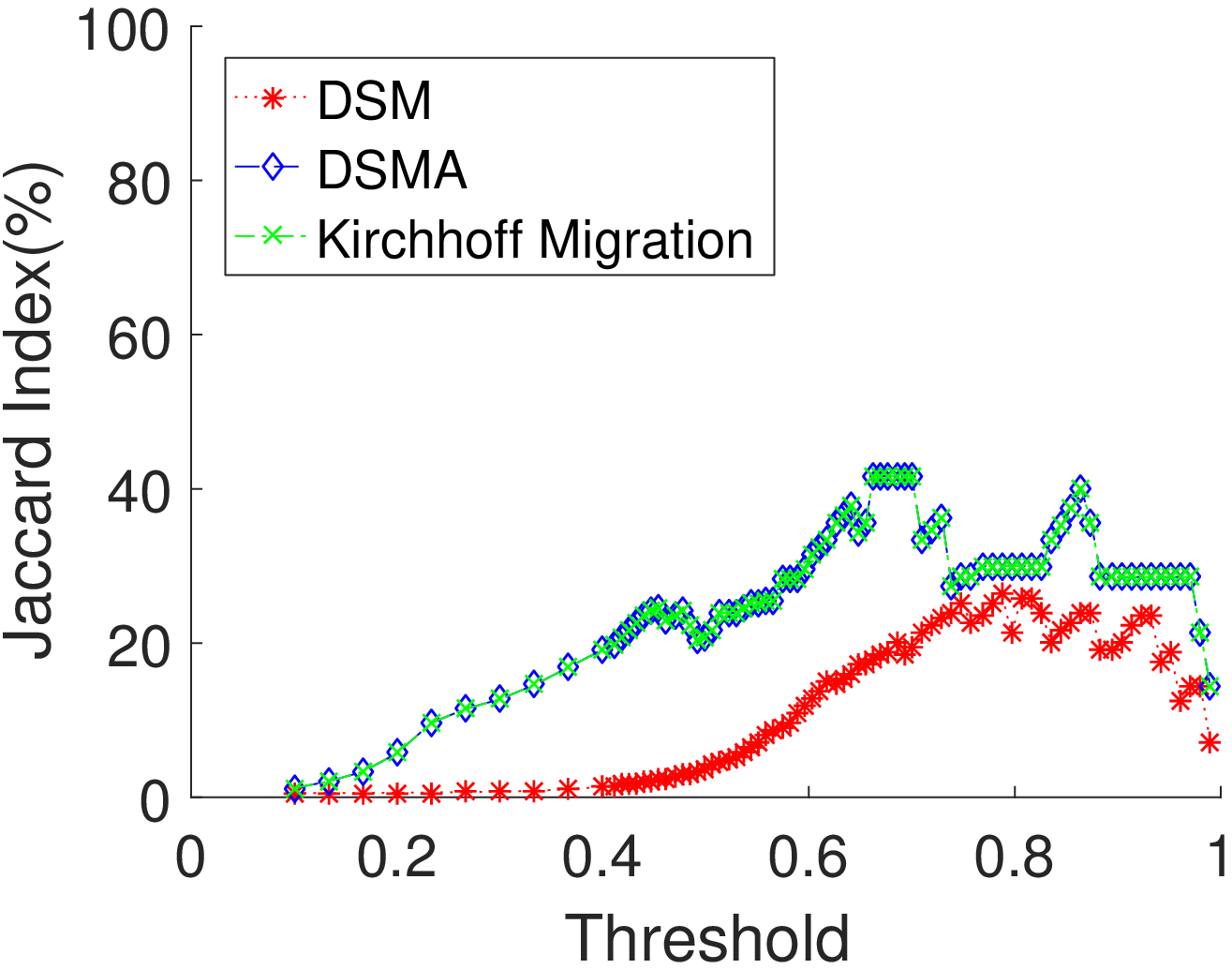}}
	
	\caption{\label{Result3}(Example~\ref{Example2}) Map of $\mathcal{I}_{\mathrm{DSM}}(\fz)$ (left column) $\mathcal{I}_{\mathrm{DSMA}}(\fz)$ (center column), and Jaccard index (right column).}
\end{figure}

Note that some numerical experiments (not presented in the following) have been done in the closely related case of small disks with the same radius but different permittivities and  the conclusions are the same, DSMA performs better than DSM when the number of incident fields increases. 

\begin{example}[Large disk]\label{Example4}
	In order to verify that our proposal still behaves properly when the small obstacle hypothesis is no longer verified, we are considering the case of a  large circular  single  inhomogeneity $\tau$ with radius $\alpha\equiv1\lambda=\SI{0.4}{\meter}$ and permittivity $\eps=5\eps_0$. A location is chosen as $\fr=(-0.75\lambda,-0.75\lambda)=(\SI{-0.3}{\meter},\SI{-0.3}{\meter})$. In this example, the search domain $\Omega_\Gamma$ is a square with side of $2.5\lambda(=\SI{1}{\meter})$, which is divided into small squares with side $h=0.102\lam=\SI{0.0408}{\meter}$.
\end{example}

According to Figure \ref{Result5},  the exact location and shape of $\tau$ with a few incident waves (one or two) is difficult to obtain both with  DSM and DSMA. But, as the number of incident waves increases, the image of $\tau$ is improving with  $\mathcal{I}_{\mathrm{DSM}}$ and with $\mathcal{I}_{\mathrm{DSMA}}$. From the Jaccard index it can be seen that DSMA has better performance than DSM even if we are no more within the small obstacle hypothesis.  

\begin{figure}[h]
	\centering
	\subfigure[Map of $\mathcal{I}_{\mathrm{DSM}}(\fz)$ for $L=1$]{\label{Result5-1}\centering\includegraphics[width=0.325\textwidth]{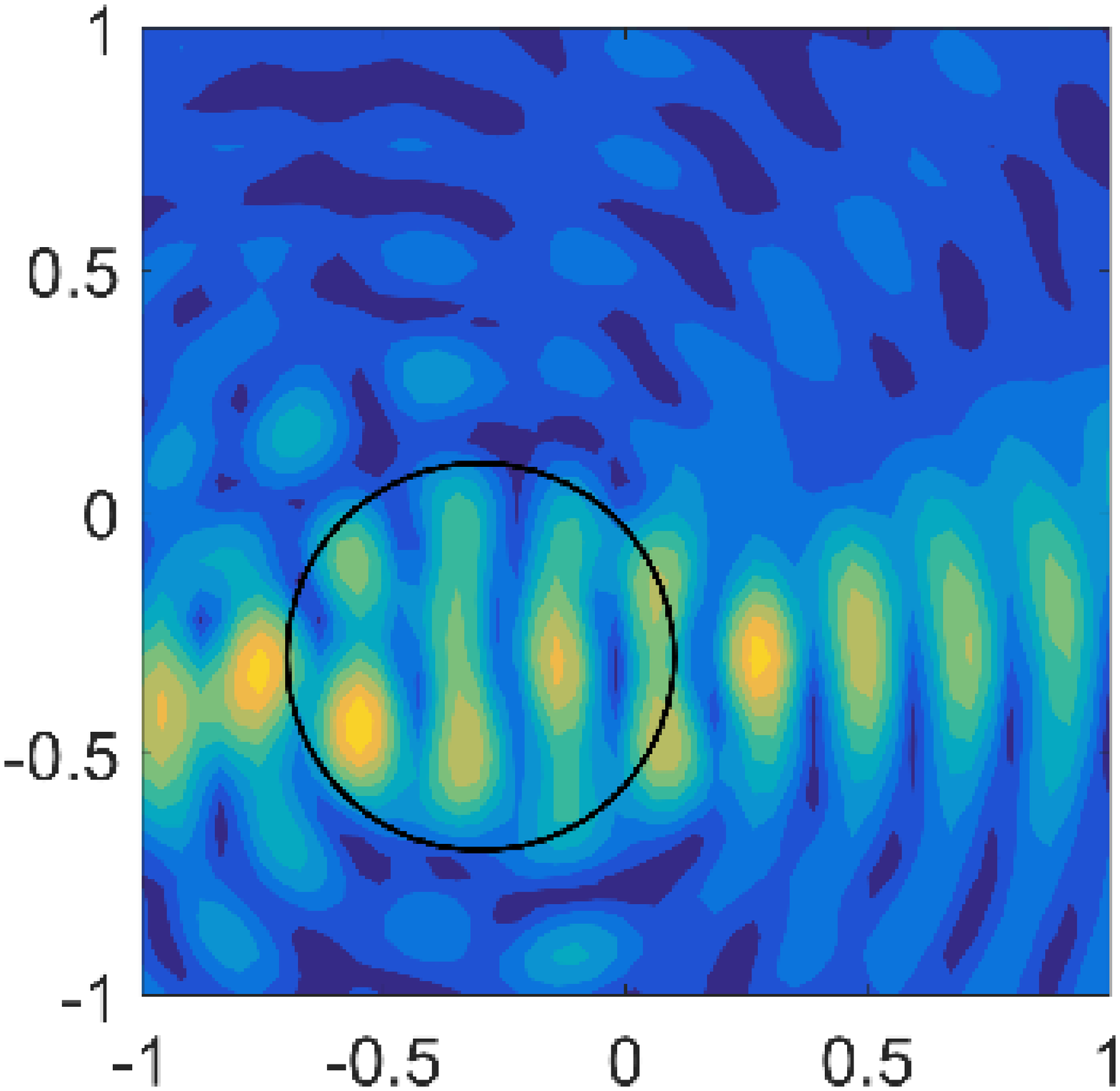}}
	\subfigure[Map of $\mathcal{I}_{\mathrm{DSMA}}(\fz)$ for $L=1$]{\label{Result5-2}\centering\includegraphics[width=0.325\textwidth]{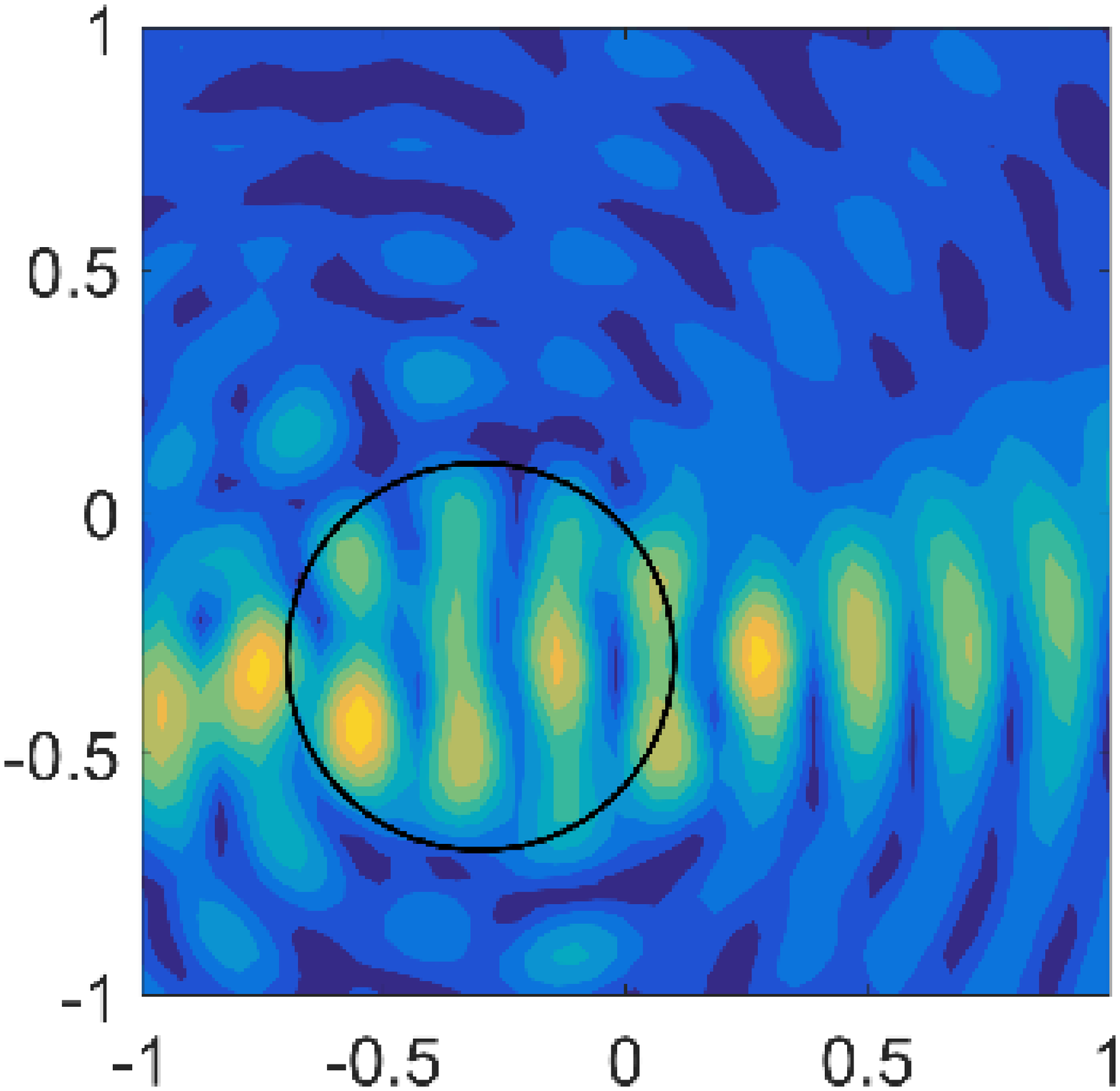}}
	\subfigure[Jaccard index for $L=1$]{\label{Result5-3}\centering\includegraphics[width=0.325\textwidth]{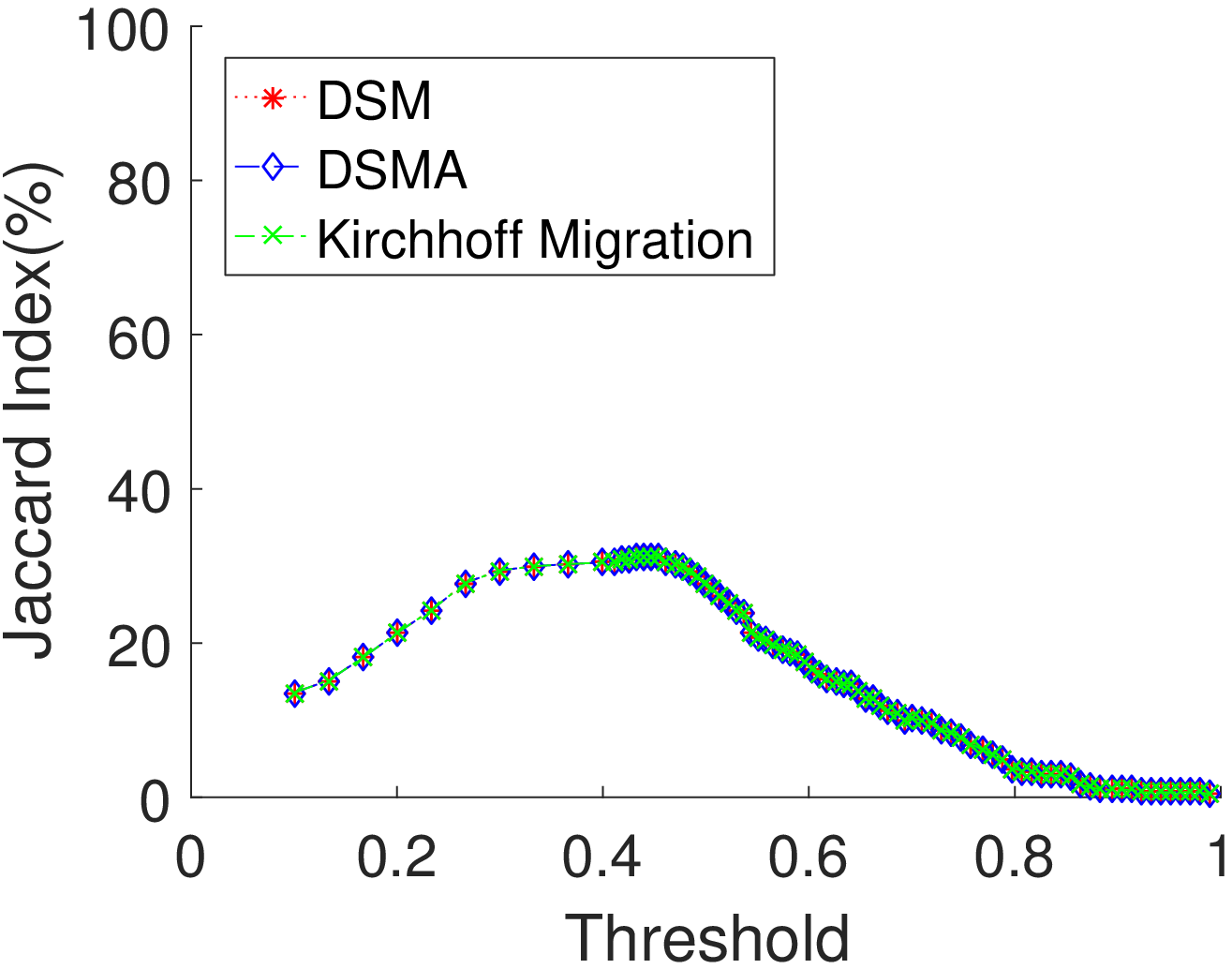}}
	
	\subfigure[Map of $\mathcal{I}_{\mathrm{DSM}}(\fz)$ for $L=2$]{\label{Result5-4}\centering\includegraphics[width=0.325\textwidth]{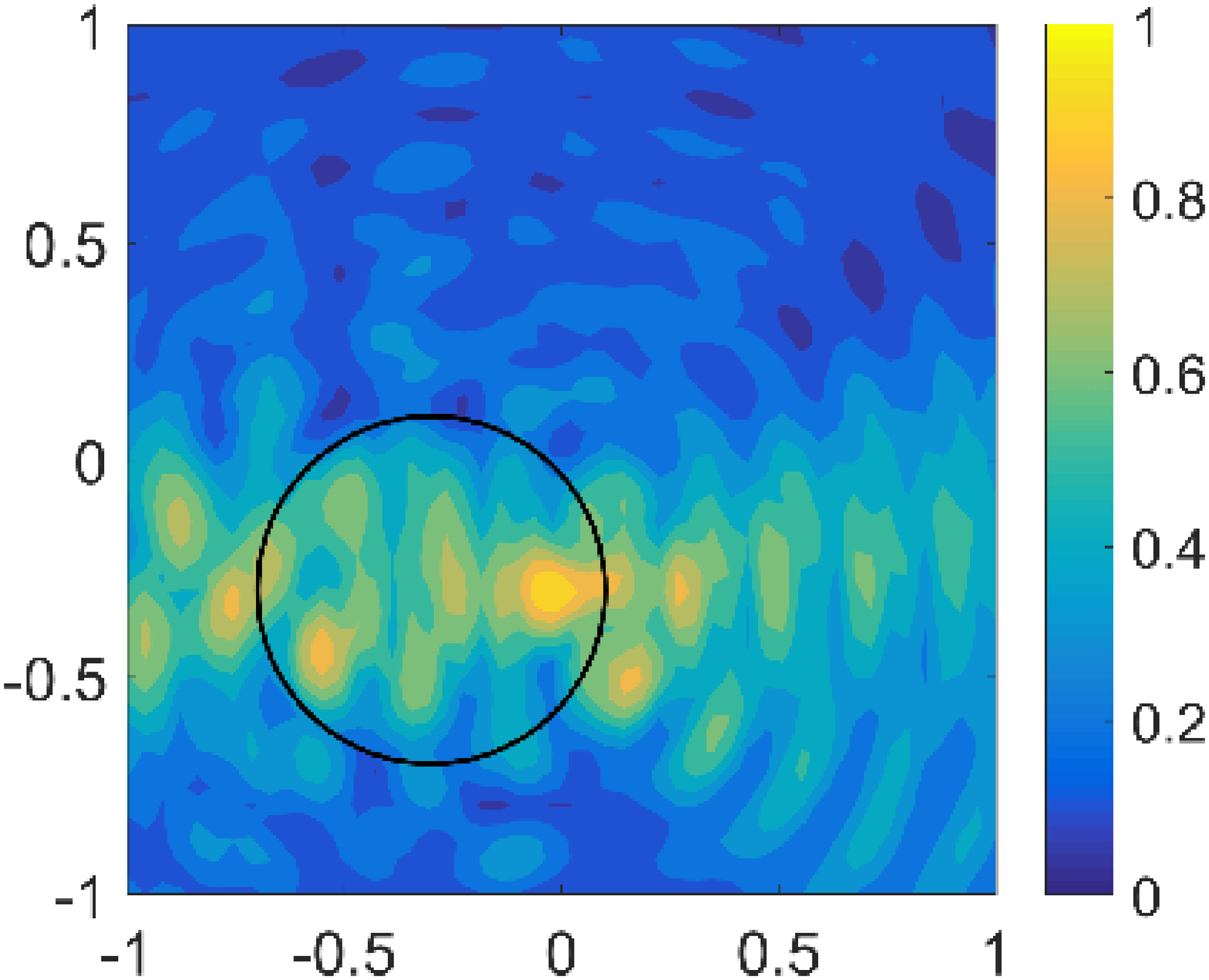}}
	\subfigure[Map of $\mathcal{I}_{\mathrm{DSMA}}(\fz)$ for  $L=2$]{\label{Result5-5}\centering\includegraphics[width=0.325\textwidth]{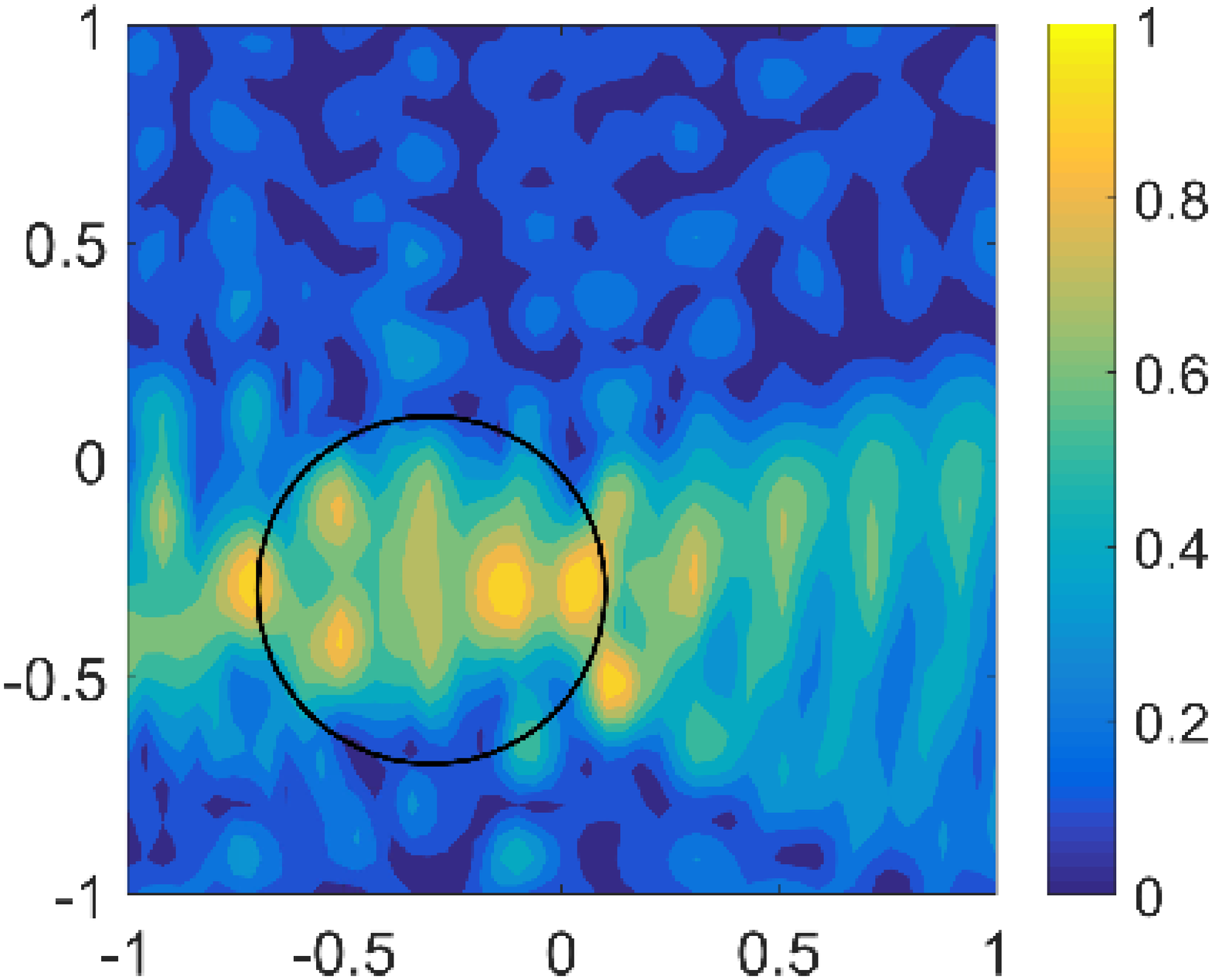}}
	\subfigure[Jaccard index for $L=2$]{\label{Result5-6}\centering\includegraphics[width=0.325\textwidth]{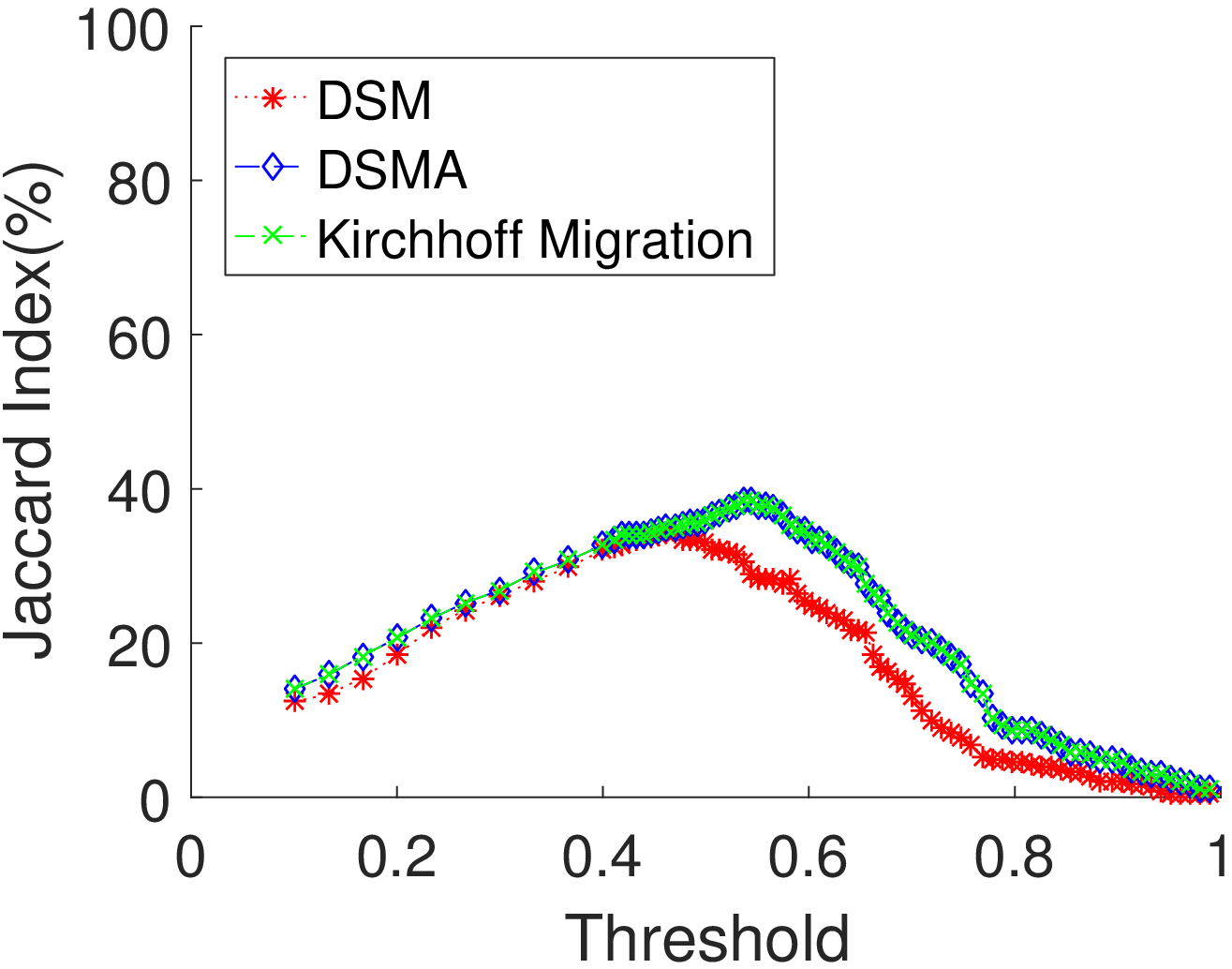}}
	
	\subfigure[Map of $\mathcal{I}_{\mathrm{DSM}}(\fz)$ for $L=12$]{\label{Result5-7}\centering\includegraphics[width=0.325\textwidth]{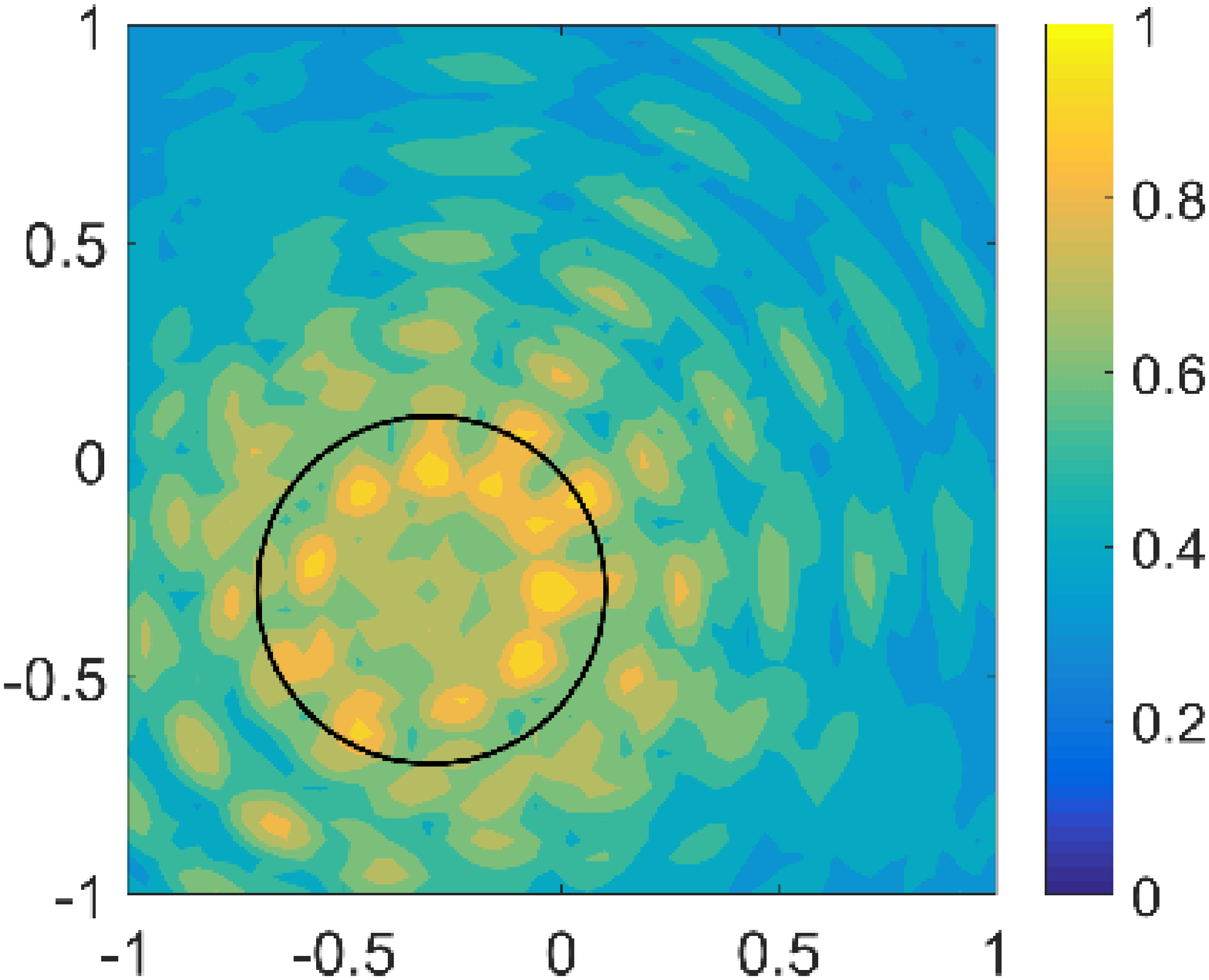}}
	\subfigure[Map of $\mathcal{I}_{\mathrm{DSMA}}(\fz)$ for $L=12$]{\label{Result5-8}\centering\includegraphics[width=0.325\textwidth]{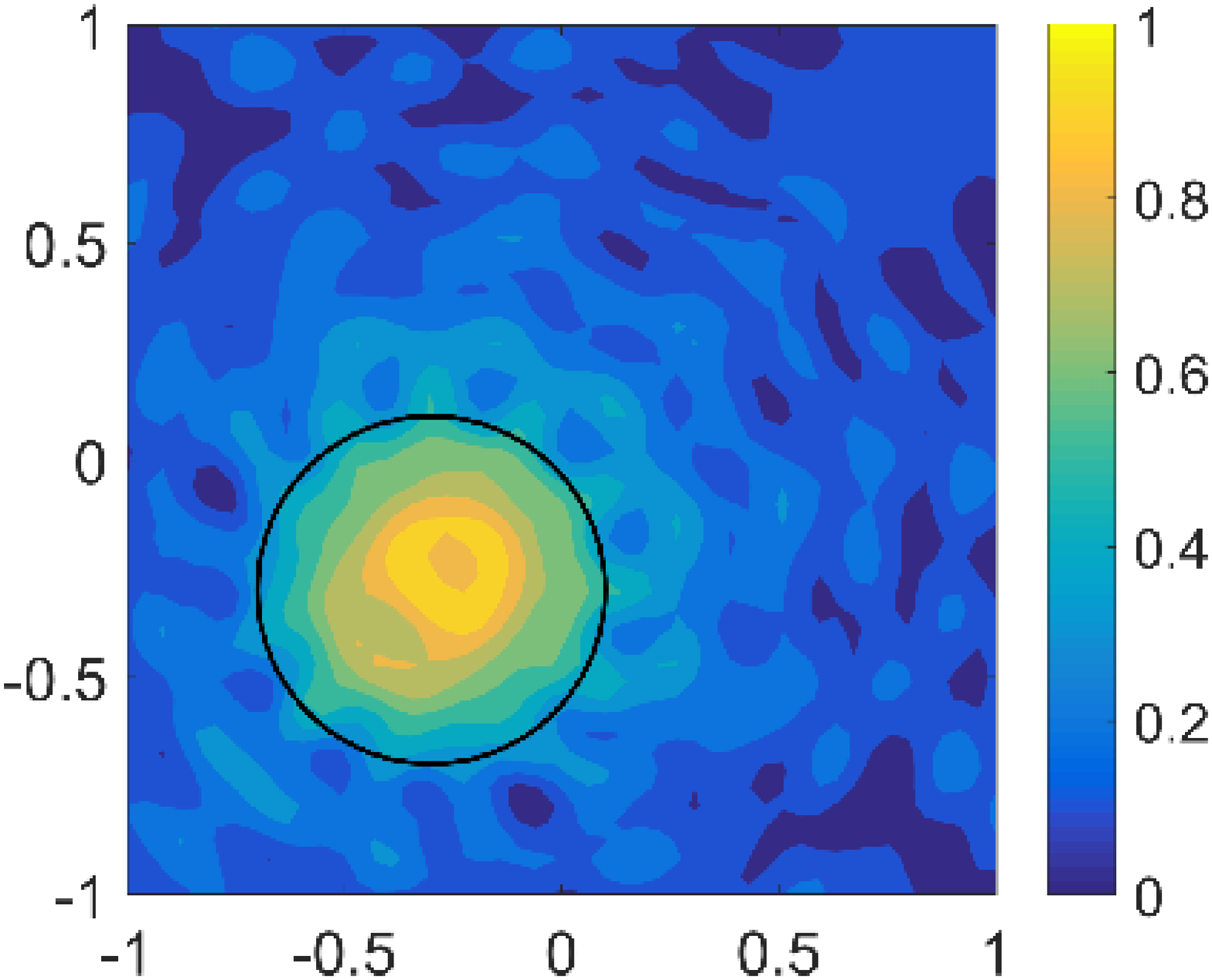}}
	\subfigure[Jaccard index for $L=12$]{\label{Result5-9}\centering\includegraphics[width=0.325\textwidth]{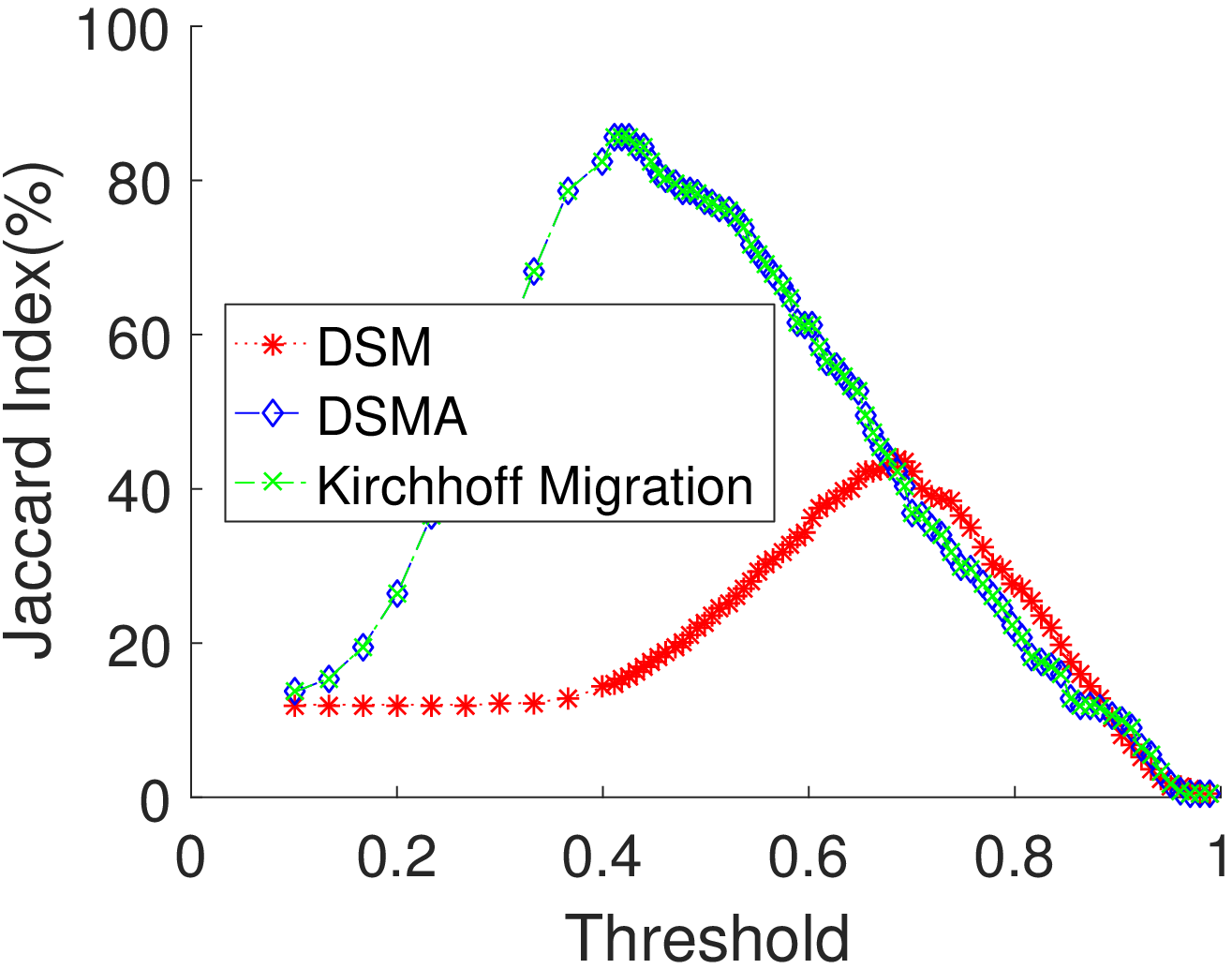}}
	
	\subfigure[Map of $\mathcal{I}_{\mathrm{DSM}}(\fz)$ for $L=36$]{\label{Result5-10}\centering\includegraphics[width=0.325\textwidth]{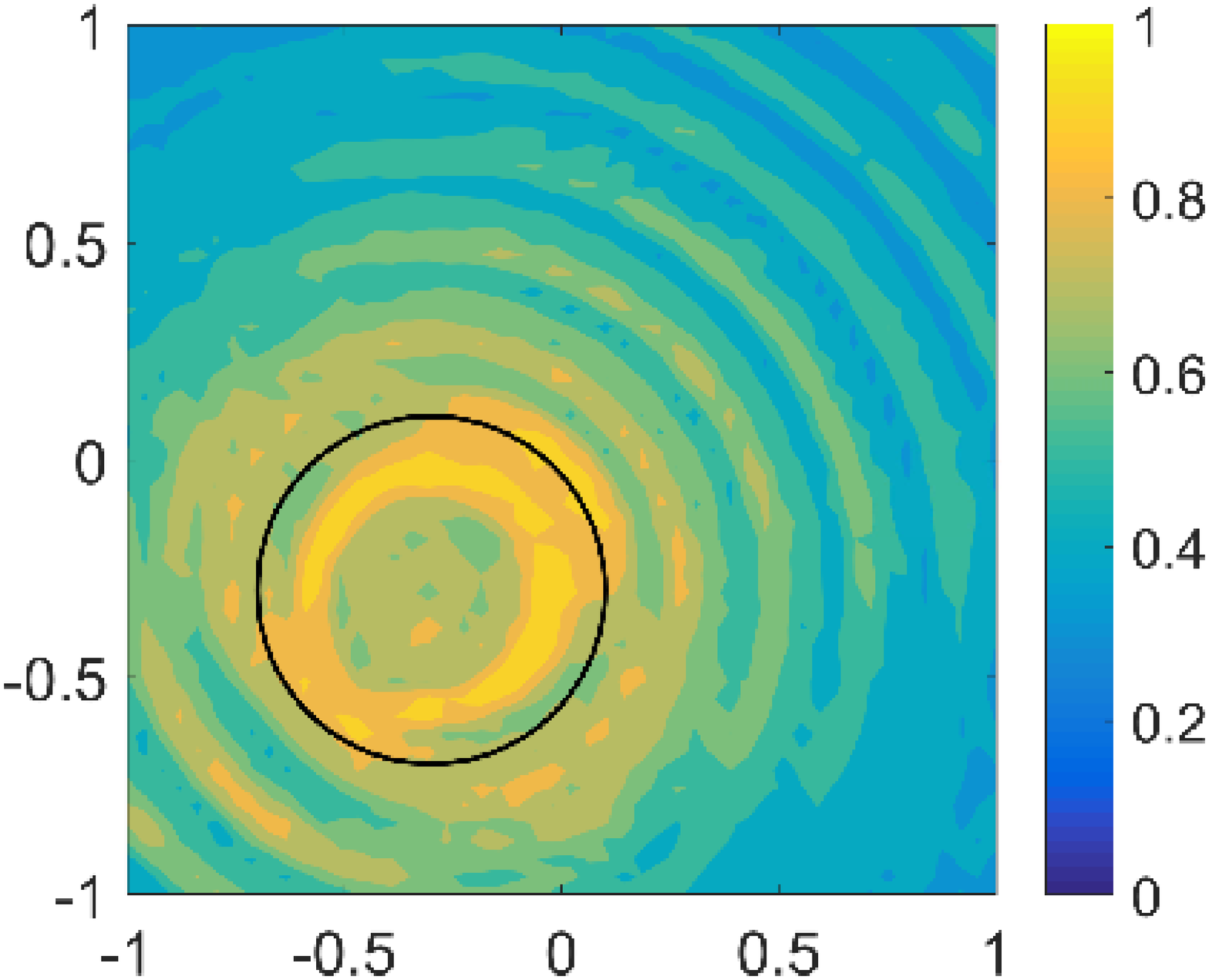}}
	\subfigure[Map of $\mathcal{I}_{\mathrm{DSMA}}(\fz)$ for $L=36$]{\label{Result5-11}\centering\includegraphics[width=0.325\textwidth]{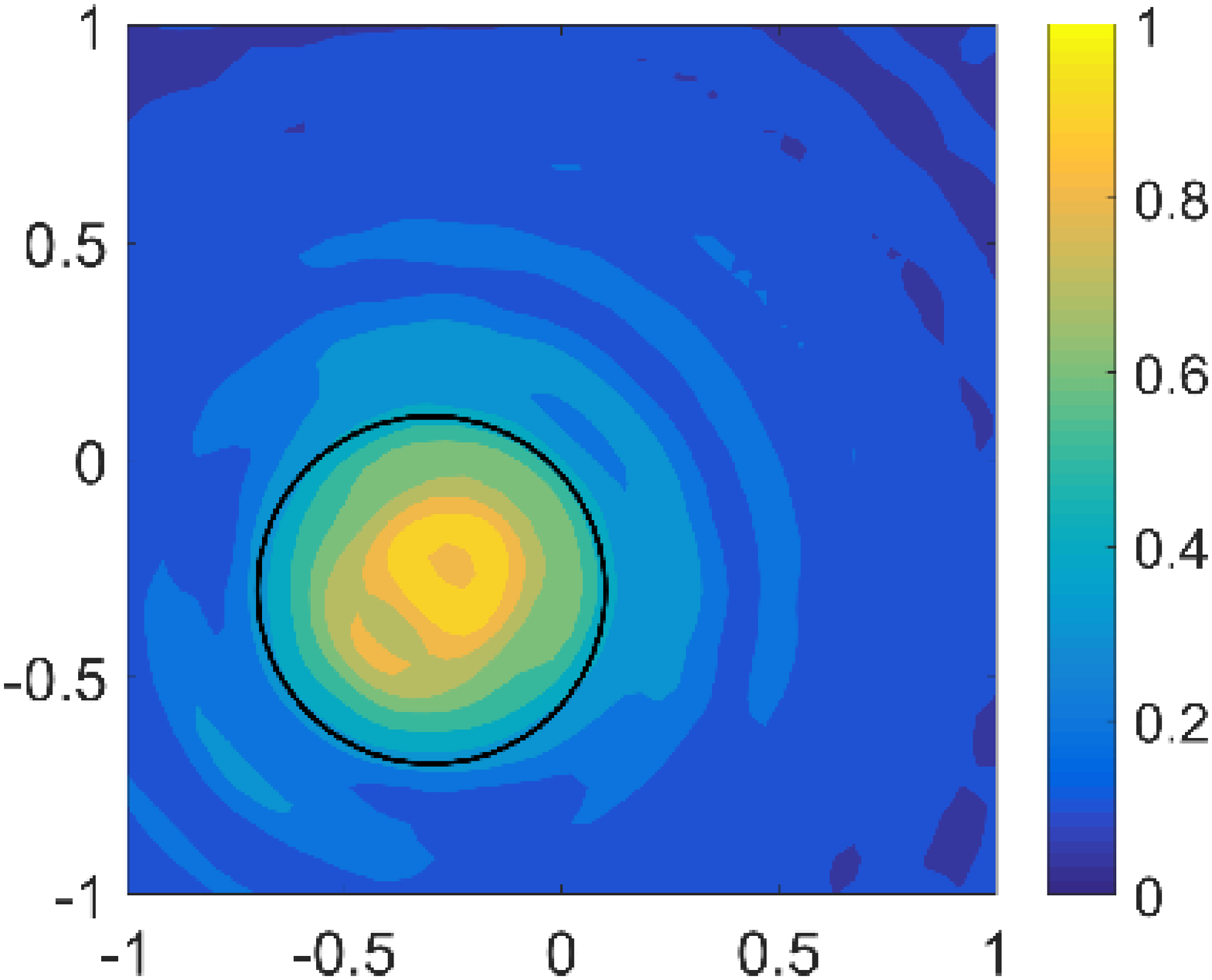}}
	\subfigure[Jaccard index for $L=36$]{\label{Result5-12}\centering\includegraphics[width=0.325\textwidth]{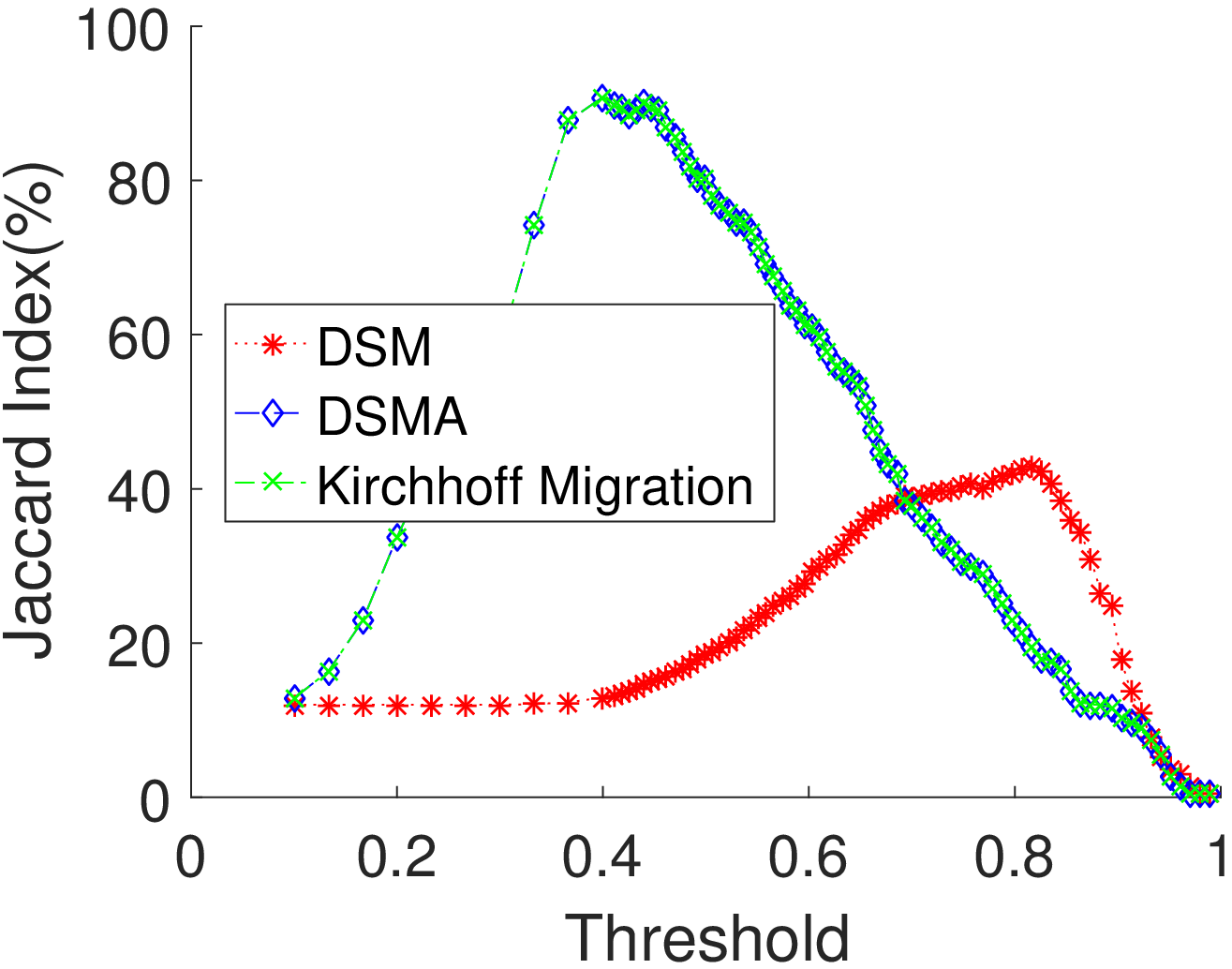}}
		
	\caption{\label{Result5}(Example~\ref{Example4}) Map of $\mathcal{I}_{\mathrm{DSM}}(\fz)$ (left column) $\mathcal{I}_{\mathrm{DSMA}}(\fz)$ (center column), and Jaccard index (right column).}
\end{figure}

\section{Illustration with some experimental data}\label{sec:ExperimentalData}
In the following both approaches are applied to  experimental data available at \url{http://www.fresnel.fr/3Ddatabase/database.php}  and described in \cite{0266-5611-17-6-301}. In order to be as close as possible of the framework to the theoretical results the frequency is chosen as $f=\SI{2}{\GHz}$, which corresponds to a wavelength $\lambda=\SI{0.1499}{\m}$. The chosen configuration is the one with two dielectric cylinders of radius $\alpha_m\approx\lambda/10, m=1, 2$ and of permittivity of $\varepsilon_m=\left(3.0\pm 0.3\right)\varepsilon_0$ with $r_1\approx \left(-2\lambda/30,-3\lambda/10~\si{\m}\right)$ and $r_2\approx \left(\SI{0}{\m},3\lambda/10~\si{\m}\right)$ (experimental file name is \textit{twodielTM 8f.exp}). The measurement configuration is as follows, $L=36$ sources are at a distance of $d_s\approx 4.80 \lambda$ evenly distributed from $\SI{10}{\degree}$ to $\SI{350}{\degree}$ and  $N=49$ receivers are placed at $d_l \approx 5.07\lambda$ and evenly distributed from $\SI{5}{\degree}$ to $\SI{355}{\degree}$. $\Omega_{\Gamma}$ is a square area of $3 \lambda \times 3 \lambda$ and has been discretized in $50\times 50$ pixels. It is worth to note that
\begin{itemize}
\item due to experimental set-up limitations the full Multi-Static Response is not available;
\item the incident field is no longer within the far field approximation and cannot be approximated by a plane wave.
\end{itemize}

The results are displayed in figure~\ref{FresnelTwoCylinder2GHz}. For the case of a single source (figure~\ref{FresnelTwoCylinder2GHz}, first line) neither DSM nor  DSMA provides a good localization  of the defect even though the two maps are almost identical. Some discrepancies can be seen between the two, thanks to the Jaccard index comparison. They are related to the fact that, as already mentioned, the incident field cannot be approximated by a plane wave. When the number of incident fields $L$ is increasing (figure~\ref{FresnelTwoCylinder2GHz}, second and third  line) the improvement provided by $\mathcal{I}_{\mathrm{DSMA}}$ compared to $\mathcal{I}_{\mathrm{DSM}}$ is still valid up to a threshold of $\kappa = \SI{80}{\percent}$.
\begin{figure}[h]
	\centering
		\subfigure[DSM with $L=1$]{\centering\includegraphics[width=0.32\textwidth]{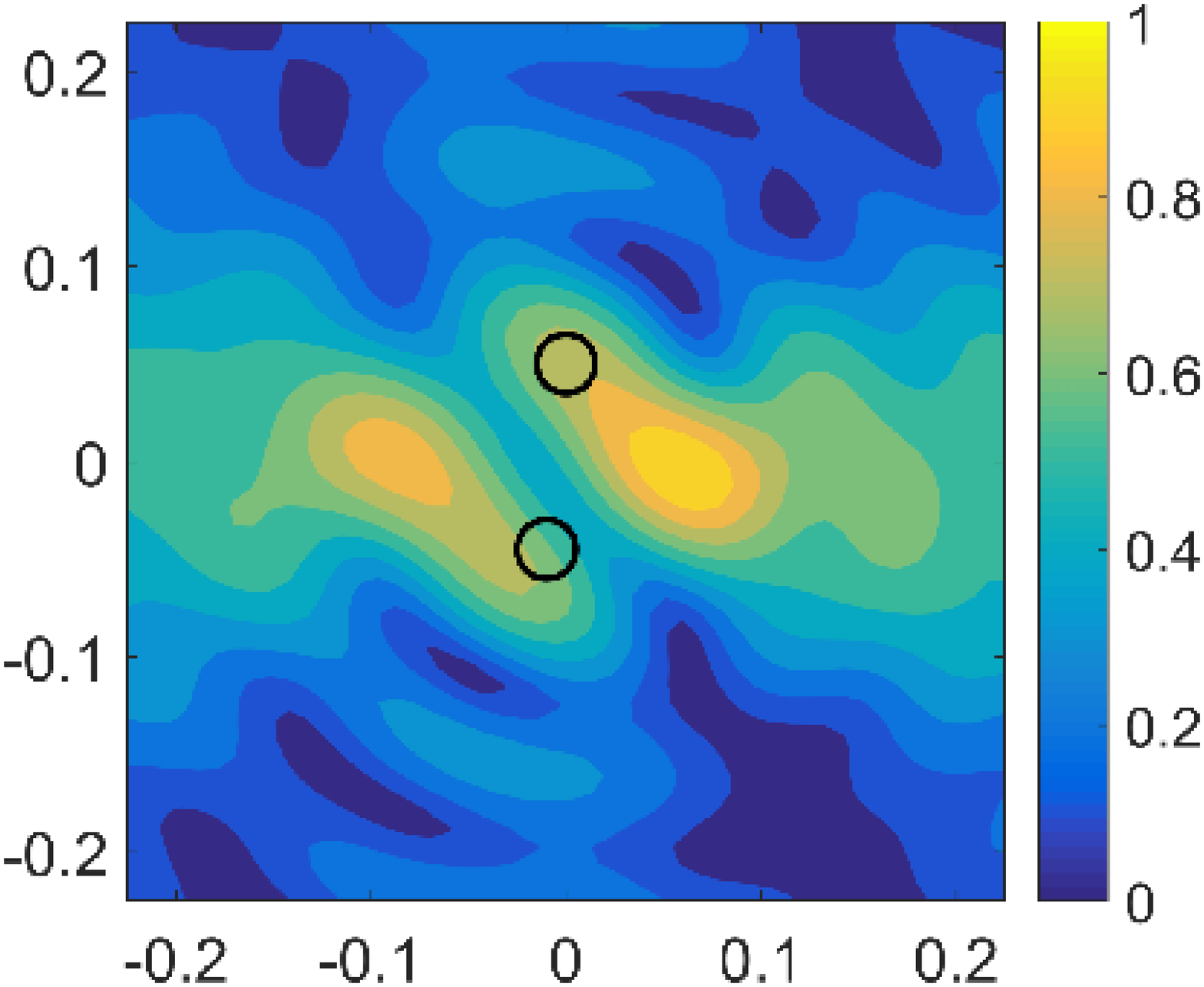}}
		\subfigure[DSMA with $L=1$]{\centering\includegraphics[width=0.32\textwidth]{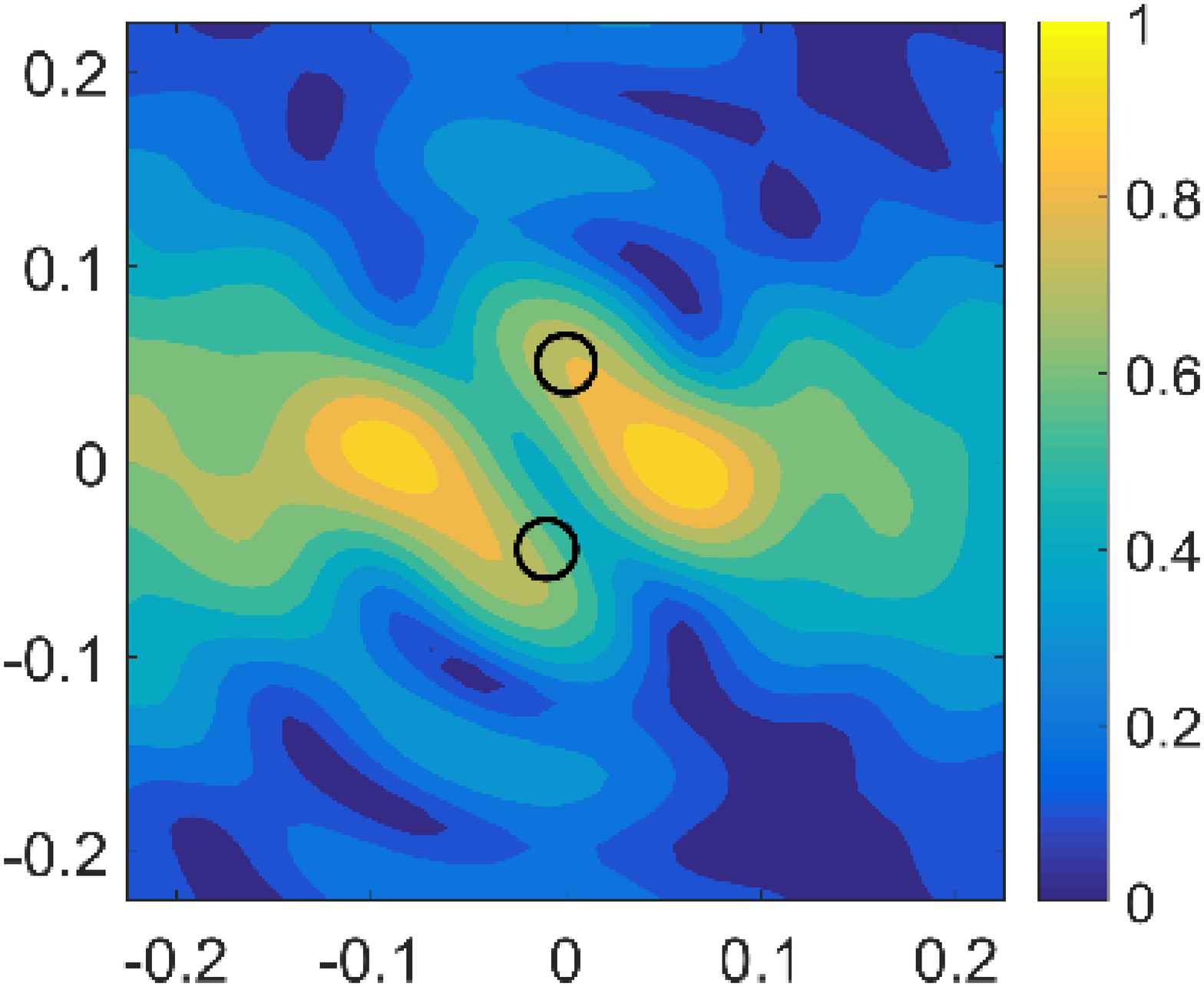}}
		\subfigure[Jaccard Index with $L=1$]{\centering\includegraphics[width=0.32\textwidth]{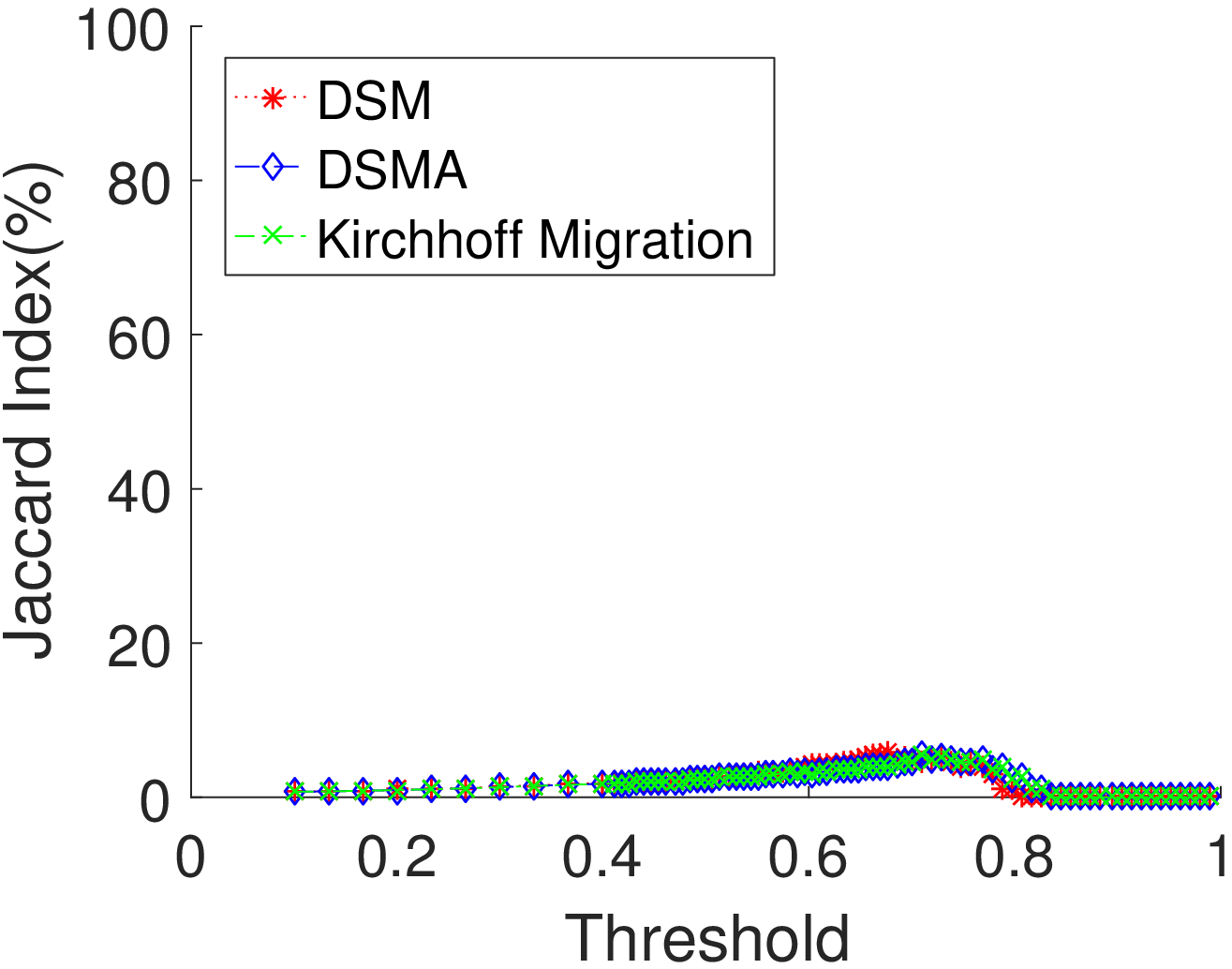}}
		
		\subfigure[DSM with $L=12$]{\centering\includegraphics[width=0.32\textwidth]{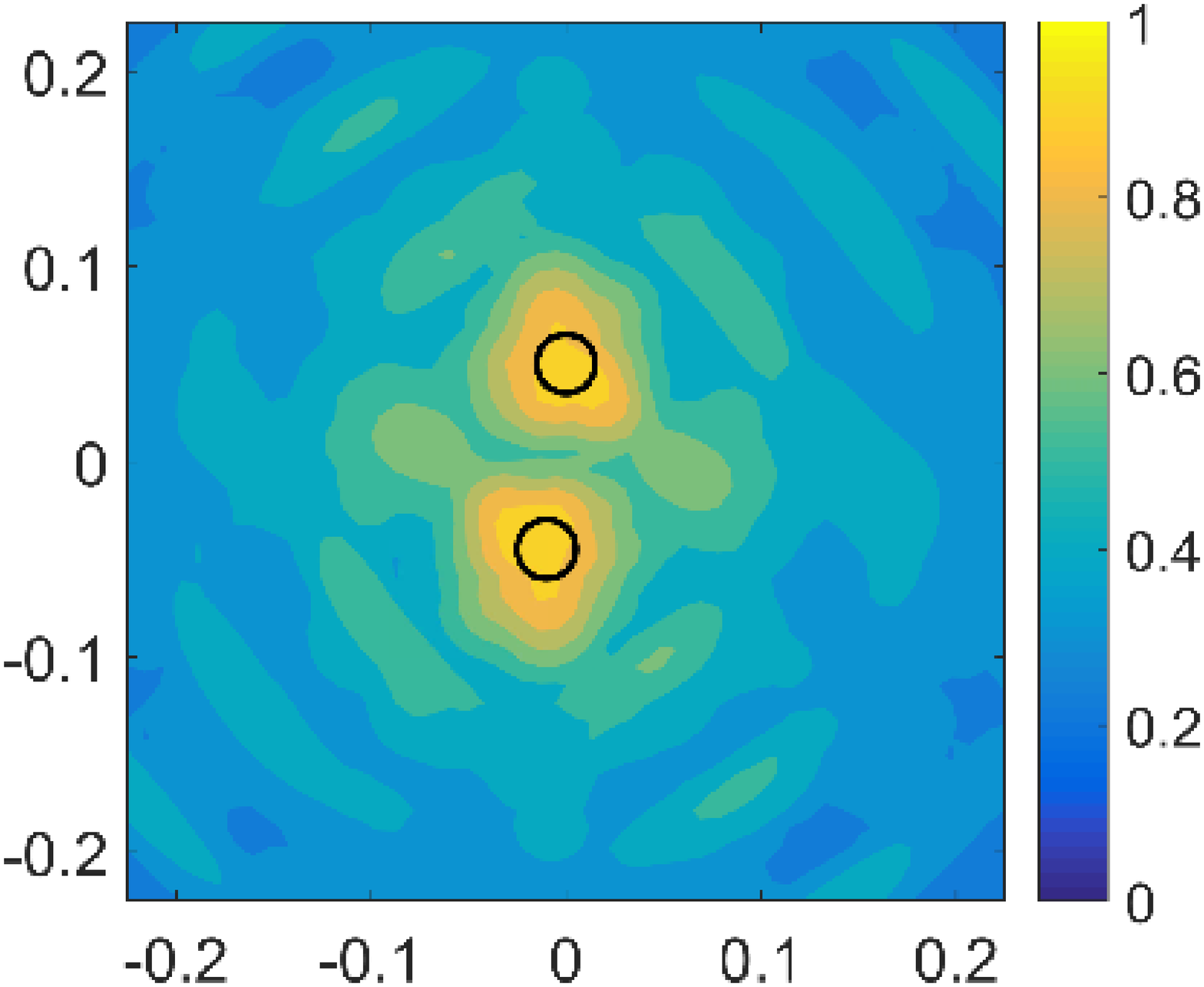}}
		\subfigure[DSMA with $L=12$]{\centering\includegraphics[width=0.32\textwidth]{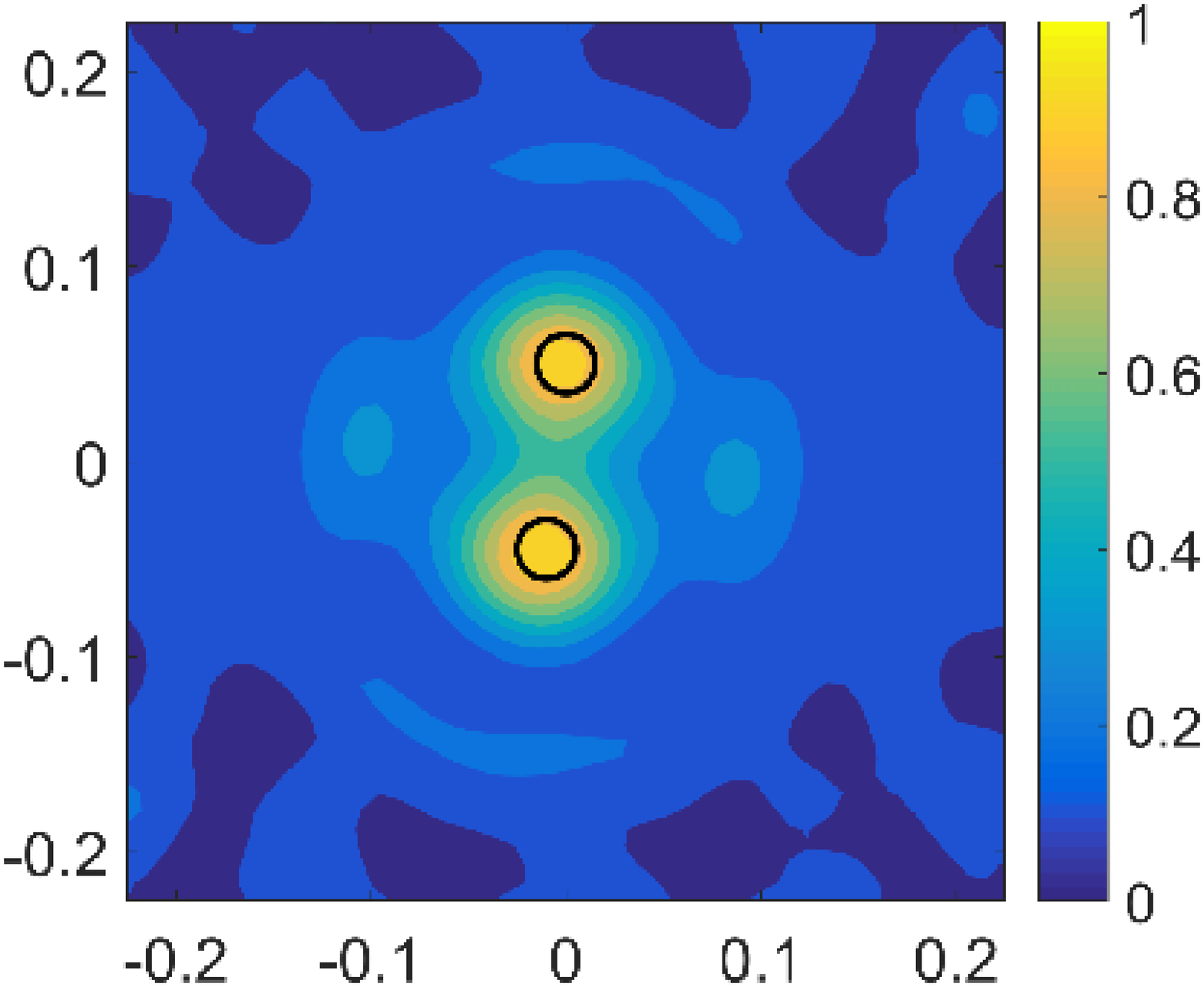}}
		\subfigure[Jaccard Index with $L=12$]{\centering\includegraphics[width=0.32\textwidth]{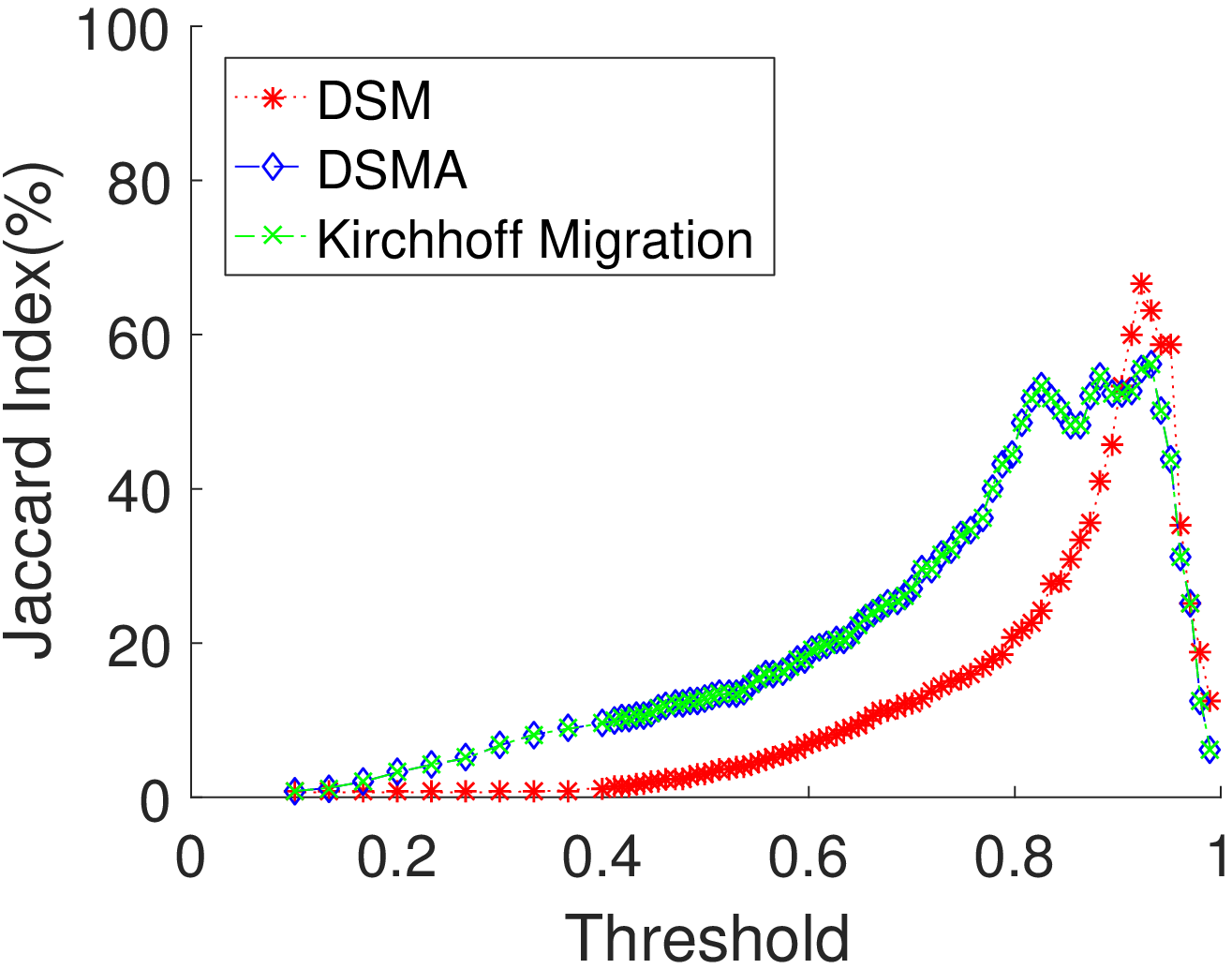}}
		
		\subfigure[DSM with $L=36$]{\centering\includegraphics[width=0.32\textwidth]{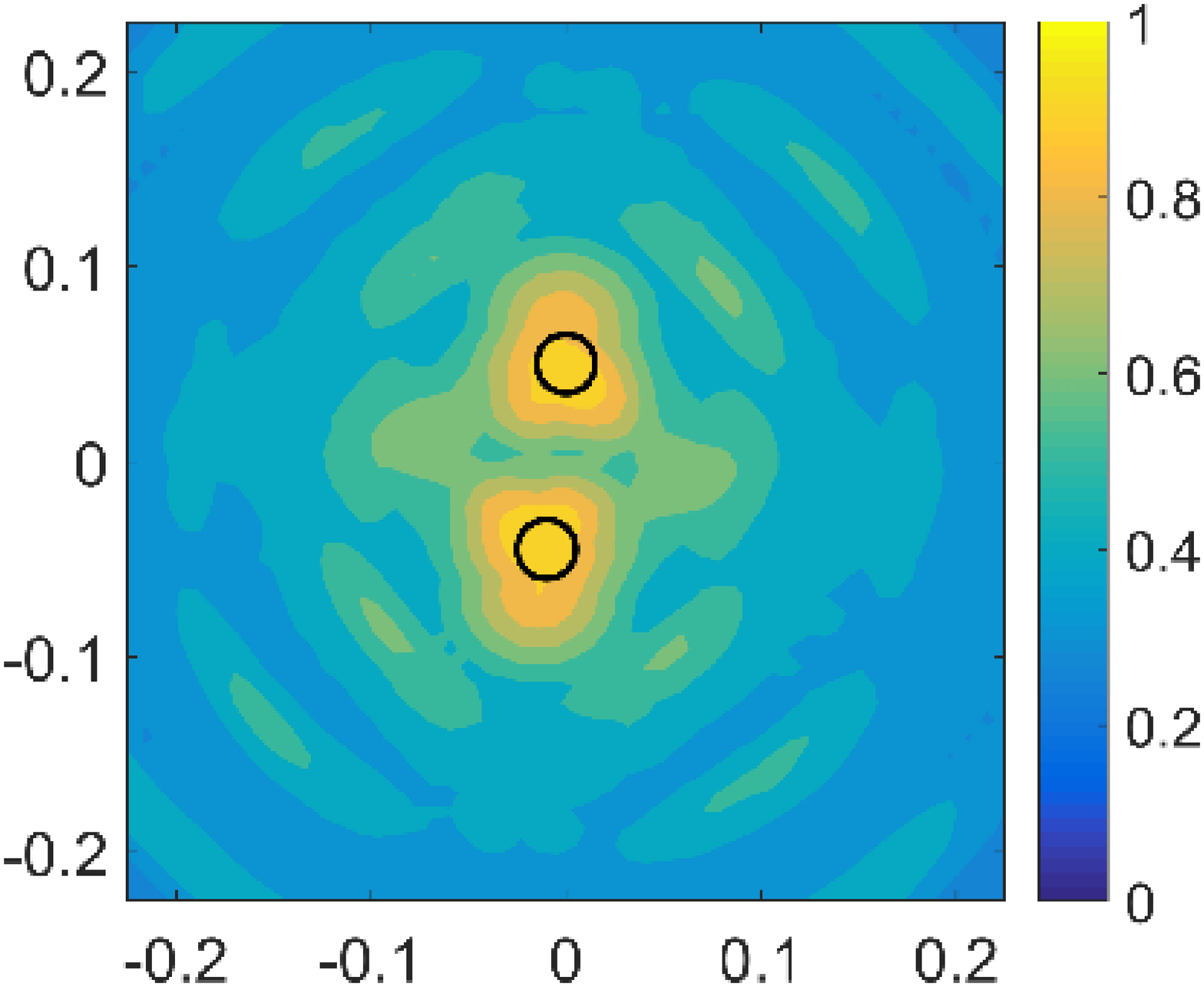}}
		\subfigure[DSMA with $L=36$]{\centering\includegraphics[width=0.32\textwidth]{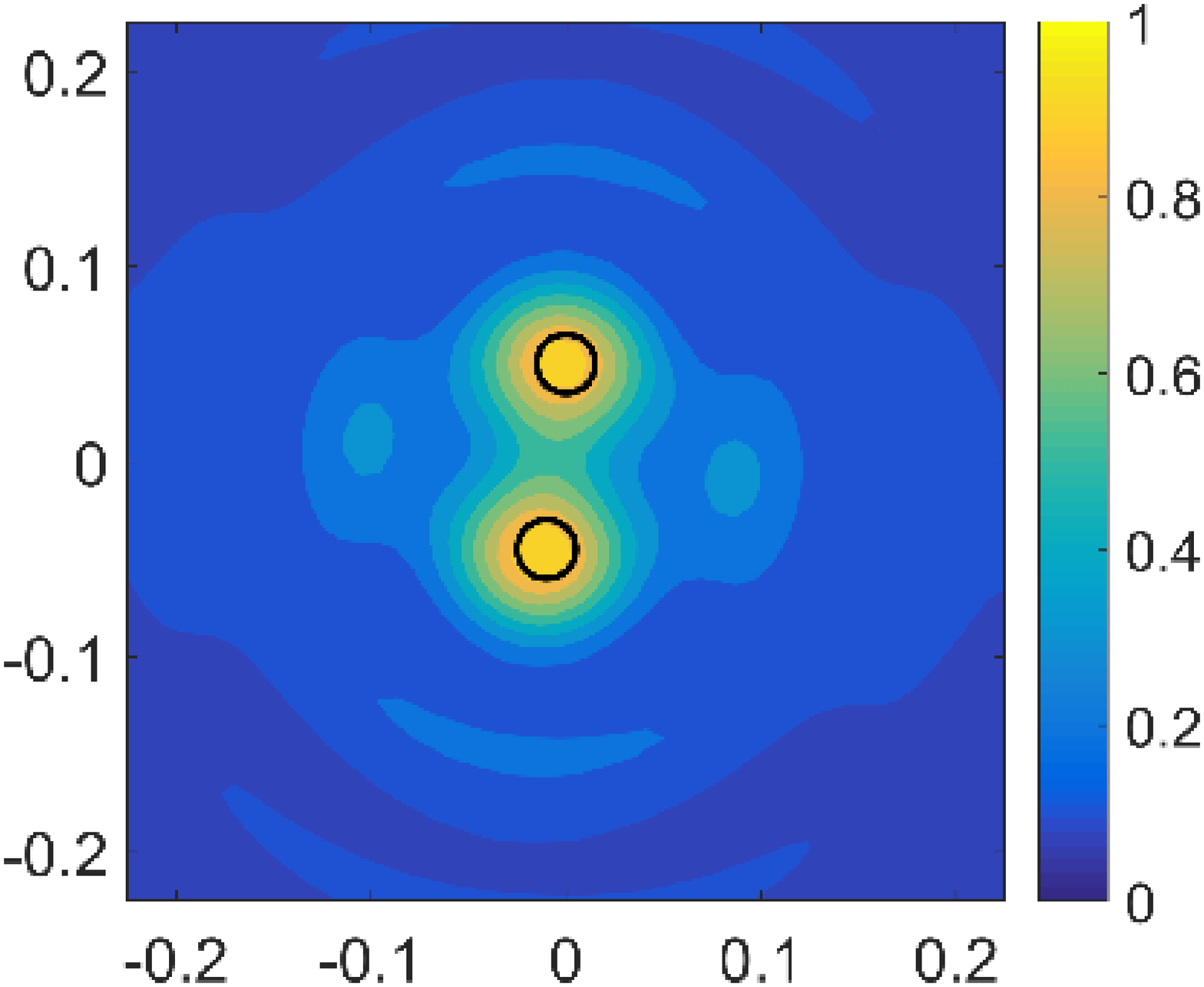}}
		\subfigure[Jaccard Index with $L=36$]{\centering\includegraphics[width=0.32\textwidth]{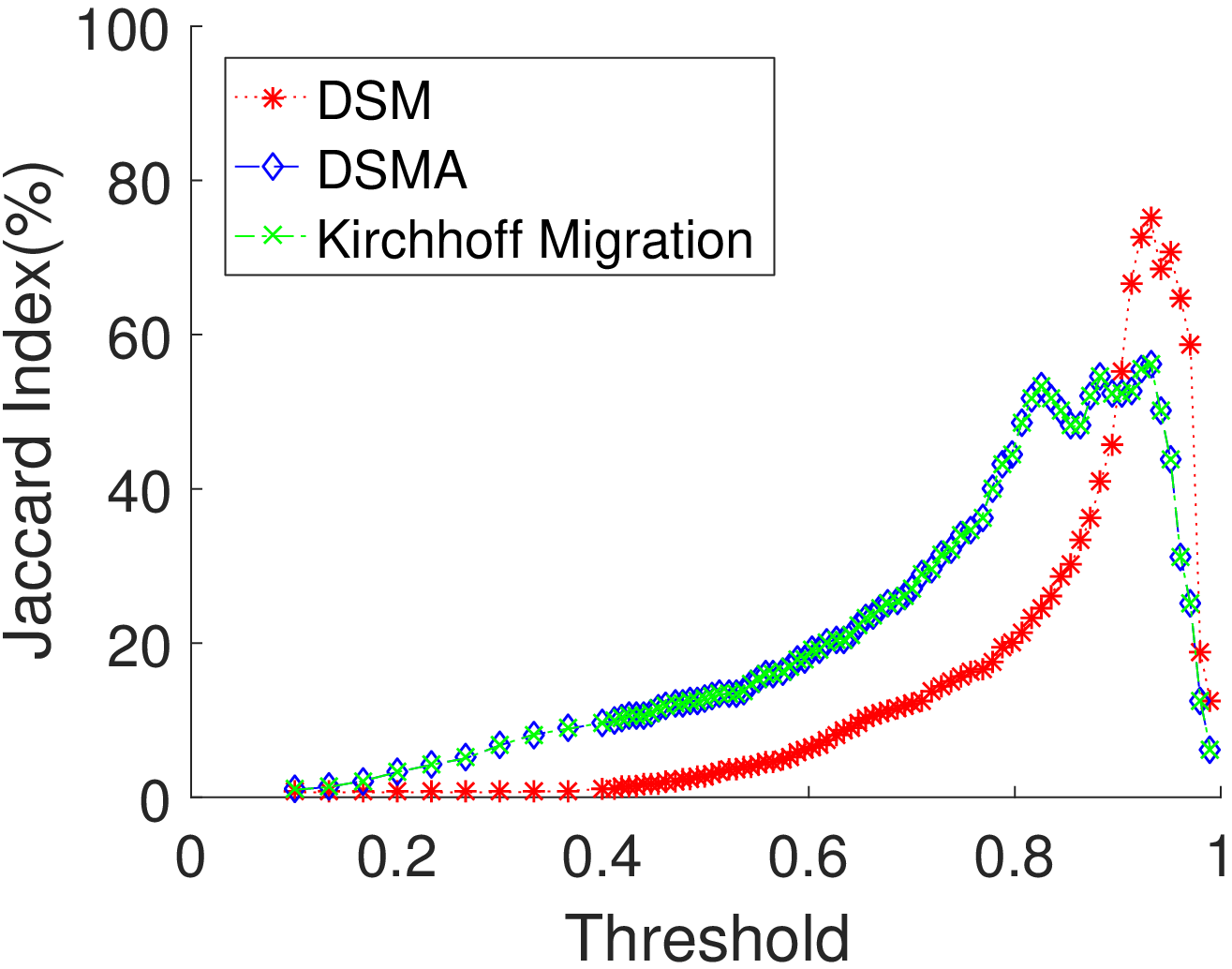}}
		\caption{\label{FresnelTwoCylinder2GHz} Map of $\mathcal{I}_{\mathrm{DSM}}(\fz)$ (left column) $\mathcal{I}_{\mathrm{DSMA}}(\fz)$ (center column), and Jaccard index (right column) using the experimental data at \SI{2}{\GHz}.}%
\end{figure}

\section{Conclusion}\label{sec:7}
In this contribution, the direct sampling method (DSM) is analyzed in the case of small obstacles thanks to the asymptotic formula of the scattered field. Some drawbacks of the classical DSM are exhibited and a alternative DSM which improved the performance of the former  in the case of multiple transmitters is proposed. Once the DSMA indicator function has been derived a strong connection between Kirchhoff migration and traditional and alternative DSM has been identified. Numerical simulations under various conditions are provided  to support our theoretical results either with synthetic or experimental data.

It would be interesting to investigate the mathematical structure and the various properties of the DSM indicator function in a limited-view configuration. Finally, we expect that the result in this contribution could be extended to three-dimensional inverse scattering problems.

\section*{Acknowledgement}
The authors are very grateful to Dominique Lesselier for his valuable advice. Part of this work was done while W.-K. Park was visiting G{\'e}nie {\'e}lectrique et {\'e}lectronique de Paris (GeePs), CentraleSup{\'e}lec, Universit{\'e} Paris-Sud. W.-K. Park was supported by the Basic Science Research Program through the National Research Foundation of Korea (NRF) funded by the Ministry of Education (grant no. NRF-2017R1D1A1A09000547).

\section*{References}
\bibliographystyle{unsrt}
\bibliography{Direct.sampling.method.for.imaging.small.dielectric.inhomogeneities.analysis.and.improvement-arxiv}

\begin{thebibliography}{10}

\bibitem{FFK}
M.~Q. Feng, F.~De Flaviis, and Y.~J. Kim.
\newblock Use of microwaves for damage detection of fiber reinforced
  polymer-wrapped concrete structures.
\newblock {\em J. Eng. Mech.}, 128:172--183, 2002.

\bibitem{PGBM}
M.-A. Ploix, V.~Garnier, D.~Breysse, and J.~Moysan.
\newblock {NDE} data fusion to improve the evaluation of concrete structures.
\newblock {\em NDT \& E Int.}, 44:442--448, 2011.

\bibitem{VS}
C.~V{\"o}lker and P.~Shokouhi.
\newblock Multi sensor data fusion approach for automatic honeycomb detection
  in concrete.
\newblock {\em NDT \& E Int.}, 71:54--60, 2015.

\bibitem{B6}
W.~J. Baranoski.
\newblock Through-wall imaging: Historical perspective and future directions.
\newblock {\em J. Franklin Inst.}, 345:556--569, 2008.

\bibitem{C3}
X.~Chen.
\newblock Subspace-based optimization method for inverse scattering problems
  with an inhomogeneous background medium.
\newblock {\em Inverse Prob.}, 26:074007, 2010.

\bibitem{KKN}
D.~Kim, B.~Kim, and S.~Nam.
\newblock A dual-band through-the-wall imaging radar receiver using a
  reconfigurable high-pass filter.
\newblock {\em J. Electromagn. Eng. Sci.}, 16:164--168, 2016.

\bibitem{AD}
S.~Amstutz and N.~Dominguez.
\newblock Topological sensitivity analysis in the context of ultrasonic
  non-destructive testing.
\newblock {\em Eng. Anal. Bound. Elem.}, 32:936--947, 2008.

\bibitem{HLD}
T.~Henriksson, M.~Lambert, and D.~Lesselier.
\newblock Non-iterative {MUSIC}-type algorithm for eddy-current nondestructive
  evaluation of metal plates.
\newblock In {\em Electromagnetic Nondestructive Evaluation ({XIV})}, volume~35
  of {\em Studies in Applied Electromagnetics and Mechanics}, pages 22--29,
  2011.

\bibitem{NDT_review}
S.K. Verma, S.S Bhadauria, and S.~Akhtar.
\newblock Review of nondestructive testing methods for condition monitoring of
  concrete structures.
\newblock {\em J. Const. Eng}, 2013.

\bibitem{HSM1}
M.~Haynes, J.~Stang, and M.~Moghaddam.
\newblock Microwave breast imaging system prototype with integrated numerical
  characterization.
\newblock {\em J. Biomed. Imag.}, 2012:1--18, 2012.

\bibitem{IMD}
N.~Irishina, M.~Moscoso, and O.~Dorn.
\newblock Microwave imaging for early breast cancer detection using a
  shape-based strategy.
\newblock {\em IEEE Trans. Biomed. Eng.}, 56:1143--1153, 2009.

\bibitem{S2}
B.~Scholz.
\newblock Towards virtual electrical breast biopsy: space frequency {MUSIC} for
  trans-admittance data.
\newblock {\em IEEE Trans. Med. Imag.}, 21:588--595, 2002.

\bibitem{CZBN}
R.~Chandra, H.~Zhou, I.~Balasingham, and R.~M. Narayanan.
\newblock On the opportunities and challenges in microwave medical sensing and
  imaging.
\newblock {\em IEEE Trans. Biomed. Eng.}, 62:1667--1682, 2015.

\bibitem{DL}
O.~Dorn and D.~Lesselier.
\newblock Level set methods for inverse scattering.
\newblock {\em Inverse Prob.}, 22:R67--R131, 2006.

\bibitem{2d_kress_open_arc}
R.~Kress.
\newblock Inverse scattering from an open arc.
\newblock {\em Math. Methods Appl. Sci}, 18:267--293, 1995.

\bibitem{RMMP}
T.~Rub{\ae}k, P.~M. Meaney, P.~Meincke, and K.~D. Paulsen.
\newblock Nonlinear microwave imaging for breast-cancer screening using
  {G}auss--{N}ewton's method and the {CGLS} inversion algorithm.
\newblock {\em IEEE Trans. Antennas Propag.}, 55(8):2320--2331, 2007.

\bibitem{Ammari_music}
H.~Ammari, E.~Iakovleva, and D.~Lesselier.
\newblock A {MUSIC} algorithm for locating small inclusions buried in a
  half-space from the scattering amplitude at a fixed frequency.
\newblock {\em Multiscale Model. Sim.}, 3(3):597--628, 2006.

\bibitem{kirsch_music}
A.~Kirsch.
\newblock The {MUSIC}-algorithm and the factorization method in inverse
  scattering theory for inhomogeneous media.
\newblock {\em Inverse Prob.}, 18(4), 2002.

\bibitem{PL3}
W.-K. Park and D.~Lesselier.
\newblock {MUSIC}-type imaging of a thin penetrable inclusion from its
  far-field multi-static response matrix.
\newblock {\em Inverse Prob.}, 25:075002, 2009.

\bibitem{lsm2d_crack}
F.~Cakoni and D.~Colton.
\newblock The linear sampling method for cracks.
\newblock {\em Inverse Prob.}, 19:279--295, 2003.

\bibitem{Kisch_lsm2d}
A.~Kirsch and S.~Ritter.
\newblock A linear sampling method for inverse scattering from an open arc.
\newblock {\em Inverse Prob.}, 16(1), 2000.

\bibitem{why_lsm}
T.~Arens.
\newblock Why linear sampling works.
\newblock {\em Inverse Prob.}, 20(1), 2003.

\bibitem{ammari_topological}
H.~Ammari, J.~Garnier, V.~Jugnon, and H.~Kang.
\newblock Stability and resolution analysis for a topological derivative based
  imaging functional.
\newblock {\em SIAM J. Control. Optim.}, 50(1):48--76, 2012.

\bibitem{Park2}
W.-K. Park.
\newblock Multi-frequency topological derivative for approximate shape
  acquisition of curve-like thin electromagnetic inhomogeneties.
\newblock {\em J. Math. Anal. Appl}, 404:501--518, 2013.

\bibitem{topological1}
A.~Carpio and M.-L. Rap{\'u}n.
\newblock Solving inhomogeneous inverse problems by topological derivative
  methods.
\newblock {\em Inverse Prob.}, 24, 2008.

\bibitem{Ammari_mf}
H.~Ammari, J.~Garnier, H.~Kang, W.-K. Park, and K.~S{\o}lna.
\newblock Imaging schemes for perfectly conducting cracks.
\newblock {\em SIAM J. Appl. Math.}, 71:68--91, 2011.

\bibitem{kirchhoff_phases}
P.~Bardsley and F.~G. Vasquez.
\newblock Kirchhoff migration without phases.
\newblock {\em Inverse Prob.}, 32:105006, 2016.

\bibitem{Kirchhoff_bybrid}
T.~Sakamoto, T.~Sato, P.~J. Aubry, and A.~G. Yarovoy.
\newblock Ultra-wideband radar imaging using a hybrid of kirchhoff migration
  and stolt f-k migration with an inverse boundary scattering transform.
\newblock {\em IEEE Trans. Antennas Propagat.}, 63(8), 2015.

\bibitem{limited-view-it}
E.~Miller, M.~Cheney, M.~Kilmer, G.~Boverman, Li~A, and D.~Boas.
\newblock Feature-enhancing inverse methods for limited-view tomographic
  imaging problems.
\newblock {\em Subsurface Sensing Technologies and Applications},
  4(4):327--353, 2003.

\bibitem{PL1}
W.-K. Park and D.~Lesselier.
\newblock Electromagnetic {MUSIC}-type imaging of perfectly conducting,
  arc-like 386 cracks at single frequency.
\newblock {\em J. Comput. Phys}, (228):8093--8111, 2009.

\bibitem{dsm2d_ito1}
K.~Ito, B.~Jin, and J.~Zou.
\newblock A direct sampling method to an inverse medium scattering problem.
\newblock {\em Inverse Prob.}, 28(2):025003, 2012.

\bibitem{dsm2d_farfield}
J.~Li and Z.~Zou.
\newblock A direct sampling method for inverse scattering using far-field data.
\newblock {\em Inverse Probl. Imag.}, 7(3):757--775, 2013.

\bibitem{dsm3d_ito1}
K.~Ito, B.~Jin, and J.~Zou.
\newblock A direct sampling method for inverse electromagnetic medium
  scattering.
\newblock {\em Inverse Prob.}, 29(9):095018, 2013.

\bibitem{AmmariKang}
H.~Ammari and H.~Kang.
\newblock {\em Reconstruction of Small Inhomogeneities from Boundary
  Measurements}, volume 1846 of {\em Lecture Notes in Mathematics}.
\newblock Springer-Verlag, Berlin, 2004.

\bibitem{Park}
W.-K. Park.
\newblock Multi-frequency subspace migration for imaging of perfectly
  conducting, arc-like cracks in full- and limited-view inverse scattering
  problems.
\newblock {\em J. Comput. Phys.}, 283:52--80, 2015.

\bibitem{jaccard}
P.~Jaccard.
\newblock The distribution of the flora in the alpine zone.
\newblock {\em New Phytol.}, 11(2):37--50, 1912.

\bibitem{0266-5611-17-6-301}
Kamal Belkebir and Marc Saillard.
\newblock Guest editors' introduction.
\newblock {\em Inverse Problems}, 17(6):1565, 2001.

\end{thebibliography}
\end{document}